\documentclass[twoside,11pt]{amsart}

\usepackage[margin=1in]{geometry}

\usepackage{tcolorbox}
\usepackage{amssymb,color}
\usepackage[dvipsnames]{xcolor}
\usepackage{amsmath,amsfonts,amssymb,listings,bbm,amsthm,enumitem}
\usepackage[ruled,vlined]{algorithm2e}
\usepackage[colorlinks=false,allbordercolors={1 1 1}]{hyperref}
\usepackage[noabbrev,capitalize]{cleveref}
\usepackage{mathrsfs,graphicx}
\usepackage{caption,subcaption}

\DeclareMathOperator*{\argmin}{\arg\!\min}

\DeclareMathOperator{\E}{\mathbb{E}}
\DeclareMathOperator{\R}{\mathbb{R}}

\DeclareMathOperator{\tr}{tr}

\DeclareMathOperator{\inte}{int}

\DeclareMathOperator{\cov}{cov}
\DeclareMathOperator{\var}{var}
\DeclareMathOperator{\spt}{supp}
\DeclareMathOperator{\dom}{\mathrm{dom}}

\theoremstyle{plain}
\newtheorem{lemma}{Lemma}[section]
\newtheorem{theorem}[lemma]{Theorem}
\newtheorem{corollary}[lemma]{Corollary}
\newtheorem{proposition}[lemma]{Proposition}
\newtheorem{remark}[lemma]{Remark}

\theoremstyle{definition}

\usepackage{lastpage}

\newcommand\blfootnote[1]{%
  \begingroup
  \renewcommand\thefootnote{}\footnote{#1}%
  \addtocounter{footnote}{-1}%
  \endgroup
}
\usepackage[symbol]{footmisc}

\begin{document}

\title[Computation and Statistics for the Empirical MEM]{On the Computational and Statistical Efficiency of the Empirical Maximum Entropy on the Mean Method}

\author[M. King-Roskamp]{Matthew King-Roskamp\textsuperscript{\dag} }
\address[M. King-Roskamp]{
 Department of Mathematics and Statistics,
       McGill University}
\email{matthew.king-roskamp@mail.mcgill.ca}
\author[G. Rioux]{Gabriel Rioux\textsuperscript{\dag} }
\address[G. Rioux]{
Department of Mathematics, Imperial College London}
\email{g.rioux@ic.ac.uk}
\author[R. Choksi]{Rustum Choksi}
\address[R. Choksi]{
Department of Mathematics and Statistics,
       McGill University}
\email{ rustum.choksi@mcgill.ca} 
\author[T. Hoheisel]{Tim Hoheisel }
\address[T. Hoheisel]{
Department of Mathematics and Statistics,
       McGill University}
\email{ tim.hoheisel@mcgill.ca} 
\thanks{
 RC and TH acknowledge the support of NSERC through its Discovery grants program.
}
\maketitle

\begin{abstract}
The {\em Maximum Entropy on the Mean (MEM)} method provides a flexible  computational framework for solving inverse problems by combining data fidelity with entropy-based regularization. In practice, however, the prior distribution is typically unknown but can  be estimated from data, giving rise to the empirical MEM method. We establish a  parametric convergence rate of $O(n^{-1/2})$ in expectation for empirical MEM, improving upon the previously established $O(n^{-1/4})$ guarantee by \cite{mkr2024maximumentropy}. Our proof is based on a novel stability analysis of the primal and dual optimization problems under perturbations of the underlying probability measure, relying only on foundational tools from convex analysis and probability. We further show that the  MEM dual problem admits a reformulation as an expected risk minimization problem, thereby placing MEM within the modern framework of stochastic optimization and enabling scalable stochastic gradient algorithms for large-scale inverse problems. Together, these results place empirical MEM as a statistically and computationally efficient  methodology for data-driven inverse problems.
\end{abstract}

\medskip

\section{Introduction}\label{sec:Intro}
\blfootnote{\textsuperscript{\dag} Denotes equal contribution to this work.}
Linear inverse problems of the form
\[
Ax \approx b
\]
are fundamental in statistics, signal processing, machine learning (in particular imaging). Such problems are frequently ill-conditioned or underdetermined, and the observation may additionally be corrupted by noise. Stable recovery therefore requires regularization, often via the incorporation of prior information about the unknown signal.

The {\em Maximum Entropy on the Mean (MEM) method} provides a convex and information-theoretic framework for incorporating such prior information; see, e.g., \cite{le1999new,rioux2021blind,vaisbourd2022maximum}. Given a prior probability measure $\mu \in \mathcal P(\mathcal X)$ supported on a compact set $\mathcal X \subset \R^ {m}$, MEM associates with $\mu$ the (convex) function
\[
    \kappa_{\mu}(x)
    :=
    \inf\left\{
        \mathrm{KL}(Q \Vert \mu)
        :
        Q \in \mathcal P(\mathcal X),\
        \E_{Q}[X]=x
    \right\},
\]
which we call the {\em MEM function}, see, e.g., \cite{vaisbourd2022maximum} for an extensive study of this object, and \Cref{sec:MEMPrelim} for more details.
The corresponding reconstruction  is obtained by solving
\begin{equation}
    x_{\mu}
    \in
    \argmin_{x \in \R^{m}}
    \left\{
        \alpha g(Ax-b)+\kappa_{\mu}(x)
    \right\},\tag{$P_\mu$}
    \label{eqn:introMEMPrimal}
\end{equation}
where $g$ is a proper,  lower semicontinuous, and  convex data-fidelity function and $\alpha>0$ balances data fidelity against prior information/regularization. This is the {\em MEM method} in a nutshell, see \cref{sec:Prob,sec:MEMPrelim} for more details. 
 
Emerging from ideas of E.T. Jaynes in 1957
\cite{jaynes1957information1,jaynes1957information2}, 
various forms and interpretations of MEM (see 
\cite{brown1986fundamentals,gamboa1989methode,marechal1997unification,dacunha1990maximum, le1999new}) 
have appeared in the literature.  Applications  have occurred in different disciplines such as earth sciences \cite{fermin2006bayesian,rietsch1976maximum,urban1996retrieval}, crystallography \cite{Navaza:a24265,navaza1986use}, and medical imaging \cite{amblard2004biomagnetic,cai2022diffuse,chowdhury2013meg,grova2006evaluation,heers2016localization}.
Recently, the MEM method has been shown to be a powerful tool for blind deblurring of images that possess some form of symbology (for example, {UPC and} QR barcodes)  \cite{rioux2021blind,rioux2020maximum}. 
However, MEM methods are not widely used and have yet to become a modern tool for solving contemporary data-driven inverse problems in image processing and machine learning.

The key to approaching the MEM problem theoretically and computationally is Fenchel-Rockafellar duality \cite{rockafellar1970convex}. The (Fenchel) dual of the (primal) MEM problem \eqref{eqn:introMEMPrimal} reads: 
\begin{equation}\label{eqn:introMEMDual}
    z_{\mu} \in \argmin_{z \in \R^{d}} \Big\{\alpha g^{*}(-z/\alpha)-\langle b,z\rangle + L_{\mu}(A^{\top}z)\Big\}, \tag{$D_\mu$}
\end{equation}
where $
L_{\mu}(y):=  \log \int_{\mathcal{X}} \exp\langle y, \cdot \rangle d\mu$
is the log-moment generating function of the prior $\mu$. The advantages of appealing to the dual problem \eqref{eqn:introMEMDual} instead of the primal MEM problem \eqref{eqn:introMEMPrimal} are as follows: 
 The log-moment generating function has closed form for many priors $\mu$ and is essentially smooth (in the sense of \cite{rockafellar1970convex}) thus enabling expedient primal-dual recovery
\begin{equation}
    x_{\mu} = \nabla L_{\mu}(A^{\top}z_{\mu}). \label{eqn:introprimalDualRecoveryFormula}
\end{equation} 
In many applications, however, the prior distribution $\mu$ is unknown. What is available instead is a collection of independent training samples from $\mu$
\[
    X_{1},\ldots,X_{n} \stackrel{\mathrm{i.i.d.}}{\sim} \mu.
\]
A natural data-driven construction replaces $\mu$ by an empirical measure 
\begin{equation}
    \hat \mu_{n}
    :=
    \frac{1}{n}\sum_{i=1}^{n}\delta_{X_{i}}.
    \label{eqn:introEmpiricalMeasure}
\end{equation}
The MEM method based on this measure, which we call the {\em empirical MEM method},  was analyzed in \cite{mkr2024maximumentropy}. That work established almost sure convergence of empirical MEM solutions $x_{\hat \mu_{n}}$ to $x_\mu$, including suitable approximate solutions, and obtained finite-sample convergence guarantees in expectation. To wit, an $n^{-1/4}$ sample complexity with the quadratic data-fidelity term, $g=\frac12\|\cdot\|^2$, was derived, but it was conjectured that this rate was not sharp and that the parametric rate $n^{-1/2}$ should hold.

The first objective of the present paper is to resolve this question: We prove that the empirical MEM estimator converges in expectation at the parametric rate
\begin{equation}\label{eqn:introPrimalRate}
    \E\left[\left\|x_{\hat \mu_{n}}-x_{\mu}\right\|\right]    =
    O\left(n^{-1/2}\right)
\end{equation}
for all convex  and  $L$-smooth fidelity terms (see Assumption \eqref{eq:ass} below). Similar to the analysis in \cite{mkr2024maximumentropy}, we heavily rely on Fenchel-Rockafellar duality: First, we establish stability results for solutions of the (Fenchel-Rockafellar) dual MEM problem \eqref{eqn:introMEMDual} under perturbations of the underlying probability measure (\Cref{thm:parametricRatesDual}). Using this, we then prove the $n^{-1/2}$-rate of convergence of empirical dual solutions in expectation. Moreover, by establishing that  the primal-dual recovery \eqref{eqn:introprimalDualRecoveryFormula} is  Lipschitz-stable (\Cref{lem:LipshitzConstant}), we obtain stability of primal solutions with respect to the underlying measure (\Cref{prop:primalStability}) and, ultimately, the desired convergence from \eqref{eqn:introPrimalRate} (\Cref{thm:parametricRatesPrimalSoln}).
Moreover, we show that, for an $\varepsilon$-optimal empirical dual solution  $z_{\hat \mu_{n},\varepsilon}$ and its associated primal reconstruction $x_{\hat \mu_{n},\varepsilon}:=\nabla L_{\hat \mu_{n}}(A^\top z_{\hat \mu_{n},\varepsilon})$,  the bound 
\begin{equation}
    \E\left[\left\|x_{\hat \mu_{n},\varepsilon}-x_{\mu}\right\|\right]
    \leq
    C_{1}n^{-1/2}+C_{2}\sqrt{\varepsilon},
    \label{eqn:introStatCompTradeoff}
\end{equation}
holds for constants independent of $n$ and $\varepsilon$ (\Cref{th:EpsSol}).

The second objective of the paper is computational: The empirical prior in \eqref{eqn:introEmpiricalMeasure} is attractive precisely when a large training set is available, but the standard empirical MEM dual involves the full sample at every evaluation. Consequently, deterministic full-batch methods become increasingly costly as $n$ grows. To address this limitation, we show that the empirical dual can be reformulated as a risk minimization problem. Concretely,   we show that with 
$
\ell(x,z)=\exp\left(\alpha g^{\ast}(-z/\alpha)-\langle b,z\rangle+\langle A^{\top}z,x\rangle \right),
$
the MEM dual problem \eqref{eqn:introMEMDual} has the same solution(s) as 
\begin{equation}
    \min_{z \in \R^{d}}
 \E_{X\sim\mu}[\ell (X,z)]
    \label{eqn:introPopulationRisk}
\end{equation}
Consequently, the empirical MEM dual can be cast as 
\begin{equation}
    \min_{z \in \R^{d}}
 \frac{1}{n}\sum_{i=1}^{n}\ell(X_{i},z).
    \label{eqn:introEmpiricalRisk}
\end{equation}
This reformulation places empirical MEM within the standard framework of stochastic optimization. It permits stochastic-gradient updates based on individual samples or mini-batches and removes the need to process the entire prior dataset at every iteration.

The exponential terms inherited from the moment generating function create a difficulty that is specific to this formulation: stochastic gradients need not be uniformly bounded and may become numerically unstable when the iterates move into unfavorable regions. Motivated by stochastic gradient clipping methods, including the analysis of \cite{mai2021stability}, we study a  projected clipped stochastic-gradient scheme adapted to the empirical MEM objective.  In this extended real-valued setting, we refine the previous treatment to explicitly account for domain considerations on $\ell$. We establish almost sure convergence under explicit assumptions and investigate the method numerically on inverse problems of increasing scale. The experiments demonstrate that the stochastic formulation can use substantially larger empirical priors than the original full-batch formulation while retaining competitive reconstruction quality.

The statistical and computational contributions are complementary. The statistical analysis controls the error introduced by replacing the population prior $\mu$ with the empirical prior $\hat \mu_{n}$. The optimization analysis controls the additional error introduced by solving the empirical problem only approximately, cf. \eqref{eqn:introStatCompTradeoff}. 
This decomposition separates the statistical error from the optimization error and gives a natural stopping principle: solving the empirical problem far beyond the accuracy permitted by the sample size provides little statistical benefit.

To summarize, our main contributions are the following:
\begin{enumerate}[label=(\roman*)]
    \item We prove nonasymptotic stability bounds for the dual and primal MEM solutions under perturbations of the prior measure.

    \item We establish the parametric convergence rate $O(n^{-1/2})$ in expectation for empirical dual and primal solutions, improving the previously known $O(n^{-1/4})$ rate from \cite{mkr2024maximumentropy}.

    \item We extend the statistical analysis to approximate empirical solutions and obtain an explicit decomposition of statistical and optimization errors.

    \item We reformulate the empirical MEM dual as an empirical risk minimization problem, thereby enabling stochastic optimization methods whose per-iteration cost does not scale with the full sample size.

    \item We develop and analyze a  projected clipped stochastic-(sub)gradient method for the reformulated problem and demonstrate its performance on large-scale image reconstruction examples.
\end{enumerate}
\medskip

\noindent
The paper is organized as follows: In  \Cref{sec:Prelim} we  collect the required material from convex analysis and probability theory,  and provide some more details on the MEM method. \Cref{sec:parametricRates} develops the stability theory and proves the parametric convergence rates for exact and approximate empirical solutions.
\Cref{sec:ER} derives the expected-risk reformulation and studies stochastic-subgradient methods for the empirical dual.  \Cref{sec:Numerics} presents the numerical experiments.

\bigskip

\noindent
{\em Notation:} Throughout we equip $\R^m$ with the Euclidean norm $\|\cdot\|:=\|\cdot\|_2$. The (Euclidean) distance of a point $x\in \R^m$  to a set $C\in \R^m$ is denoted by ${\rm dist}(x,C)$. 
The closed ball of radius $r$ (in the Euclidean norm) centered at $0$ is denoted by $B_r$.  For a matrix $A \in \R^{d \times m}$ we denote $\Vert A \Vert$ as its operator $2$-norm (i.e., its largest singular value), and $\sigma_{\min}(A) = \sqrt{\lambda_{\min}(A^{\top}A)}$ as the smallest singular value of $A$. 
For any  $C\subset \R^{d}$ set $\vert C \vert_{\infty} := \sup_{c \in C} \Vert c \Vert$.

\section{Preliminaries}\label{sec:Prelim}

In this section we review   some concepts from convex analysis and probability theory that are essential for our study. We then recall the MEM problem briefly introduced above with some more technical details. 

\subsection{Fundamentals from convex analysis}
\label{sec:convexAnalysis}
Throughout, we work with extended real-valued functions $f:\R^{m} \to \overline{\R} = \R \cup \{ \pm \infty \}$. The {\em domain} of a function $f$ is the set $\dom(f) = \{ x \in \R^{m} \: : f(x)  < + \infty \}$. If $\dom(f) \neq \emptyset$ and $f$ is never $- \infty,$ $f$ is called  {\em proper.} A function $f$ is said to be {\em lower semicontinuous (lsc)} 
 if the preimage $f^{-1}([-\infty, a])$ is a closed (possibly empty) set for all $a \in \R$. 
We say that a proper function $f$ is {\em convex} if 
\[
f(\lambda x + (1-\lambda) y) \leq \lambda f(x) + (1-\lambda) f(y)
\]
for every $x,y \in \text{dom}(f)$ and all $\lambda \in (0,1)$. 

For  $f : \R^{m} \to \overline{\R}$, its {\em subdifferential} at $\overline x$ is the collection of all {\em subgradients}  of $f$ at $\overline x$, given by
\begin{equation}
    \partial f(\overline{x}) := \{ v\in\mathbb R^m \: : \: f(x) \geq f(\overline{x} )+\langle v, x- \overline{x} \rangle,\: \forall x \in \R^{m} \}, \label{eqn:subgradientDefinition}
\end{equation}
which reduces to singleton $\{\nabla f(\bar x)\}$ when $f$ is convex and  differentiable at $\overline{x}$. We defer to  \cite{rockafellar1970convex} for more details on subdifferential calculus.  

The {\em (Fenchel) conjugate} of $f:\R^{m} \to \overline{\R}$ is the function $f^{*} : \R^{m} \to \overline{\R}$ defined as 
\[
f^{*}(x^{*}) := \sup_{x \in \R^{m}} \{ \langle x, x^{*} \rangle - f(x) \}. 
\]
An important notion for our study is {\em strong convexity}. %
The following result records useful properties of strongly convex functions %
from standard reference texts, see, e.g., \cite{rockafellar1998variational}. 

\begin{proposition}[Strong convexity]
\label{prop:strongConvexity} Let $f:\R^m \to \overline{\R}$ be lsc proper convex and let $\sigma>0$. The following hold: 

\begin{itemize}
\item[i)] (Characterizations) The following are equivalent:
\begin{itemize}
\item[1.] $f$ is $\sigma$-strongly convex.
\item[2.]  $f^*$ is {\em $1/\sigma$-smooth}, i.e.,  $\nabla f^*$ is (globally) $1/\sigma$-Lipschitz.
\item[3.]  $\partial f$ is {\em strongly monotone}, i.e., for all $x_1,x_2\in\R^m$: 
  \begin{equation}
        \langle v_{1}-v_{2}, x_{1} - x_{2} \rangle \geq \sigma \Vert x_{1} - x_{2}\Vert^{2}, \qquad \forall v_{1} \in \partial f(x_{1}), v_{2} \in \partial f(x_{2}). \label{eqn:strongConvexityMonotoneSubdiff}
    \end{equation}
    \item[4.] For all $x_{1},x_{2} \in \R^{m}$: 
\begin{equation}
    f(x_{2})-f(x_{1}) \geq \langle v,x_{2}-x_{1}\rangle + \frac{\sigma}{2} \Vert x_{2}-x_{1} \Vert^{2},\quad\forall v \in \partial f(x_{1}). \label{eqn:strongConvexVer2}
\end{equation} 
\end{itemize}

\item[ii)](Unique minimizers) If $f$ is $\sigma$-strongly convex it has  a unique minimizer $\bar x$ that satisfies
\[
f(x)-f(\bar x)\geq \frac{\sigma}{2}\|x-\bar x\|^2,\quad \forall x\in \R^m.
\]
\item[iii)] When $f$ is differentiable and $\sigma$-strongly convex, we have 
\begin{equation*}
     \sigma \Vert x_{1} - x_{2}\Vert \leq \Vert \nabla f(x_{1})- \nabla f(x_{2}) \Vert\quad \forall x_1,x_2\in \R^m.
\end{equation*}
\end{itemize}
\end{proposition}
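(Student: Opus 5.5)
The plan is to take as the working definition of $\sigma$-strong convexity that $f-\frac{\sigma}{2}\|\cdot\|^2$ is convex, and to prove the equivalences in i) cyclically, $1\Rightarrow4\Rightarrow3\Rightarrow1$, with $1\Leftrightarrow2$ handled separately via conjugacy. Parts ii) and iii) will then be quick consequences of i).

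\textbf{Cycle $1\Rightarrow4\Rightarrow3\Rightarrow1$.} For $1\Rightarrow4$, set $h:=f-\frac{\sigma}{2}\|\cdot\|^2$, which is convex. Subdifferential calculus (the quadratic is smooth) gives $\partial f(x_1)=\partial h(x_1)+\sigma x_1$. For $v\in\partial f(x_1)$, apply the subgradient inequality of $h$ at $x_1$ with subgradient $v-\sigma x_1$ and expand $\frac{\sigma}{2}\|x_2\|^2-\frac{\sigma}{2}\|x_1\|^2-\sigma\langle x_1,x_2-x_1\rangle=\frac{\sigma}{2}\|x_2-x_1\|^2$; this is \eqref{eqn:strongConvexVer2}. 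For $4\Rightarrow3$, write \eqref{eqn:strongConvexVer2} twice with the roles of $x_1,x_2$ swapped and add. For $3\Rightarrow1$, strong monotonicity of $\partial f$ says exactly that $\partial h=\partial f-\sigma I$ is monotone. Since $h$ is lsc proper, I will show $h$ is convex using the characterization of convex lsc functions via monotone subdifferentials (Rockafellar–Wets, Thm.~12.17). This is the most delicate step, because it requires a nonsmooth subdifferential setting, and I would simply cite it.

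\textbf{$1\Leftrightarrow2$.} For $1\Rightarrow2$, the function $\langle\cdot,y\rangle-f$ is $\sigma$-strongly concave and lsc proper, hence coercive, so it has a unique maximizer $x(y)$. Thus $\partial f^*(y)=\{x(y)\}$, so $f^*$ is finite and differentiable everywhere. Take $y_i\in\partial f(x_i)$, which is equivalent to $x_i=\nabla f^*(y_i)$. Strong monotonicity together with Cauchy–Schwarz gives $\sigma\|x_1-x_2\|^2\le\|y_1-y_2\|\|x_1-x_2\|$, and hence the Lipschitz bound $1/\sigma$. For $2\Rightarrow1$, I use the Baillon–Haddad / descent-lemma argument. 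Smoothness of the convex function $f^*$ yields cocoercivity $\langle\nabla f^*(y_1)-\nabla f^*(y_2),y_1-y_2\rangle\ge\sigma\|\nabla f^*(y_1)-\nabla f^*(y_2)\|^2$. Translating through $\partial f=(\nabla f^*)^{-1}$, which uses $f=f^{**}$ for lsc proper convex $f$, this becomes statement 3. The cocoercivity step is the main technical obstacle; I would prove it by applying the descent lemma to $\phi(y)=f^*(y)-\langle\nabla f^*(y_1),y\rangle$, which is minimized at $y_1$.

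\textbf{Parts ii) and iii).} For ii), $f$ is strongly convex and hence coercive: this follows from 4 at any $x_1\in\dom\partial f$, which is nonempty because $\partial f$ is nonempty on the relative interior of the domain. Together with lsc, this gives existence of a minimizer $\bar x$. Since $0\in\partial f(\bar x)$, statement 4 gives the quadratic growth bound, which also forces uniqueness. For iii), apply 3 with $v_i=\nabla f(x_i)$ and then Cauchy–Schwarz.
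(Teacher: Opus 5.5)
Your proof is correct. The paper gives no proof of this proposition; it records these facts as standard and refers to Rockafellar--Wets. Your proposal is therefore a genuinely different route, namely an explicit derivation.

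What your route buys is a clear account of what is actually needed. Apart from $3\Rightarrow 1$, everything is basic convex analysis:
\begin{itemize}
\item the sum rule with a smooth quadratic;
\item $\partial f^*=(\partial f)^{-1}$ when $f=f^{**}$;
\item a single-valued subdifferential of a finite convex function implies differentiability;
\item nonemptiness of $\partial f$ on $\mathrm{ri}(\mathrm{dom} f)$;
\item the descent lemma, for cocoercivity.
\end{itemize}
The step $3\Rightarrow 1$ is the only place you leave the convex setting. There you need the limiting subdifferential and the monotonicity characterization of convexity for lsc functions. That use is legitimate: the limiting subdifferential of $f-\frac{\sigma}{2}\|\cdot\|^2$ is $\partial f-\sigma I$, because $f$ is convex and the quadratic is $C^1$. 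Note that your $2\Rightarrow 1$ also passes through this step.

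If you prefer to stay entirely within convex analysis, you can close the loop as $1\Rightarrow 4\Rightarrow 3\Rightarrow 2\Rightarrow 1$ instead.
\begin{itemize}
\item For $3\Rightarrow 2$: $\partial f$ is maximal monotone, so the monotone map $\partial f-\sigma I$ is maximal monotone too. Hence $\partial f$ is onto by Minty's theorem, and $(\partial f)^{-1}=\partial f^*$ is everywhere single-valued and $1/\sigma$-Lipschitz.
\item For $2\Rightarrow 1$: the descent lemma gives $f^*=\inf_{y_0}u_{y_0}$ with $u_{y_0}(y)=f^*(y_0)+\langle\nabla f^*(y_0),y-y_0\rangle+\frac{1}{2\sigma}\|y-y_0\|^2$. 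Then $f=f^{**}=\sup_{y_0}u_{y_0}^*$, and each $u_{y_0}^*-\frac{\sigma}{2}\|\cdot\|^2$ is affine, so $f-\frac{\sigma}{2}\|\cdot\|^2$ is convex.
\end{itemize}

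Two cosmetic points in $1\Rightarrow 2$:
\begin{itemize}
\item It is $f-\langle\cdot,y\rangle$ that is lsc, proper and $\sigma$-strongly convex; the concave function $\langle\cdot,y\rangle-f$ is usc.
\item Its coercivity needs the argument you only give later in ii), via statement 4 at a point of $\mathrm{dom}\,\partial f$. State that argument first.
\end{itemize}
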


\noindent
The central convex-analytic tool for our study is Fenchel-Rockafellar duality which we briefly recall here, see, e.g., \cite{rockafellar1998variational} for more details.

 \begin{theorem} [Fenchel-Rockafellar duality]\label{th:FenRock}
  Let $\kappa:\R^\ell\to\overline \R$, $\phi:\R^k\to\overline \R$ be proper, lsc, and convex and $L\in \R^{k\times \ell}$. 
  Define  
 \begin{equation}\label{eq:FRP}
\min_{x\in \R^\ell} \phi(Lx)+\kappa(x)\tag{P}
 \end{equation}
 and
 \begin{equation}\label{eq:FRD}
 \min_{y\in \R^k} \kappa^*(L^{\top}y)+\phi^*(-y)\tag{D}.
 \end{equation}
Set
\[
p:= \inf_{x\in \R^\ell}\{\phi(Lx)+\kappa(x)\} \quad {\rm and}\quad d:=-\inf_{y\in \R^k}\{\kappa^*(L^{\top}y)+\phi^*(-y)\},
\]
and assume  the qualification condition 

\[
0\in {\rm int}\left(L(\dom (\kappa))-\dom (\phi)\right).
\]
Then the following hold:
\begin{itemize}
\item[i)] (Strong duality) $p=d$, and the dual problem has a solution.
\item[ii)] (Primal-dual recovery)
If $\bar y$ solves the dual problem \eqref{eq:FRD} and $\kappa^*$ is differentiable (at $L^{\top}\bar y)$, then $\bar x= \nabla \kappa^*(L^{\top}\bar y)$ (uniquely) solves the primal problem \eqref{eq:FRP}. 
\end{itemize}
\end{theorem}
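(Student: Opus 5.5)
The plan is the classical perturbation argument: embed the primal problem \eqref{eq:FRP} into the family of perturbed problems
\[
v(u):=\inf_{x\in\R^\ell}\{\phi(Lx+u)+\kappa(x)\},\qquad u\in\R^k,
\]
so that $p=v(0)$, and read off both claims from the behaviour of $v$ at $0$. The function $v$ is convex, as the infimal projection of the jointly convex function $(x,u)\mapsto \phi(Lx+u)+\kappa(x)$. Its domain is $\dom(v)=\dom(\phi)-L(\dom(\kappa))$. The qualification condition $0\in{\rm int}(L(\dom(\kappa))-\dom(\phi))$ is equivalent, by negation, to $0\in{\rm int}\,\dom(v)$.

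\textbf{Step 1: conjugate of $v$.} A direct computation with the substitution $w=Lx+u$ gives
\[
v^*(y)=\sup_{x,u}\{\langle y,u\rangle-\phi(Lx+u)-\kappa(x)\}=\phi^*(y)+\kappa^*(-L^\top y).
\]
Hence $-v^{**}(0)=\inf_y\{\phi^*(y)+\kappa^*(-L^\top y)\}$, and the change of variables $y\mapsto -y$ turns this into the dual objective in \eqref{eq:FRD}. Consequently $d=v^{**}(0)\le v(0)=p$, which is weak duality.

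\textbf{Step 2: strong duality and dual attainment.} This is the main obstacle. First consider the case $p=-\infty$: then $v(0)=-\infty$, and a convex function that is $-\infty$ at an interior point of its domain is $-\infty$ on the whole interior. Its conjugate is then $+\infty$ everywhere, so $d=-\infty$, and the dual infimum $+\infty$ is trivially attained. Otherwise $p$ is finite, because $0\in\dom(v)$. A convex function that is finite at an interior point of its domain is then finite on a neighbourhood of $0$, hence proper, and it is continuous at $0$ (Rockafellar, Thm.~10.1). Therefore $\partial v(0)\neq\emptyset$. Take $\bar u^*\in\partial v(0)$. The Fenchel--Young equality $v(0)+v^*(\bar u^*)=0$ gives
\[
p=-v^*(\bar u^*)\le \sup_y\{-v^*(y)\}=d\le p.
\]
So $p=d$, and the supremum is attained at $\bar u^*$; this yields the dual solution $\bar y=-\bar u^*$. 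The key technical point is that nonemptiness of the subdifferential relies on the interiority condition, not just on $0\in\dom(v)$.

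\textbf{Step 3: primal recovery.} Let $\bar y$ solve \eqref{eq:FRD}, and assume $\kappa^*$ is differentiable at $L^\top\bar y$. Set $\bar x=\nabla\kappa^*(L^\top\bar y)$. Since $\kappa$ is proper, lsc and convex, we have $\kappa^{**}=\kappa$. The subgradient inversion rule then gives $L^\top\bar y\in\partial\kappa(\bar x)$ and $\kappa(\bar x)+\kappa^*(L^\top\bar y)=\langle \bar x,L^\top\bar y\rangle$.

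From Step 2, $p=d$ is finite; the degenerate case $p=-\infty$ is excluded by properness once a dual solution with finite value exists. Optimality of $\bar y$ in the dual, together with the sum rule, gives $0\in L\,\partial\kappa^*(L^\top\bar y)-\partial\phi^*(-\bar y)$. The sum rule is valid here because of the same qualification condition, applied in dual form, or alternatively by using Step 2 directly. Since $\partial\kappa^*(L^\top\bar y)=\{\bar x\}$, this reads $L\bar x\in\partial\phi^*(-\bar y)$, i.e. $-\bar y\in\partial\phi(L\bar x)$. Combining the two Fenchel--Young equalities gives
\[
\phi(L\bar x)+\kappa(\bar x)=-\phi^*(-\bar y)-\kappa^*(L^\top\bar y)=d=p,
\]
so $\bar x$ solves \eqref{eq:FRP}.

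\textbf{Step 4: uniqueness.} Any primal solution $x'$ satisfies the extremality relations with the same $\bar y$: $p=d$ forces equality in both Fenchel--Young inequalities. In particular $L^\top\bar y\in\partial\kappa(x')$, which is equivalent to $x'\in\partial\kappa^*(L^\top\bar y)=\{\bar x\}$. Hence $x'=\bar x$, and the primal solution is unique.
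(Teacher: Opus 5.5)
The paper does not prove this theorem; it quotes it from Rockafellar--Wets. Your perturbation argument is therefore a self-contained replacement, and Steps~1, 2 and~4 are correct. The one step whose justification fails is the sum rule in Step~3.

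The condition $0\in\mathrm{int}(L(\dom(\kappa))-\dom(\phi))$ constrains only the primal domains. It yields no constraint qualification for the dual objective $\kappa^*\circ L^\top+\phi^*(-\cdot)$, and Step~2, which only gives $\partial v(0)\neq\emptyset$, does not supply one either. A one-dimensional example shows this. Take $L=1$, $\kappa\equiv 0$ and $\phi=\exp$. The primal condition holds, $\kappa^*=\delta_{\{0\}}$, $\phi^*(s)=s\log s-s$ on $[0,\infty)$, and $\bar y=0$ is the dual solution. Yet $\partial\phi^*(0)=\emptyset$, so $\partial\kappa^*(0)-\partial\phi^*(0)=\emptyset$, while the dual objective equals $\delta_{\{0\}}$ and has subdifferential $\R$ at $0$. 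Consistently, the primal value $\inf e^x$ is not attained. So the inclusion $0\in L\,\partial\kappa^*(L^\top\bar y)-\partial\phi^*(-\bar y)$ does not follow ``from the same qualification condition in dual form'' or ``from Step~2''.

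What actually makes Step~3 work is the differentiability hypothesis. Differentiability of $\kappa^*$ at $L^\top\bar y$ forces $L^\top\bar y\in\mathrm{int}(\dom(\kappa^*))$, so $\bar y\in\mathrm{int}(\dom(\kappa^*\circ L^\top))$. Also $\bar y\in\dom(\phi^*(-\cdot))$, because the dual value at $\bar y$ is finite. Hence the relative interiors of the two domains meet, and Rockafellar's Theorem~23.8, together with the chain rule, gives $L\bar x\in\partial\phi^*(-\bar y)$.

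You can also argue directly. Start from $\kappa^*(L^\top(\bar y+th))-\kappa^*(L^\top\bar y)\ge\phi^*(-\bar y)-\phi^*(-\bar y-th)$. Divide by $t\downarrow0$, and use that difference quotients of the convex function $\phi^*$ are nondecreasing in $t$. This gives $\phi^*(-\bar y-h)\ge\phi^*(-\bar y)-\langle L\bar x,h\rangle$ for every $h$, which is the required inclusion. With this repair, Step~3 needs neither the qualification condition nor Step~2: weak duality from Step~1 already closes the chain $p\le\phi(L\bar x)+\kappa(\bar x)=d\le p$.

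One caveat on the degenerate case. If, as you say in Step~2, the dual is ``trivially attained'' when $p=-\infty$, then (ii) is false. Take $L=1$, $\kappa=|\cdot|$ and $\phi(w)=-2w$. The dual objective is identically $+\infty$, and $\kappa^*=\delta_{[-1,1]}$ is differentiable at $0$, but (P) has no solution. So ``solution'' must be read as a minimizer with finite value, as you tacitly do in Step~3. The attainment claim in (i) then holds only when $p$ is finite; this is an imprecision in the statement rather than in your argument.
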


\subsection{Basic notions from probability theory}\label{sec:Prob}

Throughout this paper, we fix a  compact set $\mathcal X\subset \mathbb R^m$ and let $\mathcal P(\mathcal X)$ denote the set of Borel probability measures on $\mathcal X$. For any two probability measures $\nu, \mu \in \mathcal{P}(\mathcal X)$, $\nu+\mu$ is a finite Borel measure. Hence, for any $f$ which is  integrable with respect to $\mu$ and $\nu$ we often write $\int f d\nu +\int fd \mu$ as $\int fd(\nu+\mu)$ and similarly for differences of Borel measures.

The {\em expectation} of a random variable $X$ with distribution $\mu\in\mathcal P(\mathcal X)$ is  
\[
\E[X] =\int_{\mathcal{X}}xd\mu(x) \in \R^{m}.\footnotemark
\]
\footnotetext{This vector-valued integral is to be understood component-wise which is consistent with the usual definition $\E[X]=\left(\E[X_1],\dots, \E[X_m]\right)^{\top}$ of the random vector $X$. This idea also extends to random matrices.}
We sometimes write $\mathbb E_{\mu}[X]$ to stress that the expectation is a function of the underlying distribution. The {\em covariance matrix} of $X$ is 
\begin{equation}
     \mathrm{cov}(X) = \E_{\mu}\left[(X -\E_{\mu}[X])(X -\E_{\mu}[X])^{\top
     }\right] \in \R^{m \times m}, \label{eqn:VarCovMatrix}
\end{equation}
which we sometimes write as $\cov_{\mu}(X)$. 
For scalar random variables (i.e., $m=1$), we instead write $\var$ for the {\em variance}.

The {\em Kullback-Leibler (KL) divergence} \cite{kullback1951information} of $Q \in \mathcal{P}(\mathcal{X})$ with respect to $\mu \in \mathcal{P}(\mathcal{X})$ is defined as
\begin{equation}
    \text{KL}(Q\Vert \mu) := \begin{cases} \int_{\mathcal{X}} \log(\frac{dQ}{d\mu}) d Q, &
\text{ if }Q \ll \mu, \\
    + \infty, & \text{ otherwise.} \end{cases} \label{def-KL}
\end{equation}

\subsection{The MEM problem}\label{sec:MEMPrelim} 
As mentioned in the introduction,  the {\em Maximum Entropy on the Mean (MEM) function} is given by
\begin{equation*}
    \kappa_{\mu}(y) := \inf\{ \mathrm{KL}(Q \: \Vert \: \mu) : \E_{Q}[X] = y , Q \in \mathcal{P}(\mathcal{X}) \},
\end{equation*}
where $\mu\in\mathcal P(\mathcal X)$ is a given prior distribution.
Given a signal $b\in\mathbb R^d$ and an operator $A\in\mathbb R^{d\times m}$, the MEM problem, introduced in \cite{le1999new}, consists of solving the convex optimization problem
\begin{equation}
    x_{\mu} = \argmin_{x \in \R^{m}}  \left\{ \alpha  g(Ax - b) \, +\,   \kappa_{\mu}(x) \right\},  \tag{$P_\mu$} \label{eqn:MEMPrimal}
\end{equation}
where $g$ is a lsc, proper, and convex function which measures the discrepancy between $Ax$ and $b$ and $\alpha>0$ is a hyperparameter that enables balancing the influence of both terms in the objective. Therefore, the MEM problem aims to recover a signal based on a transformed and perhaps noisy observation thereof. The function $g$ is generally used to enforce the fidelity of the signal, $Ax\approx b$, whereas the MEM function enables incorporating prior beliefs about the distribution of the signal.  
As shown in, e.g., \cite[Lemma~2.1]{mkr2024maximumentropy}, this problem has a unique solution under the qualification condition $0\in \mathrm{int}(\dom(g) - A\dom(\kappa_{\mu}))$ and solutions can be obtained by first solving the Fenchel-Rockafellar dual problem
\begin{equation}
    z_{\mu} \in \argmin_{z \in \R^{d}} \phi_{\mu}(z) := \alpha g^{*}(-z/\alpha)-\langle b,z\rangle + L_{\mu}(A^{\top}z), \tag{$D_\mu$} \label{eqn:MEMDual}
\end{equation} Here $L_{\mu}$ is the log-moment generating function (also known as the cumulant generating function) of $\mu$ given by
\begin{equation*}
    L_{\mu}(y):= \log M_{\mu}(y) = \log \int_{\mathcal{X}} \exp\langle y, \cdot \rangle d\mu.
\end{equation*}
Under our standing assumption that $\mathcal{X}$ is compact, we record the following key properties of $L_{\mu}$ and $\kappa_{\mu}$ for  ease of reference.
\begin{lemma} \label{lem:PropertiesOfLogMGF}
    For compact $\mathcal{X}$, let $\mu \in \mathcal{P}(\mathcal{X})$. The following hold:
    \begin{enumerate}[label=(\roman*)]
        \item $L_{\mu}$ is convex, finite-valued, and analytic. In particular, $L_{\mu}$ is continuously differentiable with gradient
    \begin{equation}
         \nabla L_{\mu}(y) = \frac{\int (\cdot) \exp\langle y, \cdot\rangle d\mu}{\int \exp\langle y,\cdot \rangle d\mu}. \label{eqn:GradLogMGF}
    \end{equation}
    \item $\kappa_{\mu} = L^{*}_{\mu}$ and $\kappa_{\mu}^{*} = L_{\mu}$.
    \item $\lim_{\Vert x \Vert \to \infty} \frac{\kappa_{\mu}(x)}{\Vert x \Vert} =\infty$, and $\kappa_{\mu}$ is strictly convex on every convex subset of $\{ x \: \vert \: \partial \kappa_{\mu}(x) \neq \emptyset \}$. Namely, $\kappa_{\mu}$ is supercoercive and essentially strictly convex.
    \end{enumerate}
\end{lemma}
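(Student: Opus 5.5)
I will prove the three items in order. Items (i) and (ii) are essentially classical facts about log-Laplace transforms of compactly supported measures. Item (iii) then follows from (ii) by standard conjugacy arguments.

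For (i), compactness of $\mathcal X$ gives $R:=|\mathcal X|_\infty<\infty$. Hence $e^{-\|y\|R}\le \exp\langle y,x\rangle\le e^{\|y\|R}$ for $x\in\mathcal X$, and $M_\mu(y)\in[e^{-\|y\|R},e^{\|y\|R}]$. In particular $M_\mu(y)$ is positive and finite, so $L_\mu$ is finite-valued.

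Convexity of $L_\mu$ follows from Hölder's inequality with exponents $1/\lambda$ and $1/(1-\lambda)$ applied to $\exp\langle \lambda y_1+(1-\lambda)y_2,\cdot\rangle$.

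For analyticity, $M_\mu$ extends to an entire function on $\mathbb C^m$. The power series of the exponential converges uniformly on $\mathcal X$, so it can be integrated termwise; alternatively, differentiate under the integral sign using the dominating bound $\|x\|^k e^{\|y\|R}\le R^ke^{\|y\|R}$, which holds locally uniformly in $y$. Since $M_\mu>0$ on $\R^m$, the composition $\log\circ M_\mu$ is real-analytic. Differentiating under the integral then gives $\nabla M_\mu(y)=\int x\exp\langle y,x\rangle d\mu$, and the chain rule yields \eqref{eqn:GradLogMGF}.

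For (ii), the identity $L_\mu^*=\kappa_\mu$ comes from the Donsker--Varadhan variational formula:
\[
L_\mu(y)=\sup_{Q\in\mathcal P(\mathcal X)}\{\langle y,\E_Q[X]\rangle-\mathrm{KL}(Q\Vert\mu)\}.
\]
The supremum is attained at the Gibbs measure $dQ_y\propto e^{\langle y,\cdot\rangle}d\mu$, as one checks from $\mathrm{KL}(Q\Vert Q_y)\ge0$. Grouping the $Q$'s by their mean $x=\E_Q[X]$ gives $L_\mu(y)=\sup_x\{\langle y,x\rangle-\kappa_\mu(x)\}=\kappa_\mu^*(y)$.

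To conclude $\kappa_\mu=L_\mu^*$, it then suffices, by Fenchel--Moreau, to show that $\kappa_\mu$ is proper, lsc, and convex. Convexity follows from joint convexity of KL together with linearity of the mean constraint. Properness holds because $\kappa_\mu(\E_\mu X)=0$ and $\kappa_\mu\ge 0$. Lower semicontinuity is the main technical point. Take $x_k\to x$ with $\kappa_\mu(x_k)$ bounded, and pick near-optimal $Q_k$. The sublevel sets of $\mathrm{KL}(\cdot\Vert\mu)$ are weakly compact in $\mathcal P(\mathcal X)$, since $\mathcal X$ is compact and KL is weakly lsc. The map $Q\mapsto\E_Q[X]$ is weakly continuous, because $x\mapsto x$ is bounded and continuous on $\mathcal X$. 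Passing to a subsequence therefore gives a limit $Q$ with $\E_Q X=x$ and $\mathrm{KL}(Q\Vert\mu)\le\liminf \kappa_\mu(x_k)$, which is lower semicontinuity. The same argument shows that the infimum defining $\kappa_\mu$ is attained.

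For (iii), supercoercivity follows because the conjugate $L_\mu$ of $\kappa_\mu$ is finite everywhere. Indeed, $\kappa_\mu(x)\ge\langle y,x\rangle-L_\mu(y)$ for every $y$. Choosing $y=t\,x/\|x\|$ gives $\liminf\kappa_\mu(x)/\|x\|\ge t-\max_{\|y\|=t}L_\mu(y)/\|x\|\to t$ as $\|x\|\to\infty$, and $t$ is arbitrary.

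For essential strict convexity, Rockafellar's Theorem~26.3 says that a closed proper convex function is essentially strictly convex iff its conjugate is essentially smooth. Since $L_\mu$ is finite and differentiable everywhere by (i), it is essentially smooth, with no boundary behavior to check. Hence $\kappa_\mu=L_\mu^*$ is essentially strictly convex.

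The main obstacle is the lower semicontinuity of $\kappa_\mu$ in (ii), which needs the weak compactness of KL sublevel sets. If desired, this step can be sidestepped: compute $L_\mu^*$ directly, and show $L_\mu^*(x)\le\kappa_\mu(x)$ from Donsker--Varadhan and $L_\mu^*(x)\ge\kappa_\mu(x)$ via the attained Gibbs measure $Q_y$ when $x=\nabla L_\mu(y)$. This handles $x$ in the range of $\nabla L_\mu$, which is the relative interior of $\mathrm{conv}(\spt\mu)$. The remaining boundary points still require the compactness argument above.
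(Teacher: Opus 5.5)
Your proof is correct. The paper does not prove this lemma itself; it only says the properties ``are collected in'' \cite[Section~2.2]{mkr2024maximumentropy}, which assembles classical facts about log-Laplace transforms and $\kappa_\mu$ (cf.\ \cite{vaisbourd2022maximum}). So your argument is a genuinely different, self-contained route:
\begin{enumerate}[label=(\alph*)]
\item you get $\kappa_\mu^*=L_\mu$ from the Gibbs variational principle via $\mathrm{KL}(Q\Vert Q_y)\ge 0$, which is legitimate because $\log(dQ_y/d\mu)=\langle y,\cdot\rangle-L_\mu(y)$ is bounded on $\mathcal X$;
\item you upgrade this to $\kappa_\mu=L_\mu^*$ by Fenchel--Moreau, after checking that $\kappa_\mu$ is convex, proper and lower semicontinuous;
\item you obtain (iii) purely by conjugate duality: Fenchel--Young for supercoercivity, and \cite[Theorem~26.3]{rockafellar1970convex} for essential strict convexity.
\end{enumerate}

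What your approach buys is transparency about where compactness of $\mathcal X$ enters. It is used once for finiteness and analyticity of $L_\mu$. It is used again, through Prokhorov compactness of $\mathcal P(\mathcal X)$ and weak lower semicontinuity of $\mathrm{KL}(\cdot\Vert\mu)$, for closedness of $\kappa_\mu$. You rightly flag that step as indispensable: without it you only get $L_\mu^*=\mathrm{cl}\,\kappa_\mu$. As a by-product you also get attainment of the infimum defining $\kappa_\mu$, which the paper uses implicitly when it speaks of the optimal distribution $\bar Q$. The citation route is shorter but hides all of this.

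Two small points.
\begin{enumerate}[label=(\alph*)]
\item In the lower semicontinuity step, first pass to a subsequence along which $\kappa_\mu(x_k)\to\liminf_k\kappa_\mu(x_k)$. Mere boundedness only yields the inequality along whatever subsequence Prokhorov hands you.
\item In your optional sidestep, points outside $\mathrm{conv}(\spt\mu)$ are harmless. Both functions equal $+\infty$ there, for $L_\mu^*$ by strict separation from the compact set $\mathrm{conv}(\spt\mu)$. So, as you say, only relative boundary points genuinely need the compactness argument.
\end{enumerate}
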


\noindent
These properties are collected in, e.g., \cite[Section 2.2]{mkr2024maximumentropy}.

Given a solution $z_{\mu}$ to \eqref{eqn:MEMDual}, the solution, $x_{\mu}$, to \eqref{eqn:MEMPrimal} can be obtained via the primal-dual recovery formula given by \cref{th:FenRock},
\begin{equation}
    x_{\mu} = \nabla L_{\mu}(A^{\top}z_{\mu}). \label{eqn:primalDualRecoveryFormula}
\end{equation} 
It is noteworthy that the dual solution $z_{\mu}$ can also be used to recover
\[
\bar Q =\argmin\{ \mathrm{KL}(Q \: \Vert \: \mu) : \E_{Q}[X] = x_{\mu} , Q \in \mathcal{P}(\mathcal{X}) \},
\]
i.e.,   the optimal distribution  in the definition of $\kappa_{\mu}(x_{\mu})$, see \cite[Lemma 3]{rioux2020maximum}. Namely, it is given by $\bar Q= \gamma_{\mu,z_{\mu}}$ where, for each $z\in\mathbb R^d$, $\gamma_{\mu,z}\in\mathcal P(\mathcal X)$ is the unique probability distribution which admits the Radon-Nikodym derivative 
   \begin{equation}
\label{eq:defGamma}
\frac{d\gamma_{\mu,z}}{d\mu}:u\in\mathcal X\mapsto \frac{\exp\langle A^{\top}z,u\rangle}{\int \exp\langle A^{\top}z,\cdot\rangle d\mu}. 
\end{equation} 
In effect, in light of the primal-dual recovery condition, it is easily seen that the expectation of $\gamma_{\mu,z_{\mu}}$ is indeed $x_{\mu}$. Precisely, we observe that
\begin{equation}
    \E_{\gamma_{\mu,z_{\mu}}}[X] = \frac{\int u\exp\langle A^{\top}z_{\mu},u\rangle d\mu(u)}{\int \exp\langle A^{\top}z_{\mu},u\rangle d\mu(u)}  = \nabla L_{\mu}(A^{\top}z_{\mu}) = x_{\mu} \label{eqn:expectationgamma}.
\end{equation}
Here the left equality is found by applying $\E_{\gamma_{\mu,z_{\mu}}}[X] = \int x d\gamma_{\mu,z_{\mu}}(x) =\int x \frac{d\gamma_{\mu,z_{\mu}}}{d\mu}(x) d\mu(x)$ as follows from the Radon-Nikodym theorem, see e.g., \cite[Theorem 5.5.4]{dudley2018real}, and the right equality follows from evaluating \eqref{eqn:GradLogMGF} at $A^{\top}z_{\mu}$.

\noindent
Throughout this work we make the following mild standing assumption: 
\begin{tcolorbox}[colback=gray!5!white,colframe=gray!75!black]
\vspace{-1em}
 \begin{equation}\label{eq:ass}
\textrm{$g:\R^d\to \R$ is convex and $\frac{1}{\beta}$-smooth,\tag{A1} i.e., $\nabla g$ is (globally) $\frac{1}{\beta}$-Lipschitz.}
\end{equation}
\end{tcolorbox}

 Moreover, \eqref{eq:ass} implies that $g^*$ is $\beta$-strongly convex which, in particular, yields that 
\begin{tcolorbox}[colback=gray!5!white,colframe=gray!75!black]
\begin{center}
The dual objective function $\phi_\mu$  is $\frac{\beta}{\alpha}$-strongly convex for all $\mu\in \mathcal{P}(\mathcal{X})$.
\end{center}
\end{tcolorbox}

 The prototypical example $g = \frac 12\Vert \cdot \Vert^{2}$ satisfies (\ref{eq:ass}), but there are other examples of interest beyond this. For example, any $g$ which is the Moreau envelope of another proper lsc convex function satisfies \eqref{eq:ass} \cite[Example 10.32]{rockafellar1998variational}. From the machine learning literature, the softplus $g(x) = \log(1+e^{\langle a, x\rangle})$ with $a\in\mathbb R^d$ and the radial loss $g(x) = \sqrt{1+ \Vert x \Vert^{2}}$ also satisfy this condition. Finally, we may also consider any separable loss functions which satisfies the assumptions component-wise, e.g., $g = \sum_{i=1}^{d} g_{i}(x_{i})$ for $g_{i}$ convex and $\frac{1}{\beta_{i}}$-smooth. This last category includes, for example, the log-cosh loss $g_{i} = \log \cosh(\cdot)$ with $\beta_i=1$.

As noted in \Cref{sec:Intro},  in the case where $\mu$ is unknown -- such as in imaging problems -- a natural approach  is to approximate $\mu$ via observed data samples. Explicitly, given samples 
\[
    X_{1},\ldots,X_{n} \stackrel{\mathrm{i.i.d.}}{\sim} \mu,
\]
   the {\em empirical distribution}, 
  \[
  \hat{\mu}_{n} = \frac{1}{n} \sum_{i=1}^{n} \delta_{X_{i}},
  \]
  serves as a natural surrogate for a population-level prior distribution. This choice of prior leads to an {\em empirical dual problem}
  \[
 z_{\hat{\mu}_{n}}\in \argmin_{z\in\mathbb R^d}\left\{ \alpha g^{\ast}\Big(-\frac{ z}\alpha\Big)-\langle b,z\rangle + L_{\hat{\mu}_{n}}(A^{\top}z)\right\}
  \]
  for the log-moment generating $L_{\hat{\mu}_{n}}$ of $\hat{\mu}_{n}$.

\section{Parametric rates for convergence in expectation}
\label{sec:parametricRates}

In the sequel, we establish the parametric mean rate of convergence of the empirical primal and dual solutions towards their population-level counterparts. The main result of this section is as follows. 

\begin{theorem}[Primal and dual rates]
    \label{thm:main}
    Fix $\alpha>0$, a fidelity function $g:\mathbb R^d\to \mathbb R$ satisfying \eqref{eq:ass}, a prior $\mu\in\mathcal P(\mathcal X)$, and let $X_1,\dots, X_n\stackrel{i.i.d.}{\sim}\mu$. Further, let $z_{\hat{\mu}_{n}, \varepsilon}$ be a  $\varepsilon$-approximate solution to the (empirical) dual MEM problem for $\hat{\mu}_{n}$ for some  $\varepsilon\geq 0$ , i.e.,
    \begin{equation*}
        \phi_{\hat{\mu}_{n}}(z_{\hat{\mu}_{n}, \varepsilon}) \leq \inf_{z}\phi_{\hat{\mu}_{n}}(z) + \varepsilon,
    \end{equation*}
    which is obtained by a measurable selection from the set of all $\varepsilon$-approximate solutions. Then, letting $z_{\mu}$ denote the solution of the (population) dual MEM problem for ${\mu}$,  
    \begin{equation}
        \label{eq:dualRate}
        \E \big[\Vert z_{\hat{\mu}_{n}, \varepsilon} - z_{\mu} \Vert\big] \leq \frac{C}{\sqrt n}+\sqrt{\frac{2\alpha}{\beta}\varepsilon},
    \end{equation}
    where $C = \frac{\alpha}{\beta}\|A\|C'$ for $C'=2|\spt(\mu)|_{\infty}\exp\left(2\|A\|\|z_{\mu}\||\spt(\mu)|_{\infty}\right)$. Moreover, setting $K'=|\spt(\mu)|^2_{\infty}\|A\|$, if $ x_{\hat{\mu}_{n}, \varepsilon} := \nabla L_{\hat{\mu}_{n}}(A^{\top}z_{\hat{\mu}_{n}, \varepsilon})$ is the recovered (approximate) primal solution for $\hat \mu_n$ and $x_{\mu}$ is the primal solution for $\mu$, 
    \begin{equation}
        \label{eq:primalRate}
        \E \big[\Vert x_{\hat{\mu}_{n}, \varepsilon} - x_{\mu} \Vert\big] \leq \frac{K'C+C'}{\sqrt{n}} + K' \sqrt{\frac{2\alpha}{\beta}\varepsilon}.
    \end{equation} 
    If, moreover, $A$ is injective, and $g$ is $\frac{1}{\gamma}$-strongly convex, then
\begin{equation*}
\mathbb E \left[    \Vert x_{\hat \mu_n,\varepsilon}- x_{\mu} \Vert\right] \leq \frac{\gamma C}{ \alpha\sigma_{\min}(A)} \frac{1}{\sqrt n}+K' \sqrt{\frac{2\alpha}{\beta}\varepsilon}. 
\end{equation*}
\end{theorem}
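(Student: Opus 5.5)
We first reduce the approximate case to the exact one. Since $\phi_{\hat\mu_n}$ is $\frac{\beta}{\alpha}$-strongly convex, Proposition~\ref{prop:strongConvexity}(ii) gives $\frac{\beta}{2\alpha}\|z_{\hat\mu_n,\varepsilon}-z_{\hat\mu_n}\|^2\le \phi_{\hat\mu_n}(z_{\hat\mu_n,\varepsilon})-\phi_{\hat\mu_n}(z_{\hat\mu_n})\le\varepsilon$. Hence $\|z_{\hat\mu_n,\varepsilon}-z_{\hat\mu_n}\|\le\sqrt{2\alpha\varepsilon/\beta}$ deterministically, and by the triangle inequality it suffices to bound $\E\|z_{\hat\mu_n}-z_\mu\|\le C/\sqrt n$.

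\textbf{Dual stability.} The key step is a perturbation bound for $\nu,\mu\in\mathcal P(\mathcal X)$. The optimality conditions are $0=\nabla\phi_\mu(z_\mu)$ and $0=\nabla\phi_\nu(z_\nu)$. Strong monotonicity (Proposition~\ref{prop:strongConvexity}(i.3)) applied to $\phi_\nu$ gives
\[
\tfrac{\beta}{\alpha}\|z_\nu-z_\mu\|^2\le\langle\nabla\phi_\nu(z_\nu)-\nabla\phi_\nu(z_\mu),z_\nu-z_\mu\rangle=\langle\nabla\phi_\mu(z_\mu)-\nabla\phi_\nu(z_\mu),z_\nu-z_\mu\rangle .
\]
The $g^*$ and linear terms cancel in the gradient difference, so $\nabla\phi_\mu(z_\mu)-\nabla\phi_\nu(z_\mu)=A(\nabla L_\mu-\nabla L_\nu)(A^\top z_\mu)$. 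Therefore
\[
\|z_\nu-z_\mu\|\le\tfrac{\alpha}{\beta}\|A\|\,\|\nabla L_\mu(y)-\nabla L_\nu(y)\|,\qquad y=A^\top z_\mu .
\]
The point $y$ is fixed and deterministic, which is what avoids the $n^{-1/4}$ loss.

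\textbf{Expectation at the fixed point.} Set $\nu=\hat\mu_n$ and write $M_\nu=\int e^{\langle y,\cdot\rangle}d\nu$ and $N_\nu=\int u\,e^{\langle y,u\rangle}d\nu$. Then
\[
\nabla L_\nu-\nabla L_\mu=\frac{(N_\nu-N_\mu)-\nabla L_\mu(y)(M_\nu-M_\mu)}{M_\nu}.
\]
Since $\hat\mu_n$ is supported in $\spt(\mu)$ almost surely, with $R:=|\spt(\mu)|_\infty$ we have $M_\nu\ge e^{-\|y\|R}$ and $\|\nabla L_\mu(y)\|\le R$. Also $\|y\|\le\|A\|\|z_\mu\|$. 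So the numerator is the empirical average of the i.i.d. centered vectors $(u-\nabla L_\mu(y))e^{\langle y,u\rangle}-\E[\cdot]$. By Jensen, its expected norm is at most $n^{-1/2}$ times the square root of the second moment, which is bounded by $2Re^{\|y\|R}$. This gives $\E\|\nabla L_{\hat\mu_n}(y)-\nabla L_\mu(y)\|\le C'/\sqrt n$ with $C'=2Re^{2\|A\|\|z_\mu\|R}$, and hence \eqref{eq:dualRate}. Measurability of the selection is assumed.

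\textbf{Primal.} Split the error as
\[
\|x_{\hat\mu_n,\varepsilon}-x_\mu\|\le\|\nabla L_{\hat\mu_n}(A^\top z_{\hat\mu_n,\varepsilon})-\nabla L_{\hat\mu_n}(A^\top z_\mu)\|+\|\nabla L_{\hat\mu_n}(A^\top z_\mu)-\nabla L_\mu(A^\top z_\mu)\|.
\]
The second term is bounded in expectation by $C'/\sqrt n$ from the previous step. For the first term, we need a Lipschitz bound on $\nabla L_\nu$, i.e.\ the role of Lemma~\ref{lem:LipshitzConstant}. The Hessian $\nabla^2L_\nu(w)=\cov_{\gamma}(X)$ is the covariance under a tilted measure supported in $\spt(\mu)$, so its operator norm is at most $R^2$. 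Hence the first term is at most $R^2\|A\|\|z_{\hat\mu_n,\varepsilon}-z_\mu\|$, and combining with \eqref{eq:dualRate} gives \eqref{eq:primalRate} with $K'=R^2\|A\|$.

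\textbf{Injective case.} Use the primal optimality conditions instead: $-\alpha A^\top\nabla g(Ax_\mu-b)=A^\top z_\mu$, i.e.\ $z_\mu=-\alpha\nabla g(Ax_\mu-b)$, and similarly for the exact empirical solution. Then
\[
\|z_{\hat\mu_n}-z_\mu\|=\alpha\|\nabla g(Ax_{\hat\mu_n}-b)-\nabla g(Ax_\mu-b)\|\ge\tfrac{\alpha}{\gamma}\sigma_{\min}(A)\|x_{\hat\mu_n}-x_\mu\|
\]
by Proposition~\ref{prop:strongConvexity}(iii). Taking expectations gives the stated $n^{-1/2}$ term. The $\varepsilon$ term is handled as before through $K'$ and the Lipschitz bound of $\nabla L_{\hat\mu_n}$ between $A^\top z_{\hat\mu_n,\varepsilon}$ and $A^\top z_{\hat\mu_n}$.

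The main obstacle is the dual stability step: recognizing that strong monotonicity lets us compare gradients at the fixed population point $A^\top z_\mu$. Everything else is then a fixed-point law-of-large-numbers computation.
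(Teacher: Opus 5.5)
Your proposal is correct and is essentially the paper's proof. The shared ingredients are:
\begin{itemize}
\item strong monotonicity compared at the fixed population point $z_\mu$ (the paper's \cref{lem:StabilityDualGradientBound} with $\rho=\hat\mu_n$, $\eta=\mu$);
\item the ratio identity for $\nabla L_{\hat\mu_n}-\nabla L_\mu$, combined with the a.s.\ lower bound on $\int e^{\langle y,\cdot\rangle}d\hat\mu_n$ and a second-moment bound;
\item the covariance bound on the Hessian, which gives the Lipschitz constant $K'$;
\item the strong-convexity estimate for the $\varepsilon$-term.
\end{itemize}
You merge the two integral terms into one centered vector, and you split the primal error directly at $\nabla L_{\hat\mu_n}(A^\top z_\mu)$ instead of passing through $x_{\hat\mu_n}$. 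Both changes are cosmetic and yield the same constants.

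Two points need tightening.

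\textbf{Subgradients in the dual step.} Under \eqref{eq:ass} alone, $g^*$ is strongly convex but need not be differentiable, so $\nabla\phi_\nu$ may not exist. For example, take $g=\tfrac12{\rm dist}^2(\cdot,C)$ with $C$ closed, convex and containing more than one point. Then $g^*=\sigma_C+\tfrac12\|\cdot\|^2$, where $\sigma_C$ is the support function, and this has a kink at $0$. Moreover $z_\mu=0$ whenever $A\mathbb E_\mu[X]-b\in C$, so the kink can sit exactly at the dual solution. The repair is to use the sum rule, which gives $\partial\phi_\nu(z_\mu)=\partial\phi_\mu(z_\mu)+A(\nabla L_\nu-\nabla L_\mu)(A^\top z_\mu)$. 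Since $0\in\partial\phi_\mu(z_\mu)$, this set contains $A(\nabla L_\nu-\nabla L_\mu)(A^\top z_\mu)$; this is exactly \cref{lem:nonemptySubdifferentials}. With $0\in\partial\phi_\nu(z_\nu)$, your monotonicity inequality then goes through verbatim.

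\textbf{The KKT relation in the injective case.} The relation $z_\mu=-\alpha\nabla g(Ax_\mu-b)$ does not follow from its image under $A^\top$, because $A^\top$ is not injective when $d>m$. It does follow from the dual optimality condition $Ax_\mu-b\in\partial g^*(-z_\mu/\alpha)$ (using $x_\mu=\nabla L_\mu(A^\top z_\mu)$) together with $(\partial g^*)^{-1}=\partial g$. Your resulting inequality is then the same as \cref{prop:primalStability}~(ii). The paper obtains it from $\gamma$-smoothness of $g^*$, whereas you use strong convexity of $g$; the two are equivalent by conjugate duality.
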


We first comment on the condition that solutions are obtained via a measurable selection.

\begin{remark}[Measurable selection]
\label{rmk:measurableSelection}
    Fix $n\in\mathbb N$ and consider the function 
    \[
    F_n: (x_1,\dots, x_n)\times z \in\mathcal X^n\times \mathbb R^d\mapsto \alpha g^{\ast}\Big(-\frac{ z}\alpha\Big)-\langle b,z\rangle +\log\Big(\frac 1n \sum_{i=1}^n \exp\langle z, A x_i\rangle\Big),
    \]
    which is evidently (jointly) lsc. By \cite[Example 14.31]{rockafellar1998variational}, $F_n$ is a normal integrand so that 
    \[
    (x_1,\dots,x_n)\in\mathcal X^n\mapsto\{z\in\mathbb R^d:F_n(x_1,\dots,x_n,z)\leq \inf_{z\in\mathbb R^d} F_n(x_1,\dots,x_n,z)+\varepsilon\}
    \]
    is closed-valued and measurable, noting that the infimum is always attained by strong convexity of the objective,  see Proposition 14.33 and Theorem 14.37 from the same reference. By \cite[Corollary 14.6]{rockafellar1998variational}, there exists a measurable selection of $\varepsilon$-minimizer for $F_n(x_1,\dots,x_n,\cdot)$ and, since the data generating process $\omega\in \Omega\mapsto (X_1(\omega),\dots, X_n(\omega))\in\mathcal X^n$ is measurable (by definition), there exists a measurable selection of the dual $\varepsilon$-minimizers $z_{\hat \mu_n,\varepsilon}$ as the composition of Borel measurable maps is measurable. It follows that the corresponding selection of primal solutions is  measurable. If $\varepsilon =0$, no selection is required as each problem admits a unique solution.   

Practically, any algorithm which takes the samples and returns a $\varepsilon$-minimizer based on an iterative application of measurable operations is a measurable selection rule provided that the initialization and stopping rules are also measurable. 
\end{remark}

{}

 The proof of \cref{thm:main} follows the following main steps. 
\begin{enumerate}
    \item We show that for any $\rho,\eta\in\mathcal P(\mathcal X)$, the unique minimizers $z_{\rho},z_{\eta}$, of $\phi_{\rho},\phi_{\eta},$ satisfy 
 \begin{equation}
 \label{eq:outline1}
 \left\|z_{\eta} - z_{\rho} \right\|\leq C_1 \left|\int \exp\langle A^{\top}z_{\eta},\cdot\rangle d(\eta-\rho)\right|+C_2\left\|\int(\cdot) \exp\langle A^{\top}z_{\eta},\cdot\rangle d(\rho-\eta)\right\|,
 \end{equation}
 for constants $C_1,C_2\geq 0$ which are independent of the choice of $\rho$ and $\eta$, see \cref{lem:StabilityDualGradientBound,lem:ConstantUpperBound}. This is accomplished by leveraging the strong monotonicity of the subgradients of $\phi_{\rho}$ and $\phi_{\eta}$ and the fact that the subgradients depend on $\rho,\eta$ only through the terms $\nabla L_{\rho}$ and $\nabla L_{\eta}$ whose difference can be controlled in terms of the integral terms in \eqref{eq:outline1}, see \cref{lem:elementaryBounds}. 

 \item Next, for a  prior $\mu\in\mathcal P(\mathcal X)$ and the empirical measure $\hat \mu_n$ from $n$ i.i.d. samples $X_1,\dots,X_n$ from $\mu$, it follows from \eqref{eq:outline1} that $\mathbb E\big[\|z_{\hat \mu_n}-z_{\mu}\|\big]$ is bounded above by 
 \[
    C_1 \sqrt{\var\Big[\frac{1}{n}\sum_{i=1}^n\exp\langle A^{\top}z_{\mu},X_i\rangle\Big]}+ C_2 \sqrt{\tr\left(\cov\Big[\frac{1}{n}\sum_{i=1}^nX_i\exp\langle A^{\top}z_{\mu},X_i\rangle\Big] \right)},
 \]
 so that both terms scale as $n^{-1/2}$ upon showing that that the variance/covariance of the summands are uniformly bounded, see \cref{thm:parametricRatesDual}. To this effect, we show in \cref{thm:stabilityDual} that $\|z_{\mu}\|$ can be bounded \emph{a priori}. Applying the triangle inequality,
 \[
     \mathbb E\big[\|z_{\hat \mu_{n},\varepsilon}-z_{\mu}\|\big] \leq \mathbb E\big[\|z_{\hat \mu_{n}}-z_{\mu}\|\big]+ \mathbb E\big[\|z_{\hat \mu_{n},\varepsilon}-z_{\hat \mu_n}\|\big],  
 \]
 where the first term on the right-hand side is bounded as described above whereas the second is bounded as $\sqrt{\frac{2\alpha}{\beta}(\phi_{\hat \mu_n}(z_{\hat \mu_n,\varepsilon})-\phi_{\hat \mu_n}(z_{\hat \mu_n}))}\leq\sqrt{\frac{2\alpha}\beta\varepsilon}$ as follows from \eqref{eqn:strongConvexVer2}. 

 \item Finally, we transfer the rates for dual solutions to the primal solutions by using the (approximate) primal-dual recovery formula $x_{\hat \mu_n,\varepsilon}=\nabla L_{\hat \mu_{n}}(A^{\top}z_{\hat \mu_n,\varepsilon})$. This is accomplished by leveraging the stability of the gradient with respect to perturbations in the measures mentioned previously and by establishing that $\nabla L_{\mu}$ is Lipschitz continuous. The error due to the approximate nature of solutions is handled exactly as in item 2.  
\end{enumerate}

 The remainder of this section is dedicated to proving \cref{thm:main} starting with two technical lemmas. The first will enable us to characterize the stability of $\nabla L_{\rho}$ at a fixed point $z$ under perturbations of the underlying measure as outlined in item 1 above.

\begin{lemma}[Gradient stability] 
\label{lem:elementaryBounds}
Fix  $B\in\mathbb R^{d\times m}$, $z\in\mathbb R^d$, and $\rho,\eta\in\mathcal P(\mathcal X)$. Then:
    \begin{enumerate}[label=(\roman*)]
     \setlength\itemsep{1em}
        \item \label{eq:normBound} $\begin{aligned}[t]
            \Bigg\Vert \frac{\int B(\cdot)\exp\langle A^{\top}z,\cdot\rangle d\rho}{\int \exp\langle A^{\top}z,\cdot\rangle d\rho}&- \frac{\int B(\cdot)\exp\langle A^{\top}z,\cdot\rangle d\eta}{\int \exp\langle A^{\top}z,\cdot\rangle d\eta} \Bigg\Vert \\ 
            &\hspace{-7em}\leq \kappa(B,\rho,\eta,z) \left|\int \exp\langle A^{\top}z,\cdot\rangle d(\eta-\rho)\right| 
             +\zeta(B,\eta,z)\left\|\int(\cdot) \exp\langle A^{\top}z,\cdot\rangle d(\rho-\eta)\right\|
        \end{aligned}$ \\
        where, recalling the measure $\gamma_{\rho,z}$ as defined in \eqref{eq:defGamma},
        \begin{align*}
        \zeta(B,\eta,z):=\|B\|\left(\int \exp\langle A^{\top}z,\cdot\rangle d\eta\right)^{-1},\quad \kappa(B,\rho,\eta,z):=\|\mathbb E_{\gamma_{\rho,z}}[X]\|\zeta(B,\eta,z).
       \end{align*} 
        \item \label{eq:gammaexpectation} For $\gamma_{\rho,z}$ as defined by \eqref{eq:defGamma},  $\|\mathbb E_{\gamma_{\rho,z}}[X]\|\leq \mathbb E_{\gamma_{\rho,z}}[\|X\|]\leq  |\spt(\rho)|_{\infty}$.
        \item  \label{eq:kappaBounds}  $\kappa(B,\rho,\eta,z)$ and $\zeta(B,\eta,z)$ satisfy the bounds \[
        \begin{aligned} 
        \kappa(B,\rho,\eta,z)&\leq \|B\||\spt(\rho)|_{\infty}\exp\left(\|A\|\|z\||\spt(\eta)|_{\infty}\right)  \\ 
        \zeta(B,\eta,z)&\leq \|B\|\exp\left(\|A\|\|z\||\spt(\eta)|_{\infty}\right).
        \end{aligned} 
        \]
        
    \end{enumerate}
\end{lemma}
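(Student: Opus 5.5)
Write $M_\rho:=\int \exp\langle A^{\top}z,\cdot\rangle d\rho$, $M_\eta:=\int \exp\langle A^{\top}z,\cdot\rangle d\eta$, $N_\rho:=\int(\cdot)\exp\langle A^{\top}z,\cdot\rangle d\rho$ and $N_\eta:=\int(\cdot)\exp\langle A^{\top}z,\cdot\rangle d\eta$. These are finite, and $M_\rho,M_\eta>0$, because $\mathcal X$ is compact and the integrand is continuous and positive. By linearity, the left-hand side of \ref{eq:normBound} is $\|B(N_\rho/M_\rho - N_\eta/M_\eta)\|\le \|B\|\,\|N_\rho/M_\rho-N_\eta/M_\eta\|$, so it suffices to treat $B=I$ and multiply by $\|B\|$ at the end.

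For \ref{eq:normBound} I add and subtract $N_\rho/M_\eta$:
\[
\frac{N_\rho}{M_\rho}-\frac{N_\eta}{M_\eta}=\frac{N_\rho}{M_\rho}\cdot\frac{M_\eta-M_\rho}{M_\eta}+\frac{N_\rho-N_\eta}{M_\eta}.
\]
By \eqref{eqn:expectationgamma} (with $\mu$ replaced by $\rho$ and $z_\mu$ by $z$), $N_\rho/M_\rho=\mathbb E_{\gamma_{\rho,z}}[X]$. Also $M_\eta-M_\rho=\int\exp\langle A^{\top}z,\cdot\rangle d(\eta-\rho)$ and $N_\rho-N_\eta=\int(\cdot)\exp\langle A^{\top}z,\cdot\rangle d(\rho-\eta)$. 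Taking norms and applying the triangle inequality gives exactly the bound in \ref{eq:normBound}, with coefficients $\|\mathbb E_{\gamma_{\rho,z}}[X]\|\,\|B\|/M_\eta=\kappa$ and $\|B\|/M_\eta=\zeta$.

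For \ref{eq:gammaexpectation}, the first inequality is Jensen's inequality for the convex function $\|\cdot\|$ under the probability measure $\gamma_{\rho,z}$. For the second, note that $\gamma_{\rho,z}\ll\rho$, so $\gamma_{\rho,z}$ is concentrated on $\spt(\rho)$. Hence $\|X\|\le|\spt(\rho)|_\infty$ holds $\gamma_{\rho,z}$-almost surely, and integrating gives the bound.

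For \ref{eq:kappaBounds}, the key step is a lower bound on $M_\eta$. For $u\in\spt(\eta)$, Cauchy--Schwarz gives $\langle A^{\top}z,u\rangle\ge-\|A\|\|z\|\|u\|\ge -\|A\|\|z\||\spt(\eta)|_\infty$. Since $\eta$ is a probability measure concentrated on its support, integrating yields $M_\eta\ge\exp(-\|A\|\|z\||\spt(\eta)|_\infty)$. Inverting this gives the bound on $\zeta$. Combining it with \ref{eq:gammaexpectation} gives the bound on $\kappa$.

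All steps are elementary. The only point needing care is the measure-theoretic fact that a Borel probability measure on the compact, hence separable, set $\mathcal X$ gives full mass to its support, and that $\gamma_{\rho,z}$ inherits this support. This is what justifies replacing $\|u\|$ by the support radius almost everywhere.
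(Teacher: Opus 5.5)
Your proposal is correct and follows essentially the same route as the paper. Your add-and-subtract of $N_\rho/M_\eta$ is algebraically identical to the paper's manipulation of the combined numerator, and parts (ii) and (iii) go through exactly as in the paper: Jensen's inequality, the support inclusion for $\gamma_{\rho,z}$, and the pointwise lower bound on $\exp\langle A^{\top}z,\cdot\rangle$ integrated against $\eta$.
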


\begin{proof} For the first assertion, we directly compute
\begin{align*}
 &\frac{\int B(\cdot)\exp\langle A^{\top}z,\cdot\rangle d\rho}{\int \exp\langle A^{\top}z,\cdot\rangle d\rho}- \frac{\int B(\cdot)\exp\langle A^{\top}z,\cdot\rangle d\eta}{\int \exp\langle A^{\top}z,\cdot\rangle d\eta} \nonumber\\[0.5em]
 &= \frac{\int B(\cdot)\exp\langle A^{\top}z,\cdot\rangle d\rho \int \exp\langle A^{\top}z,\cdot\rangle d\eta - \int B(\cdot)\exp\langle A^{\top}z,\cdot\rangle d\eta \int \exp\langle A^{\top}z,\cdot\rangle d\rho}{\int \exp\langle A^{\top}z,\cdot\rangle d\rho \int \exp\langle A^{\top}z,\cdot\rangle d\eta}  \\[0.5em]
 &= \frac{\int B(\cdot)\exp\langle A^{\top}z,\cdot\rangle d\rho\int \exp\langle A^{\top}z,\cdot\rangle d(\eta-\rho)+\int  \exp\langle A^{\top}z,\cdot\rangle d\rho\int B(\cdot) \exp\langle A^{\top}z,\cdot\rangle d(\rho-\eta)}{\int \exp\langle A^{\top}z,\cdot\rangle d\rho\int \exp\langle A^{\top}z,\cdot\rangle d\eta} 
\end{align*}
where in the last line we have added and subtracted $\int B(\cdot)\exp\langle A^{\top}z,\cdot\rangle d\rho \int \exp\langle A^{\top}z,\cdot\rangle d\rho$ from the numerator.
Taking the norm on both sides, using the linearity of the integral, and using the fact that $\|Bw\|\leq \|B\|\|w\|$ for any $w\in\mathbb R^m$,
\begin{equation*}
    \begin{aligned}
    &\left\|
\frac{\int B(\cdot)\exp\langle A^{\top}z,\cdot\rangle d\rho}{\int \exp\langle A^{\top}z,\cdot\rangle d\rho}- \frac{\int B(\cdot)\exp\langle A^{\top}z,\cdot\rangle d\eta}{\int \exp\langle A^{\top}z,\cdot\rangle d\eta}\right\| \\[0.5em]
=& \left\Vert\frac{\int B(\cdot)\exp\langle A^{\top}z,\cdot\rangle d\rho\int \exp\langle A^{\top}z,\cdot\rangle d(\eta\mspace{-2.2mu}-\mspace{-2.2mu}\rho)\mspace{-2.2mu}+\mspace{-2.2mu}\int  \exp\langle A^{\top}z,\cdot\rangle d\rho\int B(\cdot) \exp\langle A^{\top}z,\cdot\rangle d(\rho\mspace{-2.2mu}-\mspace{-2.2mu}\eta)}{\int \exp\langle A^{\top}z,\cdot\rangle d\rho\int \exp\langle A^{\top}z,\cdot\rangle d\eta} \right \Vert \\[0.5em]
\leq& \frac{\Vert B \Vert}{\int \exp\langle A^{\top}z, \cdot \rangle d\eta}\left \Vert \frac{\int (\cdot)\exp\langle A^{\top}z, \cdot \rangle d\rho}{\int \exp\langle A^{\top}z, \cdot \rangle d\rho} \right \Vert \left\vert \int \exp\langle A^{\top}z,\cdot\rangle d(\eta-\rho) \right\vert  \\
& \hspace{6.8cm} + \frac{\Vert B \Vert}{\int \exp\langle A^{\top}z, \cdot \rangle d\eta}  \left\|\int(\cdot) \exp\langle A^{\top}z,\cdot\rangle d(\rho-\eta)\right\|,
\end{aligned}
\end{equation*}
which  corresponds to item \ref{eq:normBound}. 

On the other hand, item \ref{eq:gammaexpectation} follows from a direct application of Jensen's inequality, whereby $\|\mathbb E_{\gamma_{\rho,z}}[X]\|\leq \mathbb E_{\gamma_{\rho,z}}[\|X\|]\leq |\spt(\rho)|_{\infty}$, where the latter inequality is a consequence of the definition of  $\gamma_{\rho,z}$ in \eqref{eq:defGamma} which asserts that $\spt(\gamma_{\rho,z})\subset \spt(\rho)$.

For the  estimates in item \ref{eq:kappaBounds}, we first observe that for any $u\in\spt(\eta)$, we have that
$\exp\left(-\|A\|\|z\||\spt(\eta)|_{\infty}\right)\leq 
        \exp\langle A^{\top} z,u \rangle \leq \exp\left(\|A\|\|z\||\spt(\eta)|_{\infty}\right).$
    Integrating this inequality gives
    \begin{equation*}
        \exp\left(-\|A\|\|z\||\spt(\eta)|_{\infty}\right)\leq 
        \int \exp\langle A^{\top} z,\cdot \rangle d\eta \leq \exp\left(\|A\|\|z\||\spt(\eta)|_{\infty}\right).
    \end{equation*}
    We conclude that $\zeta(B,\eta,z) = \|B\|\left(\int \exp\langle A^{\top}z,\cdot\rangle d\eta\right)^{-1}\leq \|B\|\exp\left(\|A\|\|z\||\spt(\eta)|_{\infty}\right)$. Together with item \ref{eq:gammaexpectation}, this also shows that
    \[
    \kappa(B,\rho,\eta,z)=
    \|\mathbb E_{\gamma_{\rho,z}}[X]\|\zeta(B,\eta,z)\leq \|B\||\spt(\rho)|_{\infty}\exp\left(\|A\|\|z\||\spt(\eta)|_{\infty}\right).
    \] 
\end{proof}
The second technical lemma establishes the existence of subgradients at certain points, which is used when carrying out item 1 in the proof outline.

\begin{lemma}[Nonempty subdifferential]\label{lem:nonemptySubdifferentials} Fix $\rho,\eta \in\mathcal P(\mathcal X)$ and let $z_{\rho},z_{\eta}$ be solutions to the dual MEM problem with prior $\rho$ and $\eta$. Then, $\partial \phi_{\rho}(z_{\eta})$ is nonempty and, in particular, 
\begin{equation}
    A \nabla L_{\rho}(A^{\top}z_{\eta}) - A\nabla L_{\eta}(A^{\top}z_{\eta}) \in \partial \phi_{\rho}(z_{\eta}). \label{eqn:particularSubdiffElement}
\end{equation} 
\end{lemma}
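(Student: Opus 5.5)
The plan is to use the first-order optimality condition for $z_{\eta}$ together with the subdifferential sum rule for $\phi_{\rho}$. Write $h(z):=\alpha g^{*}(-z/\alpha)-\langle b,z\rangle$. Then $\phi_{\rho}=h+L_{\rho}\circ A^{\top}$ and $\phi_{\eta}=h+L_{\eta}\circ A^{\top}$.

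\textbf{Step 1 (smooth part).} By \cref{lem:PropertiesOfLogMGF}(i), $L_{\rho}$ and $L_{\eta}$ are finite-valued and continuously differentiable on all of $\R^m$. The chain rule therefore gives $\nabla (L_{\rho}\circ A^{\top})(z)=A\nabla L_{\rho}(A^{\top}z)$ for every $z$, and likewise for $\eta$.

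\textbf{Step 2 (sum rule).} The function $h$ is proper, lsc and convex, since $g^*$ is proper lsc convex under \eqref{eq:ass}. Adding a finite differentiable convex function to $h$ preserves the subdifferential sum rule, so
\[
\partial\phi_{\rho}(z)=\partial h(z)+A\nabla L_{\rho}(A^{\top}z)
\]
for all $z$, and the same identity holds with $\eta$ in place of $\rho$. This follows from \cite{rockafellar1970convex} (Theorem 23.8), because $\dom(L_{\rho}\circ A^{\top})=\R^d$ meets the relative interior of $\dom h$. Both sides are empty exactly when $z\notin\dom\partial h$.

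\textbf{Step 3 (optimality of $z_{\eta}$).} Since $z_{\eta}$ minimizes the convex function $\phi_{\eta}$, Fermat's rule gives $0\in\partial\phi_{\eta}(z_{\eta})$. By Step 2 this means $-A\nabla L_{\eta}(A^{\top}z_{\eta})\in\partial h(z_{\eta})$, so in particular $\partial h(z_{\eta})\neq\emptyset$.

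\textbf{Step 4 (conclusion).} Applying the sum rule for $\phi_{\rho}$ at $z_{\eta}$ gives
\[
A\nabla L_{\rho}(A^{\top}z_{\eta})-A\nabla L_{\eta}(A^{\top}z_{\eta})\in\partial h(z_{\eta})+A\nabla L_{\rho}(A^{\top}z_{\eta})=\partial\phi_{\rho}(z_{\eta}).
\]
This yields \eqref{eqn:particularSubdiffElement} and hence nonemptiness.

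The only delicate point is justifying the sum rule when $h$ may be extended-valued, since $g^*$ can have a proper domain. This is handled by the fact that the other summand is finite and differentiable everywhere, so no constraint qualification beyond $\dom h\neq\emptyset$ is needed. Existence of $z_{\eta}$ is already guaranteed by strong convexity of $\phi_\eta$.
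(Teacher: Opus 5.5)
Your proposal is correct and follows essentially the same route as the paper: the sum rule (Rockafellar, Theorem 23.8) justified by the full domain of the log-moment generating term, Fermat's rule at $z_{\eta}$ to produce a subgradient of the nonsmooth part, and then transplanting that subgradient into $\partial\phi_{\rho}(z_{\eta})$. The only difference is cosmetic: you absorb $-\langle b,\cdot\rangle$ into $h$, whereas the paper writes the nonsmooth part as $-\partial g^{*}(-z/\alpha)-b$ explicitly.
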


\begin{proof}
First, observe that  $\mathrm{rint}( \dom (L_{\eta} \circ A^{\top} - \langle b,\cdot \rangle) )  = \R^{d}$, and as both $g^{*}$ and $L_{\rho} \circ A^{\top} - \langle b,\cdot \rangle$ are proper convex functions, the sum rule \cite[Theorem 23.8]{rockafellar1970convex}
\begin{equation*}
        \partial \phi_{\eta}(z)  =  \partial( \alpha g^{*}( -z/\alpha) ) + \partial\left( -\langle b, z \rangle+ L_{\eta}(A^{\top}z)\right) =   - \partial g^{*}( -z/\alpha) -b +A\nabla L_{\eta}(A^{\top}z), \label{eqn:nonempty}
    \end{equation*}
    holds for each $z\in\mathbb R^d$,
    where the final equality uses the differentiability of $L_{\eta}$ from \cref{lem:PropertiesOfLogMGF}. At the minimizer $z_{\eta}$, the first-order optimality condition reads
    \begin{equation*}
        0 \in \partial \phi_{\eta}(z_{\eta}) = - \partial g^{*}( -z_{\eta}/\alpha) -b +A \nabla L_{\eta}(A^{\top} z_{\eta}),
    \end{equation*}
    and hence
    \begin{equation*}
        b-A \nabla L_{\eta}(A^{\top}z_{\eta}) \in -\partial  g^{*}( -z_{\eta}/\alpha).
    \end{equation*}
    For any other measure $\rho$, the sum rule then gives 
    $
    \partial \phi_{\rho}(z_{\eta}) = -\partial g^{*}( -z_{\eta}/\alpha) -b +A \nabla L_{\rho}(A^{\top} z_{\eta}),
    $
    which is nonempty as it contains, e.g., $A \nabla L_{\rho}(A^{\top}z_{\eta}) - A\nabla L_{\eta}(A^{\top}z_{\eta}) $ as follows from the previous display.  
\end{proof}

With the technical results of \cref{lem:elementaryBounds} and \cref{lem:nonemptySubdifferentials} in hand, we now move to analyzing the stability properties of dual solutions of the MEM problem under perturbations of the prior measure which will naturally lead to the claimed statistical rates. 

\subsection{Stability and rates for dual solutions}
As noted in the proof outline,
our approach relies on the $\beta/\alpha$-strong convexity of the dual objective, which follows from \eqref{eq:ass}. In effect, it enables us to bound the difference between solutions for different priors in terms of the norm of a subgradient of one of the objectives and to obtain \emph{a priori} estimates on the norm of dual solutions as stated next.

\begin{theorem}[Estimates for dual solutions]
\label{thm:stabilityDual}
    Fix $\rho,\eta \in\mathcal P(\mathcal X)$, and let $z_{\rho},z_{\eta}$ be solutions to the dual MEM problem with prior $\rho$ and $\eta$, respectively. Then, 
    \begin{enumerate}[label=(\roman*)]
    \item \label{eqn:normDifferenceBoundedBySubdiff} Letting $G= \partial \phi_{\eta}(z_{\rho}) \cup \partial\phi_{\rho}(z_{\eta})$, we have that
    \begin{equation*}
                \Vert z_{\rho} - z_{\eta} \Vert \leq\frac{\alpha}{\beta} \min_{v\in G} \Vert v \Vert.
    \end{equation*}
    \item \label{eqn:boundedNormOfSolutions} For any $z\in\mathbb R^d$, it holds that
    \begin{align*}
        \Vert z_{\eta} -z \Vert \leq \frac{\alpha}{\beta}\left[ \min_{v_{g} \in \partial g^{*}(-z/\alpha)} \Vert v_{g} \Vert +  \Vert b \Vert  + \Vert A \Vert \vert \spt (\eta) \vert_{\infty} \right],
    \end{align*}
    whereby $\|z_{\eta}\|\leq  \frac{\alpha}{\beta}\left[ \min_{v_{g} \in \partial g^{*}(-z/\alpha)} \Vert v_{g} \Vert +  \Vert b \Vert  + \Vert A \Vert \vert \spt (\eta) \vert_{\infty} \right]+\|z\|.$
    \end{enumerate}
\end{theorem}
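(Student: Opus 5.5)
Both parts rest on the $\frac{\beta}{\alpha}$-strong convexity of $\phi_\rho$ and $\phi_\eta$, used through the strong monotonicity of their subdifferentials, \eqref{eqn:strongConvexityMonotoneSubdiff} in \Cref{prop:strongConvexity}. For \ref{eqn:normDifferenceBoundedBySubdiff}, first note that $G$ is nonempty. \Cref{lem:nonemptySubdifferentials} gives $\partial\phi_\rho(z_\eta)\neq\emptyset$, and by symmetry $\partial\phi_\eta(z_\rho)\neq\emptyset$. Each subdifferential is closed and convex, so its minimal-norm element exists, and the minimum over $G$ is attained.

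Take $v\in\partial\phi_\rho(z_\eta)$. Since $z_\rho$ minimizes $\phi_\rho$, we have $0\in\partial\phi_\rho(z_\rho)$. Strong monotonicity of $\partial\phi_\rho$ gives
\[
\langle v-0, z_\eta-z_\rho\rangle\geq \tfrac{\beta}{\alpha}\|z_\eta-z_\rho\|^2 .
\]
Cauchy–Schwarz then yields $\|z_\eta-z_\rho\|\leq\frac{\alpha}{\beta}\|v\|$; the inequality is trivial when $z_\eta=z_\rho$. The symmetric argument applies to $v\in\partial\phi_\eta(z_\rho)$, using $0\in\partial\phi_\eta(z_\eta)$. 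Taking the minimum over $v\in G$ proves \ref{eqn:normDifferenceBoundedBySubdiff}.

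For \ref{eqn:boundedNormOfSolutions}, use the same inequality with an arbitrary $z$ in place of $z_\rho$: for any $v\in\partial\phi_\eta(z)$, monotonicity against $0\in\partial\phi_\eta(z_\eta)$ gives $\|z_\eta-z\|\leq\frac{\alpha}{\beta}\|v\|$.

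The subdifferential comes from the sum rule computation in the proof of \Cref{lem:nonemptySubdifferentials}:
\[
\partial\phi_\eta(z)=-\partial g^*(-z/\alpha)-b+A\nabla L_\eta(A^\top z).
\]
This is nonempty for every $z$: $g$ is finite and $\frac1\beta$-smooth, so $g^*$ is $\beta$-strongly convex, and hence $\partial g^*$ is nonempty on the domain of $g^*$. If $-z/\alpha\notin\dom g^*$, the minimum over the empty set is $+\infty$ and the bound is vacuous, so one can read it with that convention.

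Choose $v=-v_g-b+A\nabla L_\eta(A^\top z)$ with $v_g\in\partial g^*(-z/\alpha)$ of minimal norm. The triangle inequality gives
\[
\|v\|\leq\|v_g\|+\|b\|+\|A\|\,\|\nabla L_\eta(A^\top z)\| .
\]
By \eqref{eqn:GradLogMGF}, $\nabla L_\eta(A^\top z)=\mathbb E_{\gamma_{\eta,z}}[X]$. By \Cref{lem:elementaryBounds}\ref{eq:gammaexpectation}, this has norm at most $|\spt(\eta)|_\infty$. This gives the first estimate, and the bound on $\|z_\eta\|$ follows from the triangle inequality $\|z_\eta\|\leq\|z_\eta-z\|+\|z\|$.

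No step here is a serious obstacle. The only care needed is the bookkeeping on the domain of $g^*$ and on existence of subgradients, and the paper has already handled the latter via the sum rule.
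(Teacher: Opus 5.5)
Your proof is correct and follows the paper's argument essentially step for step. For (i), you test strong monotonicity of $\partial\phi_\rho$ (resp.\ $\partial\phi_\eta$) against $0$ at the minimizer; for (ii), you combine the sum-rule formula for $\partial\phi_\eta(z)$ with $\|\nabla L_\eta(A^\top z)\|=\|\mathbb E_{\gamma_{\eta,z}}[X]\|\le|\spt(\eta)|_\infty$.

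One side remark is false, though harmless. Strong convexity of $g^*$ does not make $\partial g^*$ nonempty on all of $\dom g^*$: for the radial loss $g=\sqrt{1+\|\cdot\|^2}$, $\dom g^*$ is the closed unit ball, but $\partial g^*$ is empty on its boundary. So the case split should be on whether $\partial g^*(-z/\alpha)$ is empty, not on whether $-z/\alpha\in\dom g^*$, exactly as the paper does. Your $+\infty$ convention for the minimum over an empty set already covers that case.
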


\begin{proof} 
   \ref{eqn:normDifferenceBoundedBySubdiff}: First by \cref{lem:nonemptySubdifferentials}, there exists $v \in \partial \phi_{\eta}(z_{\rho})$, and $w \in \partial \phi_{\eta}(z_{\eta})$. Then, as $\phi_{\eta}$ is $\beta/\alpha$-strongly convex, by \cref{eqn:strongConvexityMonotoneSubdiff}
    \begin{equation*}
        \Vert z_{\rho} - z_{\eta} \Vert^{2} \leq \frac{\alpha}{\beta}\langle v - w , z_{\rho} - z_{\eta} \rangle \leq \frac{\alpha}{\beta} \Vert  v - w \Vert \Vert z_{\rho} - z_{\eta} \Vert,
    \end{equation*}
    so that $\Vert z_{\rho} - z_{\eta} \Vert \leq \frac \alpha\beta\Vert v - w \Vert $. In particular, as $z_{\eta}$ is a minimizer of $\phi_{\eta}$, we may choose $w = 0 \in \partial \phi_{\eta}(z_{\eta})$. Hence 
    \begin{equation*}
        \Vert z_{\rho} - z_{\eta} \Vert \leq \frac{\alpha}{\beta}\Vert v \Vert\quad \forall v \in \partial \phi_{\eta}(z_{\rho}).
    \end{equation*}
    {As $\phi_{\rho}$ is also $\beta/\alpha$-strongly convex, we may repeat this argument with reversed roles of $\eta$ and $\rho$ to find
    \[
    \|z_\rho-z_\eta\|\leq \frac{\alpha}{\beta}\|v\| \quad \forall v\in \partial\phi_{\rho}(z_\eta)
    \]
    Combining the last  two inequalities, we find
    \begin{equation*}
        \Vert z_{\rho} - z_{\eta} \Vert \leq\frac{\alpha}{\beta} \inf_{v\in G} \Vert v \Vert,
    \end{equation*}
    for $G$ as defined above.} Finally, as $G$ is closed and $\Vert \cdot \Vert$ has compact level sets, the infimum is attained.
    \smallskip

    \noindent
    \ref{eqn:boundedNormOfSolutions}: By the subdifferential sum rule \cite[Theorem 23.8]{rockafellar1970convex},
    \begin{equation*}
        \partial \phi_{\eta}(z)  =   -\partial g^{*}( -z/\alpha) -b +A\nabla L_{\eta}(A^{\top}z),\forall z\in\mathbb R^d.
    \end{equation*}
    We split the proof into two cases: first, if $\partial \phi_{\eta}(z) \neq \emptyset$, for each $v \in \partial \phi_{\eta}(z)$, and for the particular point $0 \in \partial \phi_{\eta}(z_{\eta})$, we have (by the same arguments as above)  that
    \begin{equation*}
         \Vert z_{\eta} - z \Vert \leq  \frac{\alpha}{\beta} \Vert v \Vert. 
    \end{equation*} 
 Since $v \in \partial \phi_{\eta}(z)$, it is of the form $v = -v_{g} -b +A \nabla  L_{\eta}(A^{\top}z)$ for some $v_{g} \in \partial g^{*}( -z/\alpha)$, so
    \begin{align*}
        \Vert z_{\eta} -z \Vert &\leq\frac{\alpha}{\beta} \min_{v_{g} \in \partial g^{*}(-z/\alpha)} \left \Vert -v_{g}-b + A \nabla L_{\eta}( A^{\top}z) \right\Vert\\
        &=\frac{\alpha}{\beta} \min_{v_{g} \in \partial g^{*}(-z/\alpha)} \left \Vert -v_{g}-b + A \E_{\gamma_{\eta,z}}[X] \right\Vert \\
        &\leq \frac{\alpha}{\beta}\left[ \min_{v_{g} \in \partial g^{*}(-z/\alpha)} \Vert v_{g} \Vert +  \Vert b \Vert  + \Vert A \Vert \cdot \vert \spt (\eta) \vert_{\infty} \right],
    \end{align*} 
    where the penultimate line uses the property $\nabla L_{\eta}(A^{\top}z) = \E_{\gamma_{\eta,z}}[X]$ from \eqref{eqn:expectationgamma} and where the last line follows from \cref{lem:elementaryBounds} \ref{eq:gammaexpectation} and the triangle inequality. Once again, we emphasize that the minimum over subgradients is attained. 
    
   In the case that $\partial \phi_{\eta}(z) = \emptyset$, then also $ -\partial g^{*}( -z/\alpha)= \partial \phi_{\eta} (z) + b -A \nabla  L_{\eta}(A^{\top}z) = \emptyset$. Therefore the upper bound (ii) is a minimization over the empty set and takes the value $+\infty$ so that the result holds vacuously.
\end{proof}

\begin{remark}[On solution estimates]
    As stated, the bound on the norm of solutions in \cref{thm:stabilityDual} (ii) may be vacuous depending on the choice of evaluation point $z\in\mathbb R^d$. Indeed, as stated in the proof of that result, $\partial g^{\ast}(-z/\alpha)$ may be empty.

    Nevertheless, we point out that  $\dom \partial g^*=\left\{u: \partial g^*(u)\neq \emptyset\right\}$ is non-empty as $g^*$ is proper, see, e.g.,  \cite[Corollary 23.4]{rockafellar1970convex} (and use the fact that $g^*$ is proper).

\end{remark}

We now show that the upper bound on $\|z_{\rho}-z_{\eta}\|$ furnished in \cref{thm:stabilityDual} can be rewritten in terms of an expression involving the differences between the measures $\eta$ and $\rho$.  In the sequel, it will be expedient to establish the following shorthand notation. Given a matrix $B$, and measures $\rho, \eta \in \mathcal{P}(\mathcal{X})$ we define
\begin{align*}
     \kappa'(B,\rho,\eta)&:=\max\left\{\kappa(B,\rho,\eta,z_{\eta}),\kappa(B,\eta,\rho,z_{\eta})\right\}, \nonumber \\
     \zeta'(B,\rho,\eta)&:=\max\left\{\zeta(B,\rho,z_{\eta}),\zeta(B,\eta,z_{\eta})\right\}, \label{eqn:primeConstDefn}
\end{align*}
where $
        \zeta(B,\eta,z)=\|B\|\left(\int \exp\langle A^{\top}z,\cdot\rangle d\eta\right)^{-1}$ and  $\kappa(B,\rho,\eta,z)=\|\mathbb E_{\gamma_{\rho,z}}[X]\|\zeta(B,\eta,z),$  as defined in \cref{lem:elementaryBounds} (i). 
\begin{corollary}[Stability of dual solutions]\label{lem:StabilityDualGradientBound}
 Fix $\rho,\eta \in\mathcal P(\mathcal X)$ and let $z_{\rho},z_{\eta}$ solve the dual MEM problem with prior $\rho$ and $\eta$, respectively.  Then
     \[
        \left\|z_{\eta}\mspace{-2mu}-\mspace{-2mu} z_{\rho} \right\|\leq \frac{\alpha}{\beta}\kappa'(A,\rho,\eta) \left|\int \exp\langle A^{\top}z_{\eta},\cdot\rangle d(\eta\mspace{-2mu}-\mspace{-2mu}\rho)\right|+\frac{\alpha}{\beta} \zeta'(A,\rho,\eta)\left\|\int(\cdot) \exp\langle A^{\top}z_{\eta},\cdot\rangle d(\rho\mspace{-2mu}-\mspace{-2mu}\eta)\right\|.
     \]
\end{corollary}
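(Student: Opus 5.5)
We combine \cref{thm:stabilityDual}~\ref{eqn:normDifferenceBoundedBySubdiff} with the explicit subgradient produced by \cref{lem:nonemptySubdifferentials}, and then bound its norm using \cref{lem:elementaryBounds}. Concretely, by \cref{lem:nonemptySubdifferentials} the vector
\[
v:=A\nabla L_{\rho}(A^{\top}z_{\eta})-A\nabla L_{\eta}(A^{\top}z_{\eta})
\]
belongs to $\partial\phi_{\rho}(z_{\eta})\subset G$. Hence \cref{thm:stabilityDual}~\ref{eqn:normDifferenceBoundedBySubdiff} gives $\|z_{\eta}-z_{\rho}\|\leq \frac{\alpha}{\beta}\|v\|$. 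It therefore suffices to bound $\|v\|$ by
\[
\kappa'(A,\rho,\eta)\left|\int \exp\langle A^{\top}z_{\eta},\cdot\rangle d(\eta-\rho)\right|+\zeta'(A,\rho,\eta)\left\|\int(\cdot)\exp\langle A^{\top}z_{\eta},\cdot\rangle d(\rho-\eta)\right\|.
\]

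Next we use the gradient formula \eqref{eqn:GradLogMGF}. Since $A$ is linear and may be pulled inside the integrals, we can write
\[
v=\frac{\int A(\cdot)\exp\langle A^{\top}z_{\eta},\cdot\rangle d\rho}{\int \exp\langle A^{\top}z_{\eta},\cdot\rangle d\rho}-\frac{\int A(\cdot)\exp\langle A^{\top}z_{\eta},\cdot\rangle d\eta}{\int \exp\langle A^{\top}z_{\eta},\cdot\rangle d\eta}.
\]
This is exactly the left-hand side of \cref{lem:elementaryBounds}~\ref{eq:normBound} with $B=A$ and $z=z_{\eta}$. That lemma yields
\[
\|v\|\leq \kappa(A,\rho,\eta,z_{\eta})\left|\int \exp\langle A^{\top}z_{\eta},\cdot\rangle d(\eta-\rho)\right|+\zeta(A,\eta,z_{\eta})\left\|\int(\cdot)\exp\langle A^{\top}z_{\eta},\cdot\rangle d(\rho-\eta)\right\|.
\]

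Finally, by definition $\kappa(A,\rho,\eta,z_{\eta})\leq\kappa'(A,\rho,\eta)$ and $\zeta(A,\eta,z_{\eta})\leq\zeta'(A,\rho,\eta)$, because each primed constant is a maximum that includes the corresponding unprimed one. Multiplying by $\alpha/\beta$ then gives the claimed estimate. The second entries in the maxima defining $\kappa'$ and $\zeta'$ (with the roles of $\rho,\eta$ swapped) are not needed for this particular choice of subgradient. They presumably make the constant symmetric in the two measures for later use, and including them in the maximum does no harm.

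The only point needing care is the bookkeeping in the previous two paragraphs. One must confirm that the orientation of $v$ (a $\rho$-term minus an $\eta$-term) matches the ordering in \cref{lem:elementaryBounds}, so that the constants come out as $\kappa(A,\rho,\eta,z_\eta)$ and $\zeta(A,\eta,z_\eta)$. One must also check that the signed measures $\eta-\rho$ and $\rho-\eta$ appear as in the statement. Since the first term enters through an absolute value and the second through a norm, any sign discrepancy is immaterial. Beyond this there is no genuine obstacle: the proof is a direct composition of the three earlier results.
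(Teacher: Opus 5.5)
Your proof is correct and follows the paper's route: you apply \cref{thm:stabilityDual}~(i) to the explicit subgradient $v=A\nabla L_{\rho}(A^{\top}z_{\eta})-A\nabla L_{\eta}(A^{\top}z_{\eta})\in\partial\phi_{\rho}(z_{\eta})$ from \cref{lem:nonemptySubdifferentials}, and then bound $\|v\|$ by \cref{lem:elementaryBounds}~(i) with $B=A$ and $z=z_{\eta}$. The paper additionally bounds $\|-v\|$ with $\rho$ and $\eta$ swapped before passing to the maxima $\kappa'$ and $\zeta'$; as you note, this step is redundant, because $\kappa'(A,\rho,\eta)$ and $\zeta'(A,\rho,\eta)$ already dominate $\kappa(A,\rho,\eta,z_{\eta})$ and $\zeta(A,\eta,z_{\eta})$.
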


\begin{proof}
We first apply \cref{thm:stabilityDual}, which asserts that 
$    \Vert z_{\eta} - z_{\rho} \Vert \leq \frac{\alpha}{\beta} \min_{v 
\in \partial \phi_{\rho}(z_{\eta}) \cup \partial \phi_{\eta}(z_{\rho})} \Vert v \Vert.$
Taking $v = A \nabla L_{\rho}(A^{\top}z_{\eta}) - A\nabla L_{\eta}(A^{\top}z_{\eta}) \in \partial \phi_{\rho}(z_{\eta})$ from \cref{lem:nonemptySubdifferentials} \eqref{eqn:particularSubdiffElement},
\begin{equation}
    \Vert z_{\eta} - z_{\rho} \Vert \leq \frac{\alpha}{\beta}\Vert A \nabla L_{\rho}(A^{\top}z_{\eta}) - A\nabla L_{\eta}(A^{\top}z_{\eta}) \Vert ,\label{eqn:intermediateSubdiffBound}
\end{equation}
and it remains to bound the right-hand side. Recalling the expressions for $\nabla L_{\rho}$ and $\nabla L_{\eta}$ from \eqref{eqn:GradLogMGF}, we then apply \cref{lem:elementaryBounds} \ref{eq:normBound} with $B=A$ and $z = z_{\eta}$ to obtain that
        \begin{align*}
            \| v \| &= \Vert A \nabla L_{\rho}(A^{\top}z_{\eta}) - A\nabla L_{\eta}(A^{\top}z_{\eta}) \Vert \\
            &=\left\Vert \frac{\int A(\cdot)\exp\langle A^{\top}z_{\eta},\cdot\rangle d\rho}{\int \exp\langle A^{\top}z_{\eta},\cdot\rangle d\rho}- \frac{\int A(\cdot)\exp\langle A^{\top}z_{\eta},\cdot\rangle d\eta}{\int \exp\langle A^{\top}z_{\eta},\cdot\rangle d\eta} \right \Vert \\
            &\leq \kappa(A,\rho,\eta,z_{\eta}) \left|\int \exp\langle A^{\top}z_{\eta},\cdot\rangle d(\eta-\rho)\right|+\zeta(A,\eta,z_{\eta})\left\|\int(\cdot) \exp\langle A^{\top}z_{\eta},\cdot\rangle d(\rho-\eta)\right\|.
        \end{align*}
        
\noindent Repeating this computation with $-v$ in place of $v$, we apply \cref{lem:elementaryBounds} \ref{eq:normBound} once more $B=A$ and $z= z_{\eta}$ with the roles of $L_{\rho}$ and $L_{\eta}$ interchanged. This gives
\begin{align*}
\| v \|
&= \Vert - v \Vert\\
&=\Vert  A\nabla L_{\eta}(A^{\top}z_{\eta}) - A\nabla L_{\rho}(A^{\top}z_{\eta}) \Vert \\
&\leq \left\Vert \frac{\int A(\cdot)\exp\langle A^{\top}z_{\eta},\cdot\rangle d\eta}{\int \exp\langle A^{\top}z_{\eta},\cdot\rangle d\eta}- \frac{\int A(\cdot)\exp\langle A^{\top}z_{\eta},\cdot\rangle d\rho}{\int \exp\langle A^{\top}z_{\eta},\cdot\rangle d\rho} \right \Vert\\
&\leq \kappa(A,\eta,\rho,z_{\eta}) \left|\int \exp\langle A^{\top}z_{\eta},\cdot\rangle d(\rho -\eta)\right|+\zeta(A,\rho,z_{\eta})\left\|\int(\cdot) \exp\langle A^{\top}z_{\eta},\cdot\rangle d(\eta-\rho)\right\|.
\end{align*}
Finally, as $\vert \int \exp\langle A^{\top}z_{\eta},\cdot\rangle d(\eta-\rho) \vert = \vert \int \exp\langle A^{\top}z_{\eta},\cdot\rangle d(\rho-\eta) \vert$, and $\Vert \int (\cdot)\exp\langle A^{\top}z_{\eta},\cdot\rangle d(\eta-\rho) \Vert = \Vert \int (\cdot)\exp\langle A^{\top}z_{\eta},\cdot\rangle d(\rho-\eta) \Vert$, we take the maximum of these bounds and substitute into \eqref{eqn:intermediateSubdiffBound} which yields
\begin{align*}
    \Vert z_{\eta}\mspace{-2mu} -\mspace{-2mu}z_{\rho} \Vert &\leq\frac{\alpha}{\beta} \kappa'(A,\rho,\eta)\left|\int \exp\langle A^{\top}z_{\eta},\cdot\rangle d(\eta\mspace{-2mu}-\mspace{-2mu}\rho)\right|+\frac{\alpha}{\beta} \zeta'(A,\rho,\eta) \left\|\int(\cdot) \exp\langle A^{\top}z_{\eta},\cdot\rangle d(\rho\mspace{-2mu}-\mspace{-2mu}\eta)\right\|
\end{align*}
 with the constants $\kappa'(A,\rho,\eta), \zeta'(A,\rho,\eta)$ defined previously.
\end{proof}

In order to establish the desired parametric rate of convergence for the empirical dual MEM solutions, it remains to provide uniform bounds on the constants from \cref{lem:StabilityDualGradientBound}.
        
\begin{lemma}[Constant bounds] \label{lem:ConstantUpperBound}
    Let $\mu \in \mathcal{P}(\mathcal{X})$ and set $\hat \mu_n:=\frac 1n\sum_{i=1}^n\delta_{X_i}$ to be the empirical measure from $n$ i.i.d. samples $X_1,\dots, X_n$ from $\mu$. Then, with probability $1$,
    \begin{align*}
        \kappa'(B,\hat \mu_n,\mu )&\leq  \Vert B \Vert  \vert \spt(\mu) \vert_{\infty} \exp(\Vert A \Vert  \Vert z_{\mu} \Vert  \vert \spt(\mu) \vert_{\infty}), \\
         \zeta'(B,\hat \mu_{n},\mu)&\leq \Vert B \Vert  \exp(\Vert A \Vert  \Vert z_{\mu} \Vert  \vert \spt(\mu) \vert_{\infty}).
    \end{align*}
\end{lemma}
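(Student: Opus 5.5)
Recall that with $\rho=\hat\mu_n$ and $\eta=\mu$, the definitions read
\[
\kappa'(B,\hat\mu_n,\mu)=\max\{\kappa(B,\hat\mu_n,\mu,z_\mu),\kappa(B,\mu,\hat\mu_n,z_\mu)\},\qquad \zeta'(B,\hat\mu_n,\mu)=\max\{\zeta(B,\hat\mu_n,z_\mu),\zeta(B,\mu,z_\mu)\}.
\]
The point $z=z_\mu$ is deterministic here. So the plan is simply to apply \cref{lem:elementaryBounds}~\ref{eq:kappaBounds} to each of the four terms with $z=z_\mu$, and then replace every support radius involving $\hat\mu_n$ by $|\spt(\mu)|_\infty$.

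The only probabilistic input is the observation that $\spt(\hat\mu_n)=\{X_1,\dots,X_n\}\subset\spt(\mu)$ almost surely. Indeed, $\mu(\mathcal X\setminus\spt(\mu))=0$, so each $X_i\in\spt(\mu)$ with probability one. A countable (finite) union of null events is null, so with probability $1$ all samples lie in $\spt(\mu)$. On this event $|\spt(\hat\mu_n)|_\infty\le|\spt(\mu)|_\infty$.

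On that event the four bounds follow directly from \cref{lem:elementaryBounds}~\ref{eq:kappaBounds}:
\begin{align*}
\kappa(B,\hat\mu_n,\mu,z_\mu)&\le\|B\||\spt(\hat\mu_n)|_\infty\exp(\|A\|\|z_\mu\||\spt(\mu)|_\infty),\\
\kappa(B,\mu,\hat\mu_n,z_\mu)&\le\|B\||\spt(\mu)|_\infty\exp(\|A\|\|z_\mu\||\spt(\hat\mu_n)|_\infty),\\
\zeta(B,\hat\mu_n,z_\mu)&\le\|B\|\exp(\|A\|\|z_\mu\||\spt(\hat\mu_n)|_\infty),\\
\zeta(B,\mu,z_\mu)&\le\|B\|\exp(\|A\|\|z_\mu\||\spt(\mu)|_\infty).
\end{align*}
Using $|\spt(\hat\mu_n)|_\infty\le|\spt(\mu)|_\infty$ and the monotonicity of $\exp$, each right-hand side is dominated by the corresponding claimed bound. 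Taking maxima then gives the statement.

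There is no real obstacle. The one subtlety is the direction of the inequality: the lower bound on $\int\exp\langle A^\top z,\cdot\rangle\,d\hat\mu_n$ used inside \cref{lem:elementaryBounds} depends on $|\spt(\hat\mu_n)|_\infty$, and an upper bound by $|\spt(\mu)|_\infty$ is exactly what is needed there. This is also why the statement holds only almost surely rather than for every realization of the samples.
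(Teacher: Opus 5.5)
Your proposal is correct and follows the paper's proof essentially verbatim: the paper likewise applies \cref{lem:elementaryBounds}~\ref{eq:kappaBounds} with $z=z_\mu$ to the four terms and then uses $\spt(\hat\mu_n)\subset\spt(\mu)$ almost surely to replace $|\spt(\hat\mu_n)|_\infty$ by $|\spt(\mu)|_\infty$. Your explicit justification of that almost-sure inclusion, via $\mu(\mathcal X\setminus\spt(\mu))=0$ and a finite union of null events, is a detail the paper leaves implicit.
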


\begin{proof} \smallskip \noindent (i) First, from \cref{lem:elementaryBounds} \ref{eq:kappaBounds}, 
\begin{equation*}
\begin{aligned}
\kappa(B,\hat \mu_n,\mu, z_{\mu}) &\leq \Vert B \Vert \vert \spt(\hat{\mu}_{n}) \vert_{\infty}\exp(\Vert A \Vert  \Vert z_{\mu} \Vert  \vert \spt(\mu) \vert_{\infty}) \\
&\leq   \Vert B \Vert  \vert \spt(\mu) \vert_{\infty} \exp(\Vert A \Vert  \Vert z_{\mu} \Vert  \vert \spt(\mu) \vert_{\infty}),
\end{aligned}
\end{equation*} 
where the second inequality holds since $\spt (\hat{\mu}_{n})\subset \spt(\mu)$ with probability $1$. Similarly, we have:
\begin{equation*}
\begin{aligned}
     \kappa(B, \mu,\hat{\mu}_{n},z_{\mu}) &\leq \Vert B \Vert \vert \spt(\mu) \vert_{\infty}\exp(\Vert A \Vert  \Vert z_{\mu} \Vert  \vert \spt(\hat{\mu}_{n}) \vert_{\infty}) \\
     &\leq   \Vert B \Vert  \vert \spt(\mu) \vert_{\infty} \exp(\Vert A \Vert  \Vert z_{\mu} \Vert  \vert \spt(\mu) \vert_{\infty}).
\end{aligned}
\end{equation*}
It readily follows that 
\begin{align*}
    \kappa'(B,\hat \mu_n,\mu ) &= \max\{\kappa(B, \mu,\hat{\mu}_{n},z_{\mu}),\kappa(B, \hat{\mu}_{n},\mu,z_{\mu}) \} \\
    &\leq  \Vert B \Vert  \vert \spt(\mu) \vert_{\infty} \exp(\Vert A \Vert  \Vert z_{\mu} \Vert  \vert \spt(\mu) \vert_{\infty}).
\end{align*}
\smallskip \noindent (ii) As for item (i), applying \cref{lem:elementaryBounds} \ref{eq:kappaBounds} we have
\begin{equation*}
    \zeta(B,\hat \mu_n,z_{\mu}) \leq \Vert B \Vert  \exp(\Vert A \Vert  \Vert z_{\mu} \Vert  \vert \spt(\hat{\mu}_{n}) \vert_{\infty}) \leq \Vert B \Vert  \exp(\Vert A \Vert  \Vert z_{\mu} \Vert  \vert \spt(\mu) \vert_{\infty}),
    \end{equation*}
    where we have again used that $\spt(\hat{\mu}_{n}) \subset \spt(\mu)$. On the other hand, we have directly from \cref{lem:elementaryBounds} \ref{eq:kappaBounds} that
    \begin{equation*}
     \zeta(B,\mu,z_{\mu}) \leq \Vert B \Vert  \exp(\Vert A \Vert  \Vert z_{\mu} \Vert  \vert \spt(\mu) \vert_{\infty}).
    \end{equation*} 
    Hence, 
    $
        \zeta'(B,\hat \mu_{n},\mu) =\max \{\zeta(B,\mu,z_{\mu}) ,\zeta(B,\hat\mu_{n},z_{\mu})  \} \leq \Vert B \Vert  \exp(\Vert A \Vert  \Vert z_{\mu} \Vert  \vert \spt(\mu) \vert_{\infty}).
    $
    \end{proof}

With these preliminary results in hand, we now establish the parametric rate of convergence of empirical dual solutions as stated in \cref{thm:main}.

\begin{theorem}[Parametric rates for dual solutions]
\label{thm:parametricRatesDual} Fix $\mu\in\mathcal P(\mathcal X)$ and let $\hat \mu_n:=\frac 1n\sum_{i=1}^n\delta_{X_i}$  be the empirical measure from $n$ i.i.d. samples $X_1,\dots, X_n$ from $\mu$. Denoting $z_{\mu},z_{\hat \mu_{n}}$ as the solutions to the dual MEM problem with priors $\mu$ and $\hat{\mu}_{n}$, respectively, we have
\[
    \mathbb E[\|z_{\hat\mu_n}-z_{\mu}\|] \leq \frac{C}{\sqrt n},
\]
where $C = \frac{2\alpha}{\beta}\|A\||\spt(\mu)|_{\infty}\exp\left(2\|A\|\|z_{\mu}\||\spt(\mu)|_{\infty}\right)$. 
\end{theorem}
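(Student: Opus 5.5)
We apply \cref{lem:StabilityDualGradientBound} with $\eta=\mu$ and $\rho=\hat\mu_n$. The anchor point is then the deterministic vector $z_\mu$, and the bound reads
\[
\|z_{\hat\mu_n}-z_\mu\|\leq \frac{\alpha}{\beta}\kappa'(A,\hat\mu_n,\mu)\left|\int e^{\langle A^\top z_\mu,\cdot\rangle}d(\mu-\hat\mu_n)\right|+\frac{\alpha}{\beta}\zeta'(A,\hat\mu_n,\mu)\left\|\int(\cdot)e^{\langle A^\top z_\mu,\cdot\rangle}d(\hat\mu_n-\mu)\right\|.
\]
By \cref{lem:ConstantUpperBound} with $B=A$, almost surely $\kappa'\leq \|A\|R e^{\|A\|\|z_\mu\|R}$ and $\zeta'\leq \|A\|e^{\|A\|\|z_\mu\|R}$, where $R:=|\spt(\mu)|_\infty$. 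These constants are deterministic, so taking expectations reduces the problem to bounding the expected absolute deviation of two empirical means.

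Set $Y_i:=\exp\langle A^\top z_\mu,X_i\rangle$ and $W_i:=X_iY_i$. Since $z_\mu$ is fixed, the $Y_i$ are i.i.d. with mean $\int e^{\langle A^\top z_\mu,\cdot\rangle}d\mu$, and the $W_i$ are i.i.d. with mean $\int(\cdot)e^{\langle A^\top z_\mu,\cdot\rangle}d\mu$. By Jensen (Cauchy–Schwarz),
\[
\E\Big|\tfrac1n\textstyle\sum Y_i-\E Y_1\Big|\leq \sqrt{\var(Y_1)/n}\leq \sqrt{\E Y_1^2/n}.
\]
Similarly, $\E\|\frac1n\sum W_i-\E W_1\|\leq\sqrt{\tr\cov(W_1)/n}\leq\sqrt{\E\|W_1\|^2/n}$. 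Almost surely $X_i\in\spt(\mu)$, so $Y_i\leq e^{\|A\|\|z_\mu\|R}$ and $\|W_i\|\leq Re^{\|A\|\|z_\mu\|R}$. Both deviations are therefore at most $n^{-1/2}$ times these bounds.

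Combining, each of the two terms is at most $\frac{\alpha}{\beta}\|A\|Re^{2\|A\|\|z_\mu\|R}n^{-1/2}$. Summing the two gives exactly $C/\sqrt n$ with $C=\frac{2\alpha}{\beta}\|A\|Re^{2\|A\|\|z_\mu\|R}$.

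The only delicate point is choosing $\eta=\mu$ rather than $\eta=\hat\mu_n$. This makes the integrands depend on the deterministic $z_\mu$ instead of the random $z_{\hat\mu_n}$, so the summands are genuinely i.i.d. and the variance computation is legitimate. The a priori finiteness of $\|z_\mu\|$, given by \cref{thm:stabilityDual}, ensures that $C<\infty$.
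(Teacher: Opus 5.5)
Your proposal is correct and matches the paper's proof essentially step for step. The paper also applies the dual stability corollary with $\eta=\mu$ and $\rho=\hat\mu_n$, bounds $\kappa'$ and $\zeta'$ deterministically via the constant-bounds lemma with $B=A$, and controls the two empirical deviations by Jensen's inequality together with the variance and trace-of-covariance computations, arriving at the same constant $C$.

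One minor remark: $\|z_\mu\|<\infty$ holds automatically because $z_\mu\in\mathbb{R}^d$ exists by strong convexity. The a priori estimate from the earlier theorem only quantifies this norm and is not needed for $C<\infty$.
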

\begin{proof}
   We first apply \Cref{lem:StabilityDualGradientBound} with  $\rho = \hat{\mu}_{n}$ and $\eta = \mu$ to obtain that
   \begin{equation}\label{eq:intermediateExpectation}
   \begin{aligned}
         \|z_{\hat \mu_n}-z_{\mu}\| 
        &\leq\frac{\alpha}{\beta} \kappa'(A,\hat \mu_n,\mu)\left|\int \exp\langle A^{\top}z_{\mu},\cdot\rangle d(\mu-\hat\mu_n)\right|
       \\ 
       & 
        +\frac{\alpha}{\beta} \zeta'(A,\hat \mu_n,\mu)\left\|\int (\cdot)\exp\langle A^{\top}z_{\mu},\cdot\rangle d(\hat\mu_n - \mu)\right\|.      
  \end{aligned} 
   \end{equation}
   Given that $\kappa'(A,\hat \mu_n,\mu),\zeta'(A,\hat \mu_n,\mu)$ admit upper bounds which do not depend on the choice of samples by \cref{lem:ConstantUpperBound}
   we proceed by bounding the expected value of the two integral terms. 

   First, let $f=\exp\langle A^{\top}z_{\mu},\cdot\rangle$ and observe that 
   \begin{align*}
        \mathbb E\left[\left|\int fd\mu-\frac 1n \sum_{i=1}^nf(X_i)\right|\right]&\leq\sqrt{\mathbb E\left[\left|\int fd\mu-\frac 1n \sum_{i=1}^nf(X_i)\right|^2\right]}\\
        &=\sqrt{\mathbb E\left[\left| \frac{1}{n} \sum_{i=1}^{n} \int fd\mu-\frac 1n \sum_{i=1}^nf(X_i)\right|^2\right]} \\
        &=\sqrt{\frac 1{n^2}\var\left[\sum_{i=1}^nf(X_i)\right]},
   \end{align*}
   where the first line follows from Jensen's inequality, and we have used the fact that $\int f d\mu=\frac{1}{n}\sum_{i=1}^n\int fd\mu$ in the next equality. The final equality follows from setting $Y =\sum_{i=1}^nf(X_i)$ and applying the definition of the variance of $Y$, \eqref{eqn:VarCovMatrix}.
   As the samples $X_1,\dots,X_n$ are independent, \[\var\left[\sum_{i=1}^nf(X_i)\right]=n\var[f(X_1)]\leq n\mathbb E_{\mu}[f^2(X_1)]\leq n\exp\left(2\|A\| \|z_{\mu}\||\spt(\mu)|_{\infty}\right),\] 
   where we have used that $f(X_{1})^{2} = (\exp\langle A^{\top}z_{\mu}, X_{1}\rangle)^{2} \leq \exp\left(2\|A\| \|z_{\mu}\||\spt(\mu)|_{\infty}\right)$ in the final inequality.
Hence, we have shown 
\begin{equation}
    \mathbb E \left[\left|\int \exp\langle A^{\top}z_{\mu},\cdot\rangle d(\mu-\hat\mu_n)\right|\right]\leq \frac{1}{\sqrt n}\exp\left(\|A\| \|z_{\mu}\||\spt(\mu)|_{\infty}\right) \label{eqn:IntegralBound1}
\end{equation}

For the other term of \eqref{eq:intermediateExpectation}, define $h=(\cdot)\exp\langle A^{\top}z_{\mu},\cdot\rangle$. Then once more via Jensen's inequality, we have
\begin{align*}
        \mathbb E\left[\left\|\int hd\mu-\frac 1n \sum_{i=1}^n h(X_i)\right\|\right]
        &\leq\sqrt{\mathbb E\left[\left\|\int hd\mu-\frac 1n \sum_{i=1}^n h(X_i)\right\|^2\right]}
        \\
        &=\sqrt{\mathbb E\left[\left\| \frac{1}{n} \sum_{i=1}^n \left(\int hd\mu-h(X_i)\right)\right\|^2\right]}
        \\
        &
        = \sqrt{\frac 1{n^2}\mathrm{Tr}\left(\cov\left[\sum_{i=1}^n h(X_i)\right]\right)}.
\end{align*}
Here, the final equality can be observed by computing the covariance matrix of the random vector $Y= \sum_{i=1}^n h(X_i).$
Moreover, $\cov\left[\sum_{i=1}^n h(X_i)\right]=n\cov\left[h(X_1)\right]$ due to independence of the samples. Now, expanding the diagonal entries of the covariance matrix, \[
\mathrm{Tr}\left(\cov\left[h(X_1)\right]\right)=\mathbb E\left[\|h(X_1)-\mathbb E[h(X_1)]\|^2\right]\leq |\spt(\mu)|^2_{\infty}\exp(2\|A\|\|z_{\mu}\||\spt(\mu)|_{\infty}),
\]
where  the inequality follows from observing that 
\[
\mathbb E\left[\|h(X_1)-\mathbb E[h(X_1)]\|^2\right]=\mathbb E[\|h(X_1)\|^2]-\|\mathbb E[h(X_1)]\|^2
\leq \mathbb E\left[\|h(X_1)\|^2\right].
\]
Therefore, we have 
\begin{equation}
    \mathbb E\left[\left\|\int (\cdot)\exp\langle A^{\top}z_{\mu},\cdot\rangle d(\hat\mu_n - \mu)\right\|\right]\leq {\frac{1}{\sqrt n}}|\spt(\mu)|_{\infty}\exp\left(\|A\| \|z_{\mu}\||\spt(\mu)|_{\infty}\right).  \label{eqn:integralBound2}
\end{equation}
 Substituting \eqref{eqn:IntegralBound1} and \eqref{eqn:integralBound2} into \eqref{eq:intermediateExpectation} along with the bounds on $\kappa'(A,\hat \mu_n,\mu)$ and $\zeta'(A,\hat \mu_n,\mu)$ from \cref{lem:ConstantUpperBound} yields that  
\begin{equation*}
\mathbb E\left[\left\|z_{\hat \mu_n}-z_{\mu}\right\|\right]\leq \frac{1}{\sqrt n}\frac{2\alpha}\beta \Vert A \Vert  \vert \spt(\mu) \vert_{\infty} \exp\left(2\|A\| \|z_{\mu}\||\spt(\mu)|_{\infty}\right).
\end{equation*}
\end{proof}

The above result naturally generalizes to approximate dual solutions as stated next.

\begin{corollary}[Parametric rates for approximate dual solutions] In the setting of \cref{thm:parametricRatesDual}, let $z_{\hat \mu_n,\varepsilon}$ be a measurable selection of an $\varepsilon$-approximate minimizer of $\phi_{\hat \mu_n}$. Then, 
\[
    \mathbb E[\|z_{\hat \mu_n,\varepsilon}-z_{\mu}\|]\leq \frac{C}{\sqrt n} + \sqrt{\frac{2\alpha}{\beta}\varepsilon}.
\]
\end{corollary}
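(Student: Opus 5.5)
The plan is to split the error with the triangle inequality, pointwise on the sample space:
\[
\|z_{\hat \mu_n,\varepsilon}-z_{\mu}\| \leq \|z_{\hat \mu_n,\varepsilon}-z_{\hat\mu_n}\| + \|z_{\hat \mu_n}-z_{\mu}\|.
\]
Here $z_{\hat\mu_n}$ is the unique exact minimizer of $\phi_{\hat\mu_n}$. It exists and is unique because $\phi_{\hat\mu_n}$ is $\frac{\beta}{\alpha}$-strongly convex. It is measurable by \cref{rmk:measurableSelection} with $\varepsilon=0$, where no selection is needed. The selection $z_{\hat\mu_n,\varepsilon}$ is measurable by assumption. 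Both terms on the right are therefore random variables, and we may take expectations and use linearity.

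The second term is handled directly by \cref{thm:parametricRatesDual}: $\mathbb E[\|z_{\hat\mu_n}-z_\mu\|]\leq C/\sqrt n$, with the same constant $C$.

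For the first term, fix a realization of the samples and apply \cref{prop:strongConvexity}~ii) to the $\frac{\beta}{\alpha}$-strongly convex function $\phi_{\hat\mu_n}$ at its unique minimizer $z_{\hat\mu_n}$. This gives
\[
\frac{\beta}{2\alpha}\|z_{\hat\mu_n,\varepsilon}-z_{\hat\mu_n}\|^2 \leq \phi_{\hat\mu_n}(z_{\hat\mu_n,\varepsilon})-\phi_{\hat\mu_n}(z_{\hat\mu_n}) \leq \varepsilon ,
\]
where the last inequality is the definition of an $\varepsilon$-approximate minimizer together with $\inf\phi_{\hat\mu_n}=\phi_{\hat\mu_n}(z_{\hat\mu_n})$. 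Hence $\|z_{\hat\mu_n,\varepsilon}-z_{\hat\mu_n}\|\leq\sqrt{2\alpha\varepsilon/\beta}$ almost surely, and this deterministic bound passes through the expectation unchanged.

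Adding the two bounds gives the claim. No step is a real obstacle. The only point needing care is that $z_{\hat\mu_n,\varepsilon}$ lies in $\dom\phi_{\hat\mu_n}$, so that the objective values are finite and the strong-convexity inequality is meaningful. This holds because $\phi_{\hat\mu_n}(z_{\hat\mu_n,\varepsilon})\leq \inf\phi_{\hat\mu_n}+\varepsilon<\infty$.
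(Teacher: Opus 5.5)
Your proposal is correct and follows essentially the same route as the paper: a pointwise triangle inequality through the exact empirical minimizer $z_{\hat\mu_n}$, \cref{thm:parametricRatesDual} for the statistical term, and quadratic growth from the $\frac{\beta}{\alpha}$-strong convexity of $\phi_{\hat\mu_n}$ (using $0\in\partial\phi_{\hat\mu_n}(z_{\hat\mu_n})$) to bound the optimization term by $\sqrt{2\alpha\varepsilon/\beta}$. Your extra remarks on measurability and on the finiteness of $\phi_{\hat\mu_n}(z_{\hat\mu_n,\varepsilon})$ are harmless additions that the paper leaves implicit.
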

\begin{proof}
    By the triangle inequality,
    \[
        \mathbb E[\|z_{\hat \mu_n,\varepsilon}-z_{\mu}\|]\leq \mathbb E[\|z_{\hat \mu_n}-z_{\mu}\|]+\mathbb E[\|z_{\hat \mu_n,\varepsilon}-z_{\hat \mu_n}\|].
    \]
    The first term on the right-hand side is bounded by $Cn^{-1/2}$ by \cref{thm:parametricRatesDual}. Since $0\in\partial \phi_{\hat \mu_n}(z_{\hat \mu_n})$, \eqref{eqn:strongConvexVer2} implies that 
    \[
        \|z_{\hat \mu_n,\varepsilon}-z_{\hat \mu_n}\|^2\leq \frac{2\alpha}{\beta}\left( \phi_{\hat \mu_n}(z_{\hat \mu_n,\varepsilon})-\phi_{\hat \mu_n}(z_{\hat \mu_n})\right)\leq \frac{2\alpha}{\beta}\varepsilon,
    \]
    proving the claimed result.
\end{proof}
With this result in hand, we proceed to proving the corresponding convergence rate for the primal solutions. 

\subsection{Stability and rates for primal solutions}

To derive stability and convergence rate results for primal solutions, we now first establish  that the gradient of $L_{\rho}$ is Lipschitz continuous for any choice of $\rho\in\mathcal P(\mathcal X)$.

\begin{lemma}[Lipschitz continuity] \label{lem:LipshitzConstant}
    For any $\rho \in \mathcal{P}(\mathcal{X})$, $L_{\rho} \circ A^{\top}$ is $K$-smooth, where $ K= \vert \spt(\rho) \vert_{\infty}^2 \Vert A \Vert^2$. 
\end{lemma}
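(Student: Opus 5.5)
The plan is to compute the Hessian of $L_\rho\circ A^{\top}$ explicitly and bound its operator norm uniformly in $z$.

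\emph{Hessian formula.} By \cref{lem:PropertiesOfLogMGF}, $L_\rho$ is analytic on $\R^m$. Since $\mathcal X$ is compact, the integrand $u\mapsto \exp\langle y,u\rangle$ and its derivatives in $y$ are uniformly bounded on compact sets of $y$. Hence we may differentiate under the integral sign in \eqref{eqn:GradLogMGF}. Applying the quotient rule gives
\[
\nabla^2 L_\rho(y)=\E_{Q_y}[XX^{\top}]-\E_{Q_y}[X]\E_{Q_y}[X]^{\top}=\cov_{Q_y}(X),
\]
where $Q_y\in\mathcal P(\mathcal X)$ has density $\exp\langle y,\cdot\rangle/\int\exp\langle y,\cdot\rangle d\rho$ with respect to $\rho$. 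For $y=A^{\top}z$ this is exactly $\gamma_{\rho,z}$ from \eqref{eq:defGamma}. By the chain rule,
\[
\nabla^2(L_\rho\circ A^{\top})(z)=A\,\cov_{\gamma_{\rho,z}}(X)\,A^{\top}.
\]

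\emph{Uniform bound.} The covariance matrix is positive semidefinite. For any unit vector $w$,
\[
w^{\top}\cov_{\gamma_{\rho,z}}(X)\,w=\var(\langle w,X\rangle)\le \E_{\gamma_{\rho,z}}[\langle w,X\rangle^2]\le \E_{\gamma_{\rho,z}}[\|X\|^2]\le |\spt(\rho)|_\infty^2,
\]
using $\spt(\gamma_{\rho,z})\subset\spt(\rho)$, exactly as in \cref{lem:elementaryBounds}\ref{eq:gammaexpectation}. Therefore $\|\cov_{\gamma_{\rho,z}}(X)\|\le |\spt(\rho)|_\infty^2$. Consequently
\[
\|\nabla^2(L_\rho\circ A^{\top})(z)\|\le \|A\|^2|\spt(\rho)|_\infty^2=K
\]
for every $z\in\R^d$.

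\emph{From Hessian bound to Lipschitz gradient.} By the mean value theorem in integral form,
\[
\nabla(L_\rho\circ A^{\top})(z_1)-\nabla(L_\rho\circ A^{\top})(z_2)=\int_0^1\nabla^2(L_\rho\circ A^{\top})\big(z_2+t(z_1-z_2)\big)(z_1-z_2)\,dt,
\]
whose norm is at most $K\|z_1-z_2\|$. So $L_\rho\circ A^{\top}$ is $K$-smooth.

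The only step requiring care is justifying differentiation under the integral, which follows from dominated convergence because $\mathcal X$ is compact. The covariance bound itself is elementary. The same argument shows that $\nabla L_\rho$ itself is $|\spt(\rho)|_\infty^2$-Lipschitz, which may be useful for the subsequent primal estimates.
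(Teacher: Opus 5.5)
Your proposal is correct and follows essentially the same route as the paper: both identify $\nabla^2(L_\rho\circ A^{\top})(z)=A\,\cov_{\gamma_{\rho,z}}(X)\,A^{\top}$ and bound its quadratic form by $\|A\|^2|\spt(\rho)|_\infty^2$ using variance $\le$ second moment $\le |\spt(\rho)|_\infty^2$. The only differences are cosmetic. You bound the covariance's operator norm before multiplying by $A$ on both sides, whereas the paper applies Cauchy--Schwarz to $(U-\E[U])^{\top}A^{\top}w$ directly. You also write out the mean-value step from the Hessian bound to the Lipschitz gradient, which the paper leaves implicit.
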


\begin{proof}
We compute the first and second partial derivatives of  $L_{\rho}\circ A^{\top}$ below: 
\[
    \begin{aligned}
    \partial_i (L_{\rho}\circ A^{\top}):y\in\mathbb R^d&\mapsto \frac{\int \sum_{k=1}^mA_{ik}u_k\exp\langle y,Au\rangle d\rho(u)}{\int \exp\langle y,Au\rangle d\rho(u)},
    \\
    \\
    \partial_j\partial_i (L_{\rho}\circ A^{\top}):y\in\mathbb R^d&\mapsto \frac{\int \sum_{k=1}^m A_{ik}u_k\sum_{l=1}^mA_{jl}u_l\exp\langle y,Au\rangle d\rho(u)}{\int \exp\langle y,Au\rangle d\rho(u)},
\\
&-\frac{\int \sum_{k=1}^mA_{ik}u_k\exp\langle y,Au\rangle d\rho(u)\int \sum_{l=1}^mA_{jl}u_l\exp\langle y,Au\rangle d\rho(u)}{\left(\int \exp\langle y,Au\rangle d\rho(u)\right)^2}.
    \end{aligned}
\]
Consequently, the Hessian of $L_{\rho}\circ A^{\top}$ can be expressed as 
\[
    \nabla^2 (L_{\rho}\circ A^{\top})(y)=\mathbb E_{\gamma_{\rho,y}}\left[ AUU^{\top}A^{\top} \right]- \mathbb E_{\gamma_{\rho,y}}\left[ AU\right]\mathbb E_{\gamma_{\rho,y}}\left[ AU\right]^{\top}
\]
where $\gamma_{\rho,y}$ is as defined in \eqref{eq:defGamma} and $U\sim\gamma_{\rho,y}$. We can rewrite this expression as
\[
    \nabla^2 (L_{\rho}\circ A^{\top})(y)=A\mathbb E_{\gamma_{\rho,y}}\left[ \left(U-\mathbb E_{\gamma_{\rho,y}}\left[U\right]\right)\left(U-\mathbb E_{\gamma_{\rho,y}}\left[ U\right]\right)^{\top}\right]A^{\top},
\]
so that, for any $w\in\mathbb R^d$ with $\|w\|\leq 1$,
\[
    w^{\top}(\nabla^2(L_{\rho} \circ A^{\top})(y)) w= \mathbb E_{\gamma_{\rho,y}}\left[ \left\|\left(U-\mathbb E_{\gamma_{\rho,y}}\left[ U\right]\right)^{\top}A^{\top}w\right\|^2\right]\leq  \mathbb E_{\gamma_{\rho,y}}\left[ \left\|U-\mathbb E_{\gamma_{\rho,y}}\left[ U\right]\right\|^2\right]\|A\|^2,
\]
where we have applied the Cauchy-Schwarz inequality. We may further bound the expectation as 
\[
    \mathbb E_{\gamma_{\rho,y}}\left[ \left\|U-\mathbb E_{\gamma_{\rho,y}}[U]\right\|^2\right]=\mathbb E_{\gamma_{\rho,y}}\left[ \left\|U\right\|^2\right]-\left\|\mathbb E_{\gamma_{\rho,y}}[U]\right\|^2\leq \vert \spt(\rho)\vert_{\infty}^2.
\]
It follows from the previous two lines that all eigenvalues of $\nabla^2(L_{\rho}\circ A^{\top})(y)$ are bounded above by $\vert \spt(\rho) \vert_{\infty}^2 \|A\|^2 =: K$ so that $\nabla (L_{\rho}\circ A^{\top})$ is $K$-Lipschitz continuous.
\end{proof}

We briefly comment on some implications of \cref{lem:LipshitzConstant}.

\begin{remark} \label{rem:LipschitzCorollaries}
We make two observations as immediate consequences of \cref{lem:LipshitzConstant}: 
\begin{enumerate}[label=(\roman*)]
    \item As $\spt(\rho)\subset \mathcal{X}$,  $\nabla (L_{\rho} \circ A^{\top})$ is $ \vert \mathcal{X} \vert_{\infty}^{2} \Vert A \Vert^{2}$-Lipschitz for all $\rho \in \mathcal{P}(\mathcal{X})$. 
    \item For the choice $A =I$, this also shows that $L_{\rho}$ is $\vert \spt(\rho) \vert_{\infty}^2$-smooth.
\end{enumerate}
\end{remark}

\noindent
With this technical lemma in place, we can leverage the primal-dual recovery formula to establish a stability property for primal solutions under perturbations of the prior. 

\begin{proposition}[Primal solution stability] \label{prop:primalStability}
        Fix $\rho,\eta\in\mathcal P(\mathcal X)$. Let $x_{\rho},x_{\eta}$ solve the primal MEM problem with priors $\rho,\eta$, and $z_{\rho},z_{\eta}$ solve the respective dual MEM problems. Then:
        \begin{enumerate}[label=(\roman*)]
            \item \label{item:dualStab} Setting  $K'=\|A\||\spt(\rho)|_{\infty}^2$, \[ \begin{aligned}  \|x_{\rho}-x_{\eta}\|
            &\leq K'\|z_{\rho}-z_{\eta}\|
            +\kappa'(I,\rho,\eta)\left|\int \exp\langle A^{\top}z_{\eta},\cdot\rangle d(\eta-\rho)\right|\\
            &+\zeta'(I,\rho,{\eta})\left\|\int(\cdot) \exp\langle A^{\top}z_{\eta},\cdot\rangle d(\rho-\eta)\right\|. 
            \end{aligned}\] 
       \item \label{item:dualStabAdditional} If $A$ is injective and $g$ is $\frac{1}{\gamma}$-strongly convex, then:
        \begin{equation*}
        \Vert x_\rho- x_{\eta} \Vert \leq \frac{\gamma }{ \alpha\sigma_{\min}(A)} \Vert z_{\rho} - z_{\eta} \Vert. 
        \end{equation*}
        
        \end{enumerate}
        
\end{proposition}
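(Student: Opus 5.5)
For item (i) I will start from the primal-dual recovery formula \eqref{eqn:primalDualRecoveryFormula}, $x_\rho=\nabla L_\rho(A^\top z_\rho)$ and $x_\eta=\nabla L_\eta(A^\top z_\eta)$. Adding and subtracting $\nabla L_\rho(A^\top z_\eta)$ and applying the triangle inequality splits the error into two parts:
\[
\|x_\rho-x_\eta\|\leq \|\nabla L_\rho(A^\top z_\rho)-\nabla L_\rho(A^\top z_\eta)\|+\|\nabla L_\rho(A^\top z_\eta)-\nabla L_\eta(A^\top z_\eta)\|.
\]
The first part is the effect of moving the dual point under a fixed measure. The second is the effect of changing the measure at a fixed dual point.

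For the first part I will use the Hessian computation in \cref{lem:LipshitzConstant}, adapted to the map $y\mapsto \nabla L_\rho(A^\top y)$ rather than to $\nabla(L_\rho\circ A^\top)$. Its Jacobian is $\cov_{\gamma_{\rho,y}}(U)A^\top$. Since $\|\cov_{\gamma_{\rho,y}}(U)\|\le|\spt(\rho)|_\infty^2$ (as shown in that proof), the Jacobian has operator norm at most $\|A\||\spt(\rho)|_\infty^2=K'$. The mean value inequality along the segment from $z_\eta$ to $z_\rho$ then bounds the first part by $K'\|z_\rho-z_\eta\|$. The one thing to watch is that only one factor of $\|A\|$ appears, unlike the $K$ in \cref{lem:LipshitzConstant}, which has $\|A\|^2$. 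This is why I redo the derivative estimate for $\nabla L_\rho\circ A^\top$ directly instead of quoting that lemma.

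For the second part I will apply \cref{lem:elementaryBounds}\ref{eq:normBound} with $B=I$ and $z=z_\eta$, using the gradient formula \eqref{eqn:GradLogMGF}. It bounds the second part by $\kappa(I,\rho,\eta,z_\eta)\,|\int \exp\langle A^\top z_\eta,\cdot\rangle d(\eta-\rho)|+\zeta(I,\eta,z_\eta)\,\|\int(\cdot)\exp\langle A^\top z_\eta,\cdot\rangle d(\rho-\eta)\|$. These constants are at most $\kappa'(I,\rho,\eta)$ and $\zeta'(I,\rho,\eta)$ by the definitions of the maxima, which gives (i).

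For item (ii) I will use the optimality conditions instead. By \cref{lem:nonemptySubdifferentials} and the sum rule, the dual optimality of $z_\rho$ reads $b-Ax_\rho\in-\partial g^*(-z_\rho/\alpha)$. Since $g$ is smooth and convex, this is equivalent to $-z_\rho/\alpha=\nabla g(Ax_\rho-b)$, and the same relation holds for $\eta$. Subtracting, $\|z_\rho-z_\eta\|=\alpha\|\nabla g(Ax_\rho-b)-\nabla g(Ax_\eta-b)\|$. Because $g$ is $\frac1\gamma$-strongly convex, \cref{prop:strongConvexity}(iii) gives a lower bound of $\frac{\alpha}{\gamma}\|A(x_\rho-x_\eta)\|$, and injectivity of $A$ gives a further lower bound of $\frac{\alpha}{\gamma}\sigma_{\min}(A)\|x_\rho-x_\eta\|$. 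Rearranging yields (ii).

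The step that needs the most care is translating the dual subgradient condition into $-z/\alpha=\nabla g(Ax-b)$ through the conjugate subgradient inversion. That inversion is standard for proper lsc convex $g$, so I expect it to go through without difficulty.
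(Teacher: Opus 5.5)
Your proposal is correct and follows the paper's proof. In part (i) you use the same split through $\nabla L_\rho(A^\top z_\eta)$. Your Jacobian bound $\|\cov_{\gamma_{\rho,y}}(U)A^\top\|\le K'$ is exactly the paper's use of the $|\spt(\rho)|_\infty^2$-smoothness of $L_\rho$ (\cref{rem:LipschitzCorollaries}) followed by $\|A^\top\|=\|A\|$. A single application of \cref{lem:elementaryBounds} suffices there; the paper's second, reversed application is not needed. In part (ii) you read the optimality condition on the primal side, $-z/\alpha=\nabla g(Ax-b)$, and apply the lower bound of \cref{prop:strongConvexity}(iii) to $g$. The paper instead uses $Ax=b+\nabla g^*(-z/\alpha)$ together with the $\gamma$-Lipschitz continuity of $\nabla g^*$. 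The two are the same argument up to the inversion $\nabla g^*=(\nabla g)^{-1}$.
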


\begin{proof}
\smallskip \noindent \ref{item:dualStab} As a consequence of the primal-dual recovery formula \eqref{eqn:primalDualRecoveryFormula}, we have
\begin{align*}
    \|x_{\rho}-x_{\eta}\|&=\|\nabla L_{\rho}(A^{\top}z_{\rho})-\nabla L_{\eta}(A^{\top}z_{\eta})\|\\
    &\leq \|\nabla L_{\rho}(A^{\top}z_{\rho})-\nabla L_{\rho}(A^{\top}z_{\eta})\|+\|\nabla L_{\rho}(A^{\top}z_{\eta})-\nabla L_{\eta}(A^{\top}z_{\eta})\|.
\end{align*}
For the former term, we use \cref{rem:LipschitzCorollaries} (ii) to obtain that
\begin{align*}
    \|\nabla L_{\rho}(A^{\top}z_{\rho})-\nabla L_{\rho}(A^{\top}z_{\eta})\| &\leq \vert \spt(\rho) \vert_{\infty}^2  \Vert A^{\top} (z_{\rho} -z_{\eta}) \Vert\\
    &\leq \vert \spt(\rho) \vert_{\infty}^2 \Vert A \Vert \Vert z_{\rho} - z_{\eta} \Vert.
\end{align*}
For the latter term, in view of \eqref{eqn:GradLogMGF}, $\|\nabla L_{\rho}(A^{\top}z_{\eta})-\nabla L_{\eta}(A^{\top}z_{\eta}) \|$ is exactly in the form of \Cref{lem:elementaryBounds} \ref{eq:normBound} for the choices of $B=I$ and $z= z_{\eta}$. Hence
\begin{align*}
    &\|\nabla L_{\rho}(A^{\top}z_{\eta})-\nabla L_{\eta}(A^{\top}z_{\eta}) \| \\
    &\leq \kappa(I,\rho,\eta,z_{\eta}) \left|\int \exp\langle A^{\top}z_{\eta},\cdot\rangle d(\eta-\rho)\right| 
             +\zeta(I,\eta,z_{\eta})\left\|\int(\cdot) \exp\langle A^{\top}z_{\eta},\cdot\rangle d(\rho-\eta)\right\|.
\end{align*} 
Observing that $ \|\nabla L_{\rho}(A^{\top}z_{\eta})-\nabla L_{\eta}(A^{\top}z_{\eta}) \| =  \Vert \nabla L_{\eta}(A^{\top}z_{\eta})- \nabla L_{\rho}(A^{\top}z_{\eta}) \| $, we also apply \Cref{lem:elementaryBounds} \ref{eq:normBound} for the choices of $B=I$ and $z= z_{\eta}$, with the order of $L_{\eta}$ and $L_{\rho}$ interchanged. This gives
\begin{align*}
    &\|\nabla L_{\eta}(A^{\top}z_{\eta})- \nabla L_{\rho}(A^{\top}z_{\eta}) \| \\
    &\leq \kappa(I,\eta,\rho,z_{\eta}) \left|\int \exp\langle A^{\top}z_{\eta},\cdot\rangle d(\eta-\rho)\right| 
             +\zeta(I,\rho,z_{\eta})\left\|\int(\cdot) \exp\langle A^{\top}z_{\eta},\cdot\rangle d(\rho-\eta)\right\|.
\end{align*} 
Taking the maximum over these two bounds gives the result with the corresponding constants $\kappa'(I,\rho,\eta)$ and $\zeta'(I,\rho,\eta)$ as was stated in the theorem.

\smallskip \noindent \ref{item:dualStabAdditional} Assuming that $A$ is injective and $g$ is $\frac{1}{\gamma}$-strongly convex, $g^{*}$ is differentiable and $\gamma$-smooth. Rearranging the first order optimality conditions $\nabla \phi_{\rho}(z_{\rho}) = \nabla \phi_{\eta}(z_{\eta}) =0$ gives
    \begin{equation*}
        A\nabla L_{\eta}(A^{\top}z_\eta) - A \nabla L_{\rho}(A^{\top}z_\rho) =  \nabla g^{*}(-z_{\eta}/\alpha) - \nabla g^{*}(-z_{\rho}/\alpha). 
    \end{equation*}
    As $\Vert Ax \Vert \geq \sigma_{\min}(A) \Vert x \Vert$ for any $x  \in \R^{m}$, and we also have $\sigma_{\min}(A) > 0$ as $A$ is injective,
    \begin{equation}
         \sigma_{\min}(A) \Vert\nabla L_{\eta}(A^{\top}z_\eta) - \nabla L_{\rho}(A^{\top}z_\rho)\Vert \leq  \Vert \nabla g^{*}(-z_{\eta}/\alpha) - \nabla g^{*}(-z_{\rho}/\alpha) \Vert.\label{eqn:InjectiveA} 
    \end{equation}
    Returning to the primal-dual recovery formula \cref{eqn:primalDualRecoveryFormula}, we have $x_\eta - x_{\rho} =  \nabla L_{\eta}(A^{\top}z_\eta) - \nabla L_{\rho}(A^{\top}z_\rho).$ Hence, we find
    \begin{align*}
        \Vert x_{\eta} - x_{\rho} \Vert &= \Vert  \nabla L_{\eta}(A^{\top}z_\eta) -  \nabla L_{\rho}(A^{\top}z_\rho) \Vert \\
        &\leq \frac{1}{\sigma_{\min}(A)} \Vert\nabla g^{*}(-z_{\eta}/\alpha) - \nabla g^{*}(-z_{\rho}/\alpha) \Vert \\
        &\leq \frac{\gamma }{ \alpha\sigma_{\min}(A)} \Vert z_{\eta} - z_{\rho} \Vert,
    \end{align*}
    where we have used \eqref{eqn:InjectiveA} for the first inequality, and the $\gamma$-smoothness of $g^{*}$ in the latter.
\end{proof}

As with the dual solutions, the stability properties provided above enable us to obtain the parametric rate for empirical primal solutions.

\begin{theorem}[Parametric rates for primal solutions] \label{thm:parametricRatesPrimalSoln} Fix $\mu\in\mathcal P(\mathcal X)$ and let $\hat \mu_n:=\frac 1n\sum_{i=1}^n\delta_{X_i}$ be the empirical measure from $n$ i.i.d. samples $X_1,\dots, X_n$ from $\mu$. Let $x_{\mu},x_{\hat \mu_{n}}$ denote the solutions to the primal MEM problem with priors $\mu$ and $\hat{\mu}_{n}$ respectively. Then:
\begin{enumerate}[label=(\roman*)]
        \item Letting $C' = 2|\spt(\mu)|_{\infty}\exp\left(2\|A\|\|z_{\mu}\||\spt(\mu)|_{\infty}\right)$, $C = \frac{\alpha}{\beta} \Vert A \Vert C'$ be the constants from \Cref{thm:parametricRatesDual} and $K' = \vert \spt(\mu) \vert_{\infty}^{2} \Vert A \Vert$, we have \label{item:primalStab} $$
    \mathbb E[\|x_{\hat\mu_n}-x_{\mu}\|] \leq  \frac{K'C+C'}{\sqrt n}.
    $$
\item \label{item:primalStabAdditional} If $A$ is injective and $g$ is $\frac{1}{\gamma}$-strongly convex, then
\begin{equation*}
\mathbb E \left[    \Vert x_{\hat \mu_n}- x_{\mu} \Vert\right] \leq \frac{\gamma C}{ \alpha\sigma_{\min}(A)} \frac{1}{\sqrt n}. 
\end{equation*}
\end{enumerate}

\end{theorem}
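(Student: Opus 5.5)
We apply \Cref{prop:primalStability} with $\rho=\hat\mu_n$ and $\eta=\mu$, take expectations, and reuse the integral bounds from the proof of \Cref{thm:parametricRatesDual}.

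\textbf{Part (i).} \Cref{prop:primalStability}\ref{item:dualStab} gives
\[
\|x_{\hat\mu_n}-x_\mu\|\leq |\spt(\hat\mu_n)|_\infty^2\|A\|\,\|z_{\hat\mu_n}-z_\mu\| + \kappa'(I,\hat\mu_n,\mu)\,T_1+\zeta'(I,\hat\mu_n,\mu)\,T_2,
\]
where $T_1=\left|\int \exp\langle A^\top z_\mu,\cdot\rangle d(\mu-\hat\mu_n)\right|$ and $T_2=\left\|\int(\cdot)\exp\langle A^\top z_\mu,\cdot\rangle d(\hat\mu_n-\mu)\right\|$. Since $\spt(\hat\mu_n)\subset\spt(\mu)$ almost surely, the first coefficient is at most $K'=|\spt(\mu)|_\infty^2\|A\|$ almost surely. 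Taking expectations and invoking \Cref{thm:parametricRatesDual}, the first term contributes at most $K'C/\sqrt n$.

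For the remaining two terms, \Cref{lem:ConstantUpperBound} with $B=I$ (so $\|B\|=1$) yields the deterministic almost-sure bounds
\[
\kappa'(I,\hat\mu_n,\mu)\leq |\spt(\mu)|_\infty e^{\|A\|\|z_\mu\||\spt(\mu)|_\infty},\qquad \zeta'(I,\hat\mu_n,\mu)\leq e^{\|A\|\|z_\mu\||\spt(\mu)|_\infty}.
\]
The estimates \eqref{eqn:IntegralBound1} and \eqref{eqn:integralBound2} bound $\mathbb E[T_1]$ and $\mathbb E[T_2]$ by $n^{-1/2}e^{\|A\|\|z_\mu\||\spt(\mu)|_\infty}$ and $n^{-1/2}|\spt(\mu)|_\infty e^{\|A\|\|z_\mu\||\spt(\mu)|_\infty}$ respectively. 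Multiplying the coefficient bounds by these expectations, each product is $n^{-1/2}|\spt(\mu)|_\infty e^{2\|A\|\|z_\mu\||\spt(\mu)|_\infty}$. Their sum is exactly $C'/\sqrt n$, and adding the first term gives $(K'C+C')/\sqrt n$.

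\textbf{Part (ii).} Taking expectations in \Cref{prop:primalStability}\ref{item:dualStabAdditional} and applying \Cref{thm:parametricRatesDual} gives the bound $\frac{\gamma C}{\alpha\sigma_{\min}(A)}n^{-1/2}$ immediately.

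\textbf{Main obstacle.} The only delicate point is measurability and integrability. The random coefficients must be bounded by constants almost surely, rather than merely in expectation, so that they can be pulled outside the expectation. The support inclusion $\spt(\hat\mu_n)\subset\spt(\mu)$, which holds with probability one, guarantees this.
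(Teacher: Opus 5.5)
Your proposal is correct and follows the paper's proof essentially step for step. You apply \Cref{prop:primalStability} with $\rho=\hat\mu_n$ and $\eta=\mu$, and bound $|\spt(\hat\mu_n)|_\infty^2\|A\|$ by $K'$ using $\spt(\hat\mu_n)\subset\spt(\mu)$. You then control $\kappa'(I,\hat\mu_n,\mu)$ and $\zeta'(I,\hat\mu_n,\mu)$ through \Cref{lem:ConstantUpperBound} with $B=I$, combine these with \Cref{thm:parametricRatesDual} and the estimates \eqref{eqn:IntegralBound1}--\eqref{eqn:integralBound2}, and obtain part (ii) as the same one-line consequence of part (ii) of \Cref{prop:primalStability}.
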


\begin{proof}
\smallskip \noindent \ref{item:primalStab} Applying \Cref{prop:primalStability} \ref{item:dualStab} with $\eta = \mu, \rho = \hat{\mu}_{n}$ gives
\begin{align*}
    \|x_{\hat{\mu}_{n}}-x_{\mu}\|
            &\leq K\|z_{\hat{\mu}_{n}}-z_{\mu}\|+\kappa'(I,\hat{\mu}_{n},\mu)\left|\int \exp\langle A^{\top}z_{\mu},\cdot\rangle d(\mu-\hat{\mu}_{n})\right|
            \\
            &+\zeta'(I,\hat{\mu}_{n},{\mu})\left\|\int(\cdot) \exp\langle A^{\top}z_{\mu},\cdot\rangle d(\hat{\mu}_{n}-\mu)\right\|,
\end{align*}
where $K =\|A\||\spt(\hat{\mu}_{n})|_{\infty}^2\leq K'$, noting that $\spt(\hat{\mu}_{n}) \subset \spt(\mu)$.
Taking the expectation of the above inequality and applying \Cref{thm:parametricRatesDual} to the first term yields
\begin{equation*}
\begin{aligned}
    \E[\|x_{\hat\mu_n}-x_{\mu}\|] 
    &\leq  \frac{K'C}{\sqrt n} + \E \left[ \kappa'(I,\hat{\mu}_{n},\mu)\left|\int \exp\langle A^{\top}z_{\mu},\cdot\rangle d(\mu-\hat{\mu}_{n})\right|\right]
    \\
    &
    +\mathbb E\left[\zeta'(I,\hat{\mu}_{n},{\mu})\left\|\int(\cdot) \exp\langle A^{\top}z_{\mu},\cdot\rangle d(\hat{\mu}_{n}-\mu)\right\|\right].
\end{aligned}
\end{equation*}

Hence, it remains to bound the expectation of the latter terms. From \eqref{eqn:IntegralBound1} and \eqref{eqn:integralBound2},
\begin{align*}
    \mathbb E\left[\left|\int \exp\langle A^{\top}z_{\mu},\cdot\rangle d(\mu-\hat\mu_n)\right|\right] &\leq \frac{1}{\sqrt n}\exp\left(\|A\|\|z_{\mu}\||\spt(\mu)|_{\infty}\right), 
    \\\E \left[\left\|\int (\cdot)\exp\langle A^{\top}z_{\mu},\cdot\rangle d(\mu-\hat\mu_n)\right\|\right] &\leq {\frac{1}{\sqrt n}}|\spt(\mu)|_{\infty}\exp\left(\|A\|\|z_{\mu}\||\spt(\mu)|_{\infty}\right).
\end{align*}
As for $\kappa'(I,\hat \mu_n,\mu), \zeta'(I,\hat \mu_n,\mu)$, we apply \cref{lem:ConstantUpperBound} with  $B=I$ to obtain that $\kappa'(I, \hat{\mu}_{n},\mu) \leq |\spt(\mu)|_{\infty}\exp\left(\|A\|\|z_{\mu}\||\spt(\mu)|_{\infty}\right)$  and $\zeta'(I,\hat{\mu}_{n},{\mu}) \leq  \exp(\Vert A \Vert  \Vert z_{\mu} \Vert  \vert \spt(\mu) \vert_{\infty})$. This enables us to control the terms involving the integrals as  
\begin{equation*}
    \frac{1}{\sqrt{n}}\left( \vert \spt(\mu)\vert_{\infty} \exp\left(2 \|A\|\|z_{\mu}\||\spt(\mu)|_{\infty}\right)+ |\spt(\mu)|_{\infty}\exp\left(2\|A\|\|z_{\mu}\||\spt(\mu)|_{\infty}\right) \right),
\end{equation*}
which coincides with $\frac{C'}{\sqrt{n}}$, thus completing the proof of item (i).

\smallskip \noindent \ref{item:primalStabAdditional} Supposing that $A$ is injective and $g$ is $\frac{1}{\gamma}$-strongly convex, we  apply \Cref{prop:primalStability} \ref{item:dualStabAdditional} with $\rho=\hat{\mu}_{n}, \eta = \mu$, giving
\begin{equation*}
    \Vert x_{\hat{\mu}_{n}}- x_{\mu} \Vert \leq \frac{\gamma }{ \alpha\sigma_{\min}(A)} \Vert z_{\hat{\mu}_{n}} - z_{\mu} \Vert .
\end{equation*}
Taking the expectation and applying \Cref{thm:parametricRatesDual} yields
\begin{equation*}
    \E \big[\Vert x_{\hat{\mu}_{n}} - x_{\mu} \Vert\big] \leq  \frac{\gamma }{ \alpha\sigma_{\min}(A)} \E\big[ \Vert z_{\hat{\mu}_{n}} - z_{\mu} \Vert \big]\leq  \frac{\gamma C }{ \alpha\sigma_{\min}(A)} \frac{1}{\sqrt{n}}.
\end{equation*}
    
\end{proof}

\noindent
As before, we extend the sample complexity to the case of  approximate minimizers.

\begin{theorem}[Parametric rates for approximate solutions]\label{th:EpsSol}
    
    Let $z_{\hat{\mu}_{n}, \varepsilon}$ be a measurable selection of an $\varepsilon$-approximate solution to the dual MEM problem for $\hat{\mu}_{n}$%
    and set $ x_{\hat{\mu}_{n}, \varepsilon} := \nabla L_{\hat{\mu}_{n}}(A^{\top}z_{\hat{\mu}_{n}, \varepsilon})$.
    Then,
    \begin{align*}
        \E \big[\Vert x_{\hat{\mu}_{n}, \varepsilon} - x_{\mu} \Vert\big] \leq \frac{K'C+C'}{\sqrt{n}} + K' \sqrt{\frac{2\alpha}{\beta}\varepsilon},
    \end{align*}
    where $K',C,C'$ are the constants of \cref{thm:parametricRatesPrimalSoln}. If, additionally, $A$ is injective and $g$ is $\frac{1}{\gamma}$-strongly convex, then
\begin{equation*}
\mathbb E \left[    \Vert x_{\hat \mu_n,\varepsilon}- x_{\mu} \Vert\right] \leq \frac{\gamma C}{ \alpha\sigma_{\min}(A)} \frac{1}{\sqrt n}+ K' \sqrt{\frac{2\alpha}{\beta}\varepsilon}. 
\end{equation*}
\end{theorem}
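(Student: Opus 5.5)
The plan is to reduce to the exact-solution bounds of \cref{thm:parametricRatesPrimalSoln} via the triangle inequality. Write
\[
\|x_{\hat\mu_n,\varepsilon}-x_\mu\|\leq \|x_{\hat\mu_n,\varepsilon}-x_{\hat\mu_n}\|+\|x_{\hat\mu_n}-x_\mu\|,
\]
where $x_{\hat\mu_n}=\nabla L_{\hat\mu_n}(A^\top z_{\hat\mu_n})$ is the exact empirical primal solution given by the recovery formula \eqref{eqn:primalDualRecoveryFormula}. The second term has expectation at most $\frac{K'C+C'}{\sqrt n}$ by \cref{thm:parametricRatesPrimalSoln}~(i). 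In the injective, strongly convex case it is at most $\frac{\gamma C}{\alpha\sigma_{\min}(A)}\frac1{\sqrt n}$ by \cref{thm:parametricRatesPrimalSoln}~(ii). Measurability of everything involved is guaranteed by \cref{rmk:measurableSelection}.

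For the first term, both points are images under the \emph{same} map $\nabla L_{\hat\mu_n}$, so no measure perturbation enters. By \cref{rem:LipschitzCorollaries}~(ii), $\nabla L_{\hat\mu_n}$ is $|\spt(\hat\mu_n)|_\infty^2$-Lipschitz. Hence
\[
\|x_{\hat\mu_n,\varepsilon}-x_{\hat\mu_n}\|\leq |\spt(\hat\mu_n)|_\infty^2\|A\|\,\|z_{\hat\mu_n,\varepsilon}-z_{\hat\mu_n}\|\leq K'\|z_{\hat\mu_n,\varepsilon}-z_{\hat\mu_n}\|,
\]
using $\spt(\hat\mu_n)\subset\spt(\mu)$ almost surely.

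Next, the dual gap is controlled exactly as in the corollary on approximate dual solutions. Since $0\in\partial\phi_{\hat\mu_n}(z_{\hat\mu_n})$ and $\phi_{\hat\mu_n}$ is $\beta/\alpha$-strongly convex, \eqref{eqn:strongConvexVer2} gives
\[
\|z_{\hat\mu_n,\varepsilon}-z_{\hat\mu_n}\|\leq\sqrt{\tfrac{2\alpha}{\beta}\varepsilon}
\]
pointwise, for every sample realization. Taking expectations and summing the two contributions yields both claimed bounds. The additive term $K'\sqrt{2\alpha\varepsilon/\beta}$ is the same in both cases, because the first term is handled identically regardless of the assumptions on $A$ and $g$.

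I expect no step to be a real obstacle. The only point needing care is the injective case: \cref{prop:primalStability}~(ii) relies on the first-order conditions at exact minimizers, so it must not be applied to $z_{\hat\mu_n,\varepsilon}$. That is precisely why the argument splits through the exact empirical solution $x_{\hat\mu_n}$ rather than comparing $x_{\hat\mu_n,\varepsilon}$ with $x_\mu$ directly.
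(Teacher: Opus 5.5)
Your proposal is correct and follows essentially the same route as the paper. The paper also splits through the exact empirical solution $x_{\hat\mu_n}$ via the triangle inequality, bounds $\|x_{\hat\mu_n,\varepsilon}-x_{\hat\mu_n}\|$ using the $|\spt(\hat\mu_n)|_\infty^2$-Lipschitz continuity of $\nabla L_{\hat\mu_n}$ together with $\spt(\hat\mu_n)\subset\spt(\mu)$ and the $\beta/\alpha$-strong convexity of $\phi_{\hat\mu_n}$, and then invokes both parts of the exact-solution primal rate theorem for the remaining term.
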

\begin{proof}
    The triangle inequality yields
    \begin{align}
        \Vert x_{\hat{\mu}_{n}, \varepsilon} - x_{\mu} \Vert &\leq \Vert x_{\hat{\mu}_{n}, \varepsilon} - x_{\hat{\mu}_{n}} \Vert + \Vert  x_{\hat{\mu}_{n}} - x_{\mu} \Vert \nonumber \\
        &= \Vert \nabla L_{\hat{\mu}_{n}}(A^{\top}z_{\hat{\mu}_{n}, \varepsilon}) - \nabla L_{\hat{\mu}_{n}}(A^{\top}z_{\hat{\mu}_{n}}) \Vert + \Vert  x_{\hat{\mu}_{n}} - x_{\mu} \Vert . \label{eqn:substitutionApprox}
    \end{align}
The expectation of the latter term has already been bounded in these different cases in \cref{thm:parametricRatesPrimalSoln}, so it remains to bound the former term. However, \cref{lem:LipshitzConstant} yields that $\nabla L_{\hat{\mu}_{n}}$ is $\vert \spt(\hat{\mu}_{n}) \vert^{2}_{\infty}$-Lipschitz continuous and hence 
\begin{align*}
   \Vert \nabla L_{\hat{\mu}_{n}}(A^{\top}z_{\hat{\mu}_{n}, \varepsilon}) - \nabla L_{\hat{\mu}_{n}}(A^{\top}z_{\hat{\mu}_{n}}) \Vert &\leq  \vert \spt(\hat{\mu}_{n}) \vert_{\infty}^2  \Vert A^{\top}( z_{\hat{\mu}_{n}, \varepsilon} - z_{\hat{\mu}_{n}}) \Vert\\
   &\leq \vert \spt({\mu}) \vert_{\infty}^2 \Vert A \Vert  \Vert z_{\hat{\mu}_{n}, \varepsilon} - z_{\hat{\mu}_{n}} \Vert.
\end{align*}
In turn, using the $\beta/\alpha$-strong convexity of $\phi_{\hat{\mu}_{n}}$, we use  \eqref{eqn:strongConvexVer2} and the fact that $z_{\hat{\mu}_{n}}$ is a minimizer of $\phi_{\hat{\mu}_{n}}$ (hence $0 \in \partial \phi_{\hat{\mu}_{n}}(z_{\hat{\mu}_{n}})$) to obtain
\begin{equation*}
    \frac{\beta}{2\alpha} \Vert z_{\hat{\mu}_{n}, \varepsilon} - z_{\hat{\mu}_{n}} \Vert^{2} \leq \phi_{\hat{\mu}_{n}}( z_{\hat{\mu}_{n}, \varepsilon} )- \phi_{\hat{\mu}_{n}}(z_{\hat{\mu}_{n}})  \leq \varepsilon,
\end{equation*}
where the second inequality follows by the definition of $z_{\hat{\mu}_{n}, \varepsilon}$. Hence,
\begin{equation*}
    \Vert \nabla L_{\hat{\mu}_{n}}(A^{\top}z_{\hat{\mu}_{n}, \varepsilon}) - \nabla L_{\hat{\mu}_{n}}(A^{\top}z_{\hat{\mu}_{n}}) \Vert \leq \vert \spt({\mu}) \vert_{\infty}^2 \Vert A \Vert \sqrt{ \frac{2\alpha}{\beta} \varepsilon}.
\end{equation*}
Taking the expectation in \eqref{eqn:substitutionApprox} then gives the result.
\end{proof}

\section{Maximum Entropy on the Mean as Empirical Risk Minimization}\label{sec:ER}

In the sequel, we show that the dual MEM problem can be construed as a risk minimization problem. This enables us to solve the  dual  empirical MEM problem using stochastic gradient-based methods and to leverage standard results on the statistical properties of empirical risk minimization problems.    
\subsection{An Empirical Risk Minimization Perspective}
The classical study of  empirical risk minimization consists of analyzing the effects of estimating $\theta_{\mu}\in\argmin_{\theta\in\Theta} \mathbb E_{\mu}[\ell(X,\theta)]$ by $\theta_{\hat \mu_n}\in\argmin_{\theta\in\Theta} \mathbb E_{\hat \mu_n}[\ell(X,\theta)]$ where $\ell$ is a loss function. In words, we wish to perform inference on the minimizer of a function of interest, but only observe surrogates of this function which are obtained via sample-based approximations. Standard results regarding consistency, rates of convergence, and limit distributions  are available in the reference textbooks \cite{kosorok2008introduction,
vaart1996empirical,van2000asymptotic} under the broader study of $M$-estimation.  

Remarkably, for a given prior $\mu\in\mathcal P(\mathcal X)$, the dual problem \eqref{eqn:MEMDual} can be written in the standard form described above. Precisely, for each $z\in\mathbb R^d$, 
\[
\begin{aligned}
    \alpha g^{\ast}(-z/\alpha)-\langle b,z\rangle + \log \int \exp\langle A^{\top}z,\cdot\rangle d\mu &= \log \int \exp\left(\alpha g^{\ast}(-z/\alpha)-\langle b,z\rangle+\langle A^{\top}z,\cdot\rangle \right) d\mu.
\end{aligned}
\]
 As the logarithm is strictly monotonically increasing, the optimization problem
\[
\min_{z\in \mathbb R^d}\int \exp\left(\alpha g^{\ast}(-z/\alpha)-\langle b,z\rangle+\langle A^{\top}z,\cdot\rangle \right) d\mu
\]
has the exact same solution(s) as the MEM dual problem \eqref{eqn:MEMDual}.
It follows that the population-level dual problem can be written  equivalently as 
\[z_{\mu}\in\argmin_{z\in\mathbb R^d}\mathbb E_{\mu}\left[ \ell(X,z)\right]\text{ where }\ell(x,z)=\exp\left(\alpha g^{\ast}(-z/\alpha)-\langle b,z\rangle+\langle A^{\top}z,x\rangle \right),
\] 
whereas the empirical dual problem is equivalent to 
\[
z_{\hat \mu_n}\in\argmin_{z\in \mathbb R^d}\mathbb E_{\hat \mu_n}\left[ \ell(X,z)\right]=\argmin_{z\in \mathbb R^d}\frac 1n\sum_{i=1}^n\exp\left(\alpha g^{\ast}(-z/\alpha)-\langle b,z\rangle+\langle A^{\top}z,X_i\rangle \right),
\]
 from which one can recover $x_{\mu}$ and $x_{\hat{\mu}_{n}}$ from their respective primal-dual recovery formulae, see \eqref{eqn:primalDualRecoveryFormula}. This formulation coincides with the standard empirical risk minimization setting described above and, importantly, preserves strong convexity of the original objective. 

\begin{proposition}[Strong convexity under composition with $\exp$] \label{lem:strongConvexityExponential}
    Suppose that $h:\mathbb R^d\to \overline{\mathbb R}$ is proper, lsc, and $\gamma$-strongly convex. Then, $z\in\mathbb R^d\mapsto e^{h(z)}$ is $\gamma'$-strongly convex for $\gamma'=\gamma e^{\inf_{\mathbb R^d}h}>0$ and $\partial(e^h)(z)=\{ v e^{h(z)}:v\in \partial h(z)\}$. 
\end{proposition}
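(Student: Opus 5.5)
Since $h$ is $\gamma$-strongly convex, proper and lsc, \cref{prop:strongConvexity} (ii) gives a unique minimizer $\bar z$, so $m:=\inf_{\mathbb R^d}h=h(\bar z)$ is finite. Hence $\gamma'=\gamma e^{m}>0$ and $e^h\geq e^{m}$ everywhere. Here $e^{+\infty}:=+\infty$, which gives $\dom(e^h)=\dom h$. I will use characterization 4 of \cref{prop:strongConvexity} (i) for both the hypothesis and the conclusion, but first I must identify the subdifferential.

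\emph{Step 1: the subdifferential formula.} For the inclusion $\supseteq$, take $v\in\partial h(z)$. Then $z\in\dom h$, and convexity of $\exp$ gives $e^{t}\geq e^{s}+e^{s}(t-s)$. With $s=h(z)$ and $t=h(y)$ this yields
\[
e^{h(y)}\geq e^{h(z)}+e^{h(z)}\bigl(h(y)-h(z)\bigr)\geq e^{h(z)}+\langle e^{h(z)}v,y-z\rangle .
\]
So $e^{h(z)}v\in\partial(e^h)(z)$. The case $h(y)=+\infty$ is trivial.

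For $\subseteq$, I would use a chain rule for the composition of the nondecreasing convex function $\exp$ (extended by $+\infty$) with $h$. Concretely, if $w\in\partial(e^h)(z)$, put $v=e^{-h(z)}w$ and show that $v\in\partial h(z)$ by a local argument. Fix $y$ and $t\in(0,1)$, and set $y_t=z+t(y-z)$. Convexity of $h$ gives $h(y_t)\leq h(z)+t(h(y)-h(z))$, and the subgradient inequality gives $e^{h(y_t)}\geq e^{h(z)}+t\langle w,y-z\rangle$. Taking logarithms,
\[
h(y_t)-h(z)\geq \log\bigl(1+t\langle v,y-z\rangle\bigr).
\]
Combining the two bounds, dividing by $t$, and letting $t\downarrow 0$ yields $h(y)-h(z)\geq\langle v,y-z\rangle$. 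Alternatively, one can cite the standard chain rule (e.g. \cite[Theorem 10.49]{rockafellar1998variational}), since $\exp$ is differentiable with positive derivative. I expect this step to be the main obstacle, because it requires care when $\partial h(z)$ is empty or $z$ lies on the boundary of $\dom h$. The direct argument above handles those cases, since it never assumes nonemptiness.

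\emph{Step 2: strong convexity.} Take $z_1\in\dom\partial(e^h)$ and $w\in\partial(e^h)(z_1)$. By Step 1, $w=e^{h(z_1)}v$ with $v\in\partial h(z_1)$. Strong convexity of $h$ (item 4) gives $h(z_2)-h(z_1)\geq\langle v,z_2-z_1\rangle+\frac\gamma2\|z_2-z_1\|^2=:\delta$. Then
\[
e^{h(z_2)}-e^{h(z_1)}=e^{h(z_1)}\bigl(e^{h(z_2)-h(z_1)}-1\bigr)\geq e^{h(z_1)}\bigl(h(z_2)-h(z_1)\bigr)\geq e^{h(z_1)}\delta .
\]
Bounding $e^{h(z_1)}\geq e^{m}$ in front of the quadratic term gives
\[
e^{h(z_2)}-e^{h(z_1)}\geq\langle w,z_2-z_1\rangle+\frac{\gamma e^{m}}2\|z_2-z_1\|^2 .
\]
(If $h(z_2)=+\infty$ the inequality is trivial.)

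\emph{Step 3: closing the argument.} Item 4 of \cref{prop:strongConvexity} is stated for all points, so I must pass from $\dom\partial$ to all of $\dom(e^h)$. It suffices to prove the midpoint-type inequality defining strong convexity. Fix $x,y\in\dom h$ and $\lambda\in(0,1)$, and set $z_\lambda=\lambda x+(1-\lambda)y$. If $\partial h(z_\lambda)\neq\emptyset$, apply Step 2 at $z_1=z_\lambda$ with $z_2=x$ and with $z_2=y$, then combine the two inequalities with weights $\lambda$ and $1-\lambda$. The linear terms cancel and yield $\lambda e^{h(x)}+(1-\lambda)e^{h(y)}-e^{h(z_\lambda)}\geq\frac{\gamma'}{2}\lambda(1-\lambda)\|x-y\|^2$.

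In general, $e^h$ is proper, lsc and convex, since $\exp$ is increasing and convex. Its subdifferential is nonempty on $\mathrm{ri}\,\dom$, and that set is dense in the domain along segments. So I would approximate points by points of $\mathrm{ri}\,\dom h$ and pass to the limit using lower semicontinuity together with convexity along the segment. Alternatively, I would argue directly, without subgradients: write $e^{h(z_\lambda)}\le e^{\lambda h(x)+(1-\lambda)h(y)-\frac\gamma2\lambda(1-\lambda)\|x-y\|^2}$ and use the elementary bound $e^{a-c}\leq e^{a}-e^{m}c$ for $a-c\geq m$ and $c\geq0$, which follows from the mean value theorem. The remaining step is then the convexity inequality $e^{\lambda h(x)+(1-\lambda)h(y)}\leq\lambda e^{h(x)}+(1-\lambda)e^{h(y)}$. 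This direct route avoids the density issue entirely, and I would prefer it for the strong convexity part.
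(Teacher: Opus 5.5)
Your proof is correct, and its core (Step 2) is exactly the paper's argument. You write a subgradient of $e^h$ at $z_1$ as $e^{h(z_1)}v$ with $v\in\partial h(z_1)$, use $e^{s}-1\ge s$, and bound $e^{h(z_1)}\ge e^{\inf h}$ in front of the quadratic term. The differences are in how the two ends are handled.

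For the subdifferential formula, the paper simply cites the convex-composite chain rule of \cite[Corollary 4]{burke2021study}, whereas you prove both inclusions by hand. Your $\subseteq$ argument via $t(h(y)-h(z))\ge\log(1+t\langle v,y-z\rangle)$ is sound. The logarithm is defined once $t<1/|\langle v,y-z\rangle|$, and the case $y\notin\dom h$ is trivial. This makes the proof self-contained.

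For the conclusion, the paper just invokes the implication 4$\Rightarrow$1 of \cref{prop:strongConvexity}. Item 4 there quantifies over $v\in\partial f(x_1)$, so it is vacuous at points where the subdifferential is empty. Your Step 2 therefore already delivers item 4 for $e^h$, and the worry in Step 3 about points outside $\dom\partial(e^h)$, together with the approximation argument you sketch, is unnecessary if you accept that proposition.

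Your preferred direct route, $e^{h(z_\lambda)}\le e^{a-c}\le e^{a}-e^{m}c$ followed by convexity of $\exp$, is genuinely different and more economical. It proves $\gamma e^{m}$-strong convexity straight from the definition, needing neither the chain rule nor the subgradient characterization. It thus decouples the two assertions of the proposition, whereas the paper's route needs the chain rule before it can conclude strong convexity.

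You also correctly justify that $m=\inf h$ is finite (via the unique minimizer). The paper leaves this implicit when asserting $\gamma'>0$.
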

\begin{proof}
    We record that $\exp$ is  proper, lsc, convex, increasing, and finite valued.  Hence, with the convention $e^{+\infty}=+\infty$, for any $z \in \dom(h) = \dom(e^h)$, we may apply the chain rule of \cite[Corollary 4]{burke2021study} to obtain that
    \begin{equation*}
        \partial(e^{h})(z) = e^{h(z)} \partial h(z). 
    \end{equation*}
    The same reference also asserts that $e^h$ is convex so that this subdifferential is well-defined. 
    Now, let $z_{1},z_{2} \in \dom(e^h)$, and take $v_{2} \in \partial h(z_{2})$. Then,
    \[
    \begin{aligned}
        e^{h(z_1)}-e^{h(z_2)}\geq e^{h(z_2)}\left(h(z_1)-h(z_2)\right)&\geq e^{h(z_2)}\left(\langle v_2,z_1-z_2\rangle+\frac{\gamma}{2}\|z_1-z_2\|^2\right),
    \end{aligned} 
    \]
    where the first inequality uses the (sub)gradient inequality \eqref{eqn:subgradientDefinition} for the convex function $e^{x}$, and the latter uses the strong convexity of $h$, namely  \eqref{eqn:strongConvexVer2}.
    We conclude by using the chain rule at $v_{2}$, observing $\{v_2e^{h(z_2)}:v_2\in\partial h(z_2)\}=\partial(e^h)(z_2)$ and $\gamma e^{h(z_2)}\geq  \gamma e^{\inf_{\mathbb R^d}h}$, whereby
    \begin{equation*}
        e^{h(z_1)}-e^{h(z_2)} \geq\langle u_2,z_1-z_2\rangle+\frac{\gamma'}{2}\|z_1-z_2\|^2,\qquad  \forall u_{2} \in \partial(e^h)(z_2),
    \end{equation*}
    which, by  \cref{prop:strongConvexity}, implies strong convexity of $e^h$ with modulus $\gamma'$.
\end{proof}

\noindent
The implications of connecting MEM problems to empirical risk minimization are twofold: 

First, we may leverage the existing literature on $M$-estimation to establish statistical properties of the empirical MEM problem. As noted above, there is a vast literature on analyzing $M$-estimators under different assumptions on the underlying problem. A recent example is given in the work of \cite{brunel2025asymptotics} which establishes limit laws for $M$-estimators in a general setting. %
While rates of convergence (in expectation) for $M$-estimators are accounted for in the literature (cf., e.g., \cite{birge1993rates}), these results are stated in greater generality than is required here,  and hence require the verification of a number of technical conditions.  
By contrast, the rates obtained in
\cref{sec:parametricRates} follow from first principles of  convex analysis and probability theory, enabling a simpler, and more self-contained exposition. 

Second, the alternative formulation of the dual MEM problem is amenable to optimization via  stochastic gradient methods. As such, even in the case where a population-level prior, $\mu$, is known, but does not admit a tractable $\log$ moment generating function, we may approximate the gradient via a sample-based approximation and derive meaningful convergence results as expounded next.    

\subsection{Clipped SGD and convergence}

As noted above, the dual MEM problem with prior $\mu \in \mathcal{P}(\mathcal{X})$ can be reformulated as
\begin{equation}
\min_{z\in \R^{d}}\mathbb E_{\mu}\left[ \exp\left(\alpha g^{\ast}(-z/\alpha)-\langle b,z\rangle+\langle A^{\top}z,X\rangle \right)\right] =: \min_{z\in \R^{d}} \E_{\mu}\left[ \ell(X,z)\right]. \tag{$D_{\mu}'$} \label{eqn:reformulated}
\end{equation} 
As such, the corresponding empirical MEM dual problem is
\begin{equation}
\min_{z\in \R^{d}} \frac{1}{n} \sum_{i=1}^{n} \exp\left(\alpha g^{\ast}(-z/\alpha)-\langle b,z\rangle+\langle A^{\top}z,X_{i}\rangle \right) = \min_{z \in \R^{d}} \frac{1}{n} \sum_{i=1}^{n}  \ell(X_{i},z). \tag{$D_{\hat{\mu}_{n}}'$} \label{eqn:reformulated_empirical}
\end{equation} 
Both of these problems are thus amenable to optimization via stochastic gradient methods. The main distinction between these problems is that optimizing $(D_{\mu}')$ using such methods requires a mechanism to draw new i.i.d. samples from the population measure $\mu$ at each iteration to approximate the population gradient via mini-batching. On the other hand, if we only have a fixed finite number of samples from $\mu$ (e.g., a dataset) we may also consider solving $(D_{\hat \mu_n}')$ using stochastic methods by taking random subsets of the finite collection of samples to form mini-batched gradients. This is particularly useful in the case that the number of samples is so large that storing the entire dataset in memory is prohibitively expensive or that summing over all of the samples is costly.  The results of the previous section establish that $x_{\hat \mu_n}$ can serve as a good proxy for $x_{\mu}$ (on average) if the samples used to form $\hat \mu_n$ are i.i.d. from $\mu$ and $n$ is sufficiently large.         

This perspective is thus advantageous in cases where excessively large amounts of data are available, as it enables scalability via mini-batching gradients and the ability to stream the data in place of storing it.       
On the other hand, dropping the outer logarithm introduces numerical instability as the gradients involve exponential terms. 

For instance, if $g = g^{*} = \frac{1}{2} \Vert \cdot \Vert^{2}$,
\begin{equation*}
   \nabla \E_{\mu}\left[ \ell(X,z)\right] =  \E_{\mu} \left[ \left(\frac{1}{\alpha}z -  b  + AX\right)\exp\left( \frac{1}{2 \alpha}\Vert z \Vert^{2} + \langle AX-b, z\rangle \right) \right],
\end{equation*}
which can lead to numerical overflow if $\|z\|$ is even moderately large. 

To account for this, we propose to use a stochastic gradient method with clipped gradients as discussed in \cite{mai2021stability}. The crux of this approach consists of normalizing the stochastic gradient before taking a step to avoid taking  excessively large steps.  
We note, however, that the method analyzed in that work appears to tacitly assume that the function being minimized has full domain\footnote{In effect, the clipped SGD  update \cite[Equation 2]{mai2021stability} does not enforce that iterates lie in the domain of the objective. Further, their assumption A4 requires the expectation of the squared norm of a subgradient to be uniformly bounded. As the subdifferential of a convex function $f$ is non-empty and bounded only at points in $\inte(\dom(f))$ such conditions generally cannot hold at boundary points of $\dom(f)$.} and that subgradients are chosen in a measurable manner. To account for functions with restricted domain, we introduce a projection step into the clipped SGD method and work under appropriate conditions guaranteeing that all iterates of the algorithm are in $\inte(\dom(g^{\ast}(-\cdot/\alpha)))$ which guarantees that subgradients at each iterate are bounded.  We provide a sufficient condition for this assumption to hold in \eqref{eqn:ass_domain} ahead.

\subsubsection{Projected Clipped SGD}

We now state and explain the proposed algorithm. As aforementioned, there are two situations to consider, namely 
\begin{tcolorbox}[colback=gray!5!white,colframe=gray!75!black]
\textrm{(S1) we have access to a mechanism which generates fresh collections of i.i.d. samples of a prescribed size from the population $\mu$ which are conditionally independent from the previously generated collections. }
\\

\textrm{(S2) we have a fixed dataset $\mathcal D=(x_i)_{i=1}^n$ of samples from $\mu$ and have access to a mechanism which generates fresh collections of i.i.d. samples from the corresponding empirical measure $\hat\mu_n$ which are conditionally independent from the previous collections and from the original dataset.}
\end{tcolorbox}

\begin{remark}[Generating batches under (S2)]
    Given a fixed dataset $\mathcal D$, a standard approach for generating batches of samples  satisfying the conditions of (S2) consists of simply resampling $m$ samples from $\mathcal D$ uniformly at random with replacement. That is, we generate an i.i.d. collection of indices $\{\sigma_1,\dots, \sigma_m\}$ from the uniform distribution on $\{1,\dots, n\}$ which is independent from previous draws and the original dataset, and take the mini-batch to be $\{x_{\sigma_1},\dots,x_{\sigma_m}\}$. With this, $\{x_{\sigma_1},\dots,x_{\sigma_m}\}$ is conditionally i.i.d. from $\hat \mu_n$ and independent  from the previously drawn mini-batches conditionally on $\mathcal D$ so that \textrm{(S2)} holds.     
\end{remark}

Notably, when applying \cref{alg:clippedSGD}  ahead using mini-batched gradients with collections generated according to \textrm{(S1)} the aim is to approximately solve \eqref{eqn:reformulated}, i.e., the dual MEM problem with prior $\bar \mu=\mu$. If the collections are generated according to \textrm{(S2)}, we approximately solve  \eqref{eqn:reformulated_empirical}, i.e.,    
the dual MEM problem with prior $\bar \mu=\hat \mu_n$. Here and in the sequel, we use $\bar \mu$ to denote the population prior $\mu$  under (S1) or the empirical prior $\hat \mu_n$ under (S2) to simplify notation. To clearly state and motivate the projected clipped SGD method, we make the following assumption
\begin{tcolorbox} 
\vspace{-1em}
   \begin{equation} 
   \tag{A2}
   \exists r>0:\; B_{ r} \subset \inte(\dom(\mathbb E_{\bar \mu}[\ell(X, \cdot)]))=\inte(\dom(g^{\ast}(-(\cdot)/\alpha)))\text{ and }z_{\bar \mu} \in B_{ r}.
    \label{eqn:ass_domain}
    \end{equation}
\end{tcolorbox} 

As noted in \cref{thm:stabilityDual} \ref{eqn:boundedNormOfSolutions}, 
$
\Vert z_{\bar \mu} \Vert \leq \frac{\alpha}{\beta} \big(\min_{v \in  \partial g^{*}(0)}\Vert v \Vert +  \Vert b \Vert  + \Vert A \Vert \vert \mathcal{X} \vert_{\infty} \big)=: r,
$
where we underscore that $\partial g^{\ast}(0)$ is non-empty and bounded as $0\in\inte(\dom(g^{\ast}))$ by assumption, see \cite[Theorem 23.4]{rockafellar1970convex}. It follows that the above assumption holds if $\inte(B_{\bar r/\alpha})\subset \inte(\dom(g^{\ast}(\cdot)))$ for some $r<\bar r$ which is equivalent to the following condition on $g$ itself. 
\begin{tcolorbox}
  The fidelity term $g$ satisfies
    $    \liminf_{\|z\|\to \infty}\frac{g(z)}{\|z\|}\geq \frac{\bar r}{\alpha}$ for some $\bar r>r$. %
\end{tcolorbox} 
Thus, if $\lim_{\|z\|\to \infty}\frac{g(z)}{\|z\|}=\infty$, i.e., $g$ is supercoercive, \eqref{eqn:ass_domain} is satisfied. For instance, the prototypical fidelity term $g=\frac{1}{2}\|\cdot\|^2$ is supercoercive.  
We now state and prove this equivalence. 

\begin{proposition} \label{lem:technicalBall}
    Suppose that $h:\mathbb R^d\to\overline{\mathbb R}$ is  proper, lsc, and convex. Then, $\mathrm{int}(\dom(h^{\ast}))$ contains the open ball of radius $\bar r>0$ centered at $0$ if and only if $\liminf_{\|z\|\to \infty}\frac{h(z)}{\|z\|}\geq \bar r.$
\end{proposition}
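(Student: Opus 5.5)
The plan is to prove both directions directly from the definition of the conjugate, using the biconjugation identity $h = h^{**}$. That identity holds because $h$ is proper, lsc and convex; the same hypotheses also make $h^{*}$ proper, lsc and convex. No recession-function machinery should be needed. The two directions are a lower bound on $h$ obtained from boundedness of $h^{*}$ on balls, and, conversely, a finiteness bound on $h^{*}$ obtained from linear growth of $h$.

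\emph{($\Rightarrow$)} Assume the open ball of radius $\bar r$ centred at $0$ lies in $\mathrm{int}(\dom(h^{*}))$, and fix $r<\bar r$. Then the closed ball $B_r$ is a compact subset of $\mathrm{int}(\dom(h^{*}))$. A proper convex function is continuous on the interior of its domain \cite[Theorem 10.1]{rockafellar1970convex}, so $h^{*}$ is bounded above on $B_r$ by some constant $M_r<\infty$. By biconjugation, for every $z$,
\[
h(z)=h^{**}(z)\ \ge\ \sup_{\|y\|\le r}\big\{\langle y,z\rangle - h^{*}(y)\big\}\ \ge\ r\|z\| - M_r ,
\]
where the last step takes $y = r z/\|z\|$ for $z\neq 0$. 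Dividing by $\|z\|$ gives $\liminf_{\|z\|\to\infty} h(z)/\|z\| \ge r$. Letting $r\uparrow \bar r$ yields the claim.

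\emph{($\Leftarrow$)} Assume $\liminf_{\|z\|\to\infty} h(z)/\|z\|\ge \bar r$, and fix $y$ with $\|y\|<\bar r$. Choose $r$ with $\|y\|<r<\bar r$; by the liminf assumption there is $R>0$ such that $h(z)\ge r\|z\|$ whenever $\|z\|\ge R$. I then split the supremum defining $h^{*}(y)$ into two regions.
\begin{itemize}
\item On $\{\|z\|\ge R\}$: here $\langle y,z\rangle - h(z)\le (\|y\|-r)\|z\|\le 0$.
\item On the compact ball $B_R$: $h$ is lsc and never takes the value $-\infty$ (it is proper), so it attains its minimum on $B_R$, which is some finite value $-c$. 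Hence $\langle y,z\rangle - h(z)\le \|y\|R + c$ there.
\end{itemize}
Therefore $h^{*}(y)\le \max\{0,\|y\|R+c\}<\infty$. So the open ball of radius $\bar r$ is contained in $\dom(h^{*})$. Since this ball is open, it is contained in $\mathrm{int}(\dom(h^{*}))$.

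The only delicate points are the two boundedness facts: $h^{*}$ bounded above on compact subsets of the interior of its domain (first direction), and $h$ bounded below on compact sets (second direction). Both follow from standard properties of proper lsc convex functions. The passage from $r$ to $\bar r$ is handled by working with every $r<\bar r$ in the first direction, and by the strict inequality $\|y\|<\bar r$ in the second.
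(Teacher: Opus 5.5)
Your proof is correct, and it takes a more elementary route than the paper. The paper works through recession (horizon) functions. For the ``if'' direction it uses \cite[Corollary 13.3.4(c)]{rockafellar1970convex} to characterize $y\in\mathrm{int}(\dom(h^{\ast}))$ by positivity of the recession function of $h-\langle\cdot,y\rangle$. It then converts this, via \cite[Theorem 3.26(a)]{rockafellar1998variational}, into boundedness below on bounded sets together with $\liminf_{\|w\|\to\infty}(h(w)-\langle w,y\rangle)/\|w\|>0$. For the ``only if'' direction it identifies the horizon function $h^{\infty}$ with the support function of $\dom(h^{\ast})$ \cite[Theorem 11.5]{rockafellar1998variational}, which gives $\liminf_{\|w\|\to\infty}h(w)/\|w\|=\inf_{\|w\|=1}\sup_{v\in\dom(h^{\ast})}\langle w,v\rangle$.

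You instead work directly from the definition of the conjugate. In one direction, boundedness of $h^{\ast}$ on $B_r$ plus the Fenchel--Young inequality gives the explicit minorant $h(z)\geq r\|z\|-M_r$. In the other, linear growth of $h$ outside $B_R$ and lower semicontinuity on $B_R$ bound $h^{\ast}(y)$ directly, and openness of the ball upgrades $\dom(h^{\ast})$ to $\mathrm{int}(\dom(h^{\ast}))$.

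What each route buys:
\begin{itemize}
\item The paper's version is shorter given the citations. It also places the result in its natural framework: the growth rate of $h$ at infinity in direction $w$ equals the support function of $\dom(h^{\ast})$ at $w$.
\item Your version is self-contained and slightly more general. The inequality $h(z)\geq\langle y,z\rangle-h^{\ast}(y)$ holds for any proper $h$, so biconjugation is not actually needed. The converse uses only that $h$ is lsc and proper. Convexity of $h$ is therefore never used.
\end{itemize}

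One cosmetic point: the minimum of $h$ over $B_R$ need not be finite. It equals $+\infty$ if $B_R\cap\dom(h)=\emptyset$, but then the supremum over $B_R$ is $-\infty$ and your bound on $h^{\ast}(y)$ still holds.
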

\begin{proof}
By 
\cite[Corollary 13.3.4 (c)]{rockafellar1970convex}, $z\in\mathrm{int}\left(\dom\left(h^{\ast}\right)\right)$ if and only if the recession function of $h-\langle \cdot,z\rangle$ is strictly greater than $0$ on $\mathbb R^d\backslash\{0\}$.  
Following  \cite[Theorem 3.26 (a)]{rockafellar1998variational}, this latter condition is equivalent to the property that  $h-\langle \cdot,z\rangle$ is bounded below on bounded sets and \[
\liminf_{\|w\|\to \infty}\frac{h(w)-\langle w,z\rangle}{\|w\|}>0.\]

Now, suppose that $\liminf_{\|w\|\to \infty}\frac{h(w)}{\|w\|}\geq \bar r>0$, then, for any $z\in\mathbb R^d$ with $\|z\|<\bar r$,  
\[
\liminf_{\|w\|\to \infty}\frac{h(w)-\langle w,z\rangle}{\|w\|}\geq \bar r-\limsup_{\|w\|\to \infty}\frac{\langle w,z\rangle}{\|w\|}>\bar r-\bar r=0.
\]
Evidently $h-\langle \cdot,z\rangle$ is bounded below on any bounded set 
so that $z\in\mathrm{int}(\dom(h^{\ast}))$ by the previous deliberations. Thus the open ball of radius $\bar r$ is contained in $\mathrm{int}(\dom(h^{\ast}))$.

On the other hand, if the open ball of radius $\bar r$ is contained in $\mathrm{int}(\dom(h^{\ast}))$, we have from \cite[Theorems 3.26 and 11.5]{rockafellar1998variational} that, for any $r'<\bar r$,
\[
\liminf_{\|w\|\to \infty}\frac{h(w)}{\|w\|}=\inf_{\|w\|=1}\sup_{v\in\dom(h^{\ast})}\langle w,v\rangle\geq r'\inf_{\|w\|=1}\|w\|=r', 
\]
since, for any $w\in\mathbb R^d$, $r'\frac{w}{\|w\|}\in\dom(h^{\ast})$ and so $\sup_{v\in\dom(h^{\ast})}\langle w,v\rangle\geq r'\|w\|$. Taking the limit $r'\to \bar r$ above proves the claimed result.
\end{proof}

With this explanation of the projection step in hand, we now state the algorithm:
 
\begin{algorithm}[!htb]
\DontPrintSemicolon
\caption{Projected Clipped SGD}\label{alg:clippedSGD}
\KwData{Fix a radius $r$ satisfying  \eqref{eqn:ass_domain}. Initialize $z_{0}\in B_{r}$, a stepsize schedule $(\beta_{k})_{k\in\mathbb N}$, a sequence of  batch sizes $(m_{k})_{k\in\mathbb N}$, and  a clipping parameter $\gamma>0$.}
\For{$k = 0,1,2,3 \ldots$}{
    Draw a collection of samples $x_{k_{1}}, \ldots , x_{k_{m_k}}$ according to \textrm{(S1)} or \textrm{(S2)} \;
    $g_{k} \gets \frac{1}{m_{k}} \sum_{i=1}^{m_{k}}\ell'( x_{k_{i}},z_{k})$, for a measurable selection  $\ell'( x_{k_{i}},z_{k}) \in \partial_z \ell( x_{k_{i}},z_{k}) $\;
    $z_{k+1} \gets P_{B_{ r}}\left(z_{k} - \beta_{k} \min \{ 1, \frac{\gamma}{\Vert g_{k} \Vert }\} g_{k}\right) $ \:
}
\end{algorithm}

The main feature of this algorithm is the final line, known as the clipping step, where instead of taking the usual gradient step $-\beta_{k} g_{k} $, the update is enforced to have norm $\beta_k\min\{\|g_k\|,\gamma\}$.  This prevents instability if the gradient has a large magnitude. Our method also includes a projection onto $B_{r}$, denoted $P_{B_{r}},
$ which ensures that the iterates remain in the interior of the domain of the objective; this step is not present in the original algorithm. %
As presented, the algorithm does not include a stopping criterion and, in our examples, is run up to a pre-specified allowance of iterations.

The remainder of this section is devoted to proving the almost sure convergence of \cref{alg:clippedSGD} for the empirical risk formulation of the MEM dual problem. We do so by adapting the proof techniques of \cite{mai2021stability} for the projected version of the CSGD algorithm. As noted previously, the results of that work only appear to apply to objectives with full domain. We begin by compiling some useful properties of $\ell$.

\begin{lemma}[Properties of $\ell$] \label{lem:clipped_Sgd_properties}
    Let $g,r$ satisfy assumptions \eqref{eq:ass} and \eqref{eqn:ass_domain} and, for $x\in \mathcal X$, let 
    \begin{equation}
        \ell'(x,z) = \left(v(z)-b +Ax\right)\exp(\alpha g^{\ast}(-z/\alpha)-\langle b,z\rangle +\langle A^{\top}z,x\rangle), \label{eqn:decompositionSubdiff}
        \end{equation}
     where $v(z)\in -\partial g^{\ast}(-z/\alpha)$ is independent of $x$ and $z\in B_r\mapsto v(z)$ is measurable. Then $\ell'(x,z)$ is a measurable selection of subgradient of $\ell(x,\cdot)$ at $z\in B_r$ and, if $X\sim\bar \mu$, the following hold:
    \item[(P1)] (Unbiased gradient estimates). $\E[\ell'(X,z)] \in \partial \E [\ell(X,z)] $.
        \item[(P2)] \label{P2} (Quadratic growth). Let $\{z_{\bar \mu}\} = \argmin_{\mathbb R^d} \E[\ell(X,\cdot)]$. There exists $a>0$ for which
        \begin{equation*}
           \E [\ell(X,z) ]- \E [\ell(X,z_{\bar \mu})] \geq a  \Vert z - z_{\bar \mu} \Vert^{2} \text{ for all $z \in B_{r}$}
        \end{equation*}
        \item[(P3)] \label{P3} (Finite variance). There exists $\sigma >0$ such that
        \begin{equation*}
            \E[\Vert \ell'(X,z) - \E[\ell'(X,z)] \Vert^{2} ] \leq \sigma^{2} \text{ for all } z \in B_{r}.
        \end{equation*}
\end{lemma}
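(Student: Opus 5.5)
The plan is to work on $B_r$, which by \eqref{eqn:ass_domain} lies in $\inte(\dom(g^{\ast}(-\cdot/\alpha)))$. There $g^{\ast}(-\cdot/\alpha)$ is finite and continuous, hence bounded on the compact set $B_r$, and $\partial g^{\ast}(-z/\alpha)$ is nonempty, compact and uniformly bounded over $z\in B_r$ (local boundedness of the subdifferential on the interior of the domain, \cite[Theorem 24.7]{rockafellar1970convex}).

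\textbf{Subgradient claim.} Write $\ell(x,\cdot)=e^{h_x}$ with $h_x(z)=\alpha g^{\ast}(-z/\alpha)-\langle b,z\rangle+\langle A^{\top}z,x\rangle$. By \eqref{eq:ass}, $h_x$ is proper, lsc and $\beta/\alpha$-strongly convex. For $z$ in the interior of the domain, the sum rule gives $\partial h_x(z)=-\partial g^{\ast}(-z/\alpha)-b+Ax$. \Cref{lem:strongConvexityExponential} then gives $\partial_z\ell(x,z)=e^{h_x(z)}\partial h_x(z)$, so $\ell'(x,z)$ in \eqref{eqn:decompositionSubdiff} is a subgradient. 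For measurability, note that $(x,z)\mapsto\ell'(x,z)$ is a product and sum of measurable maps: $v$ is measurable by hypothesis, and $\exp\circ h_x(z)$ is continuous in $(x,z)$ on $\mathcal X\times B_r$.

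\textbf{(P1).} Since $v(z)$ does not depend on $x$, we have
\[
\E[\ell'(X,z)]=\E[\ell(X,z)]\bigl(v(z)-b\bigr)+e^{\alpha g^{\ast}(-z/\alpha)-\langle b,z\rangle}A\int u\,e^{\langle A^{\top}z,u\rangle}d\bar\mu(u).
\]
Setting $F(z)=\E[\ell(X,z)]=e^{\phi_{\bar\mu}(z)}$, the chain rule of \Cref{lem:strongConvexityExponential} applied to $h=\phi_{\bar\mu}$ gives
\[
\partial F(z)=e^{\phi_{\bar\mu}(z)}\bigl(-\partial g^{\ast}(-z/\alpha)-b+A\nabla L_{\bar\mu}(A^{\top}z)\bigr).
\]
Using $e^{\phi_{\bar\mu}(z)}=e^{\alpha g^{\ast}(-z/\alpha)-\langle b,z\rangle}\int e^{\langle A^{\top}z,\cdot\rangle}d\bar\mu$ together with \eqref{eqn:GradLogMGF}, the expression for $\E[\ell'(X,z)]$ is exactly the element of $\partial F(z)$ obtained by choosing $v(z)$. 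Exchanging integration with the factors is justified because $\mathcal X$ is compact and all integrands are bounded.

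\textbf{(P2).} By \Cref{lem:strongConvexityExponential}, $F=e^{\phi_{\bar\mu}}$ is $\gamma'$-strongly convex with $\gamma'=\frac{\beta}{\alpha}e^{\inf\phi_{\bar\mu}}>0$; the infimum is finite since $\phi_{\bar\mu}$ is strongly convex and attains its minimum. \Cref{prop:strongConvexity}(ii) then gives (P2) with $a=\gamma'/2$, and this holds on all of $\mathbb R^d$, in particular on $B_r$.

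\textbf{(P3).} We bound the second moment uniformly: $\E\|\ell'-\E\ell'\|^2\le\E\|\ell'\|^2\le\sup_{x\in\mathcal X,z\in B_r}\|\ell'(x,z)\|^2$. Here $\|v(z)-b+Ax\|\le\sup_{z\in B_r}|\partial g^{\ast}(-z/\alpha)|_\infty+\|b\|+\|A\||\mathcal X|_\infty$. For the exponent,
\[
\alpha g^{\ast}(-z/\alpha)-\langle b,z\rangle+\langle A^{\top}z,x\rangle\le\alpha\max_{B_r}g^{\ast}(-\cdot/\alpha)+r\|b\|+r\|A\||\mathcal X|_\infty.
\]
Both quantities are finite, which yields $\sigma$.

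\textbf{Main obstacle.} The delicate point is the uniform finiteness over $B_r$ of $g^{\ast}(-\cdot/\alpha)$ and of its subgradients. This needs $B_r$, a closed ball, to lie inside the \emph{interior} of the domain, which \eqref{eqn:ass_domain} provides. Compactness of $B_r$ together with continuity of convex functions on the interior of their domain, and local boundedness of $\partial$ there, then closes the argument.
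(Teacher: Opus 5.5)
Your proof is correct, and for (P1) and (P2) it takes a different route from the paper.

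\textbf{The paper's route: pointwise in $x$.} For (P1) the paper integrates the subgradient inequality $\ell(x,z')-\ell(x,z)\geq\langle \ell'(x,z),z'-z\rangle$ over $x$. For (P2) it shows that each $e^{h_x}$ is strongly convex with modulus $\gamma_x=\frac{\beta}{\alpha}e^{\inf h_x}$. It then averages the strong-convexity inequality, and needs a separate parabola argument to bound $\inf_{\mathbb R^d}h_x$ from below uniformly in $x$.

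\textbf{Your route: the aggregated function.} You use the identity $\E[\ell(X,\cdot)]=e^{\phi_{\bar\mu}}$ and apply \Cref{lem:strongConvexityExponential} once, to $\phi_{\bar\mu}$. For (P1) you identify $\E[\ell'(X,z)]$ with the element $e^{\phi_{\bar\mu}(z)}\bigl(v(z)-b+A\nabla L_{\bar\mu}(A^{\top}z)\bigr)$ of the chain-rule subdifferential. For (P2) you get the modulus $\frac{\beta}{\alpha}e^{\inf\phi_{\bar\mu}}$ directly. Its positivity only requires that $\phi_{\bar\mu}$ attain its minimum, so no uniform-in-$x$ bound is needed.

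\textbf{What each route buys.} Yours is shorter, and its constant is at least as good, since $e^{\inf\phi_{\bar\mu}}=\inf_z\E[e^{h_X(z)}]\geq \E[e^{\inf h_X}]$. The price is that your (P1) depends on $v(z)$ being independent of $x$ (so it can be factored out of the integral) and on the sum rule for $\partial\phi_{\bar\mu}$. The paper's integrated subgradient inequality works for any measurable selection and any convex integrand. The dependence on independence of $v$ from $x$ is swapped between the two proofs: the paper uses it in its (P3) decomposition, whereas your cruder bound $\E\|\ell'-\E\ell'\|^2\leq\sup_{\mathcal X\times B_r}\|\ell'\|^2$ does not need it. Your joint-continuity argument for measurability in $(x,z)$ is also somewhat more complete than the paper's remark, which only addresses $\ell'(\cdot,z)$ for fixed $z$.
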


\begin{proof}
We begin by proving that $\ell'(x,z)$ corresponds to a measurable selection of subgradient. From \cref{lem:strongConvexityExponential}, for each $x\in\mathcal X$ and $z \in B_{r} \subset \inte \left(\dom \left(\ell(x,\cdot)\right)\right)$,
    \begin{align*}
        \partial_z \ell(x,z) &= \left(-\partial g^{\ast}(-z/\alpha)-b +Ax\right)\exp(\alpha g^{\ast}(-z/\alpha)-\langle b,z\rangle +\langle A^{\top}z,x\rangle).
    \end{align*}
    Hence, $\ell'(x,z)$ from \eqref{eqn:decompositionSubdiff} corresponds to a certain choice of subgradient of $\ell(x,\cdot)$. For fixed $z, v$, $\ell'(\cdot,z)$ is a continuous function on the compact set $\mathcal{X}$ and is, in particular, bounded and measurable and, in fact, integrable.
    
    \noindent \smallskip (P1)  
     Given that $\ell'(x,z)\in\partial_z\ell(x,z)$, 
    \begin{equation*}
        \ell(x,z') - \ell(x,z) \geq \langle \ell'(x,z) , z'-z \rangle,\text{ for each }z' \in \R^{d}.
    \end{equation*}
    Taking expectations on both sides of the inequality, we obtain that
    \begin{equation*}
        \E[\ell(X,z')] - \E[\ell(X,z)] \geq \langle \E [\ell'(X,z)] , z'-z \rangle, \text{ for each }z'\in\mathbb R^d. 
    \end{equation*}
    This demonstrates that $\E[\ell'(X,z)]$ is a subgradient of $\E[\ell(X,\cdot)]$ at $z$ as desired. 
    
    \noindent \smallskip (P2)  Fix $z \in B_{r}$, and let $z_{\bar \mu} \in B_{r}$ be the unique minimizer of $\E[\ell(X,z)]$. Then
    \begin{align*}
        \E[\ell(X,z)] - \E[\ell(X,z_{\bar \mu})] &= \E[ \exp h_{X}(z)] - \mathbb E[\exp h_{X}(z_{\bar \mu})],
    \end{align*}
    where $h_{x}(z) := \alpha g^{\ast}(-z/\alpha)-\langle b,z\rangle+\langle A^{\top}z,x\rangle$ for $x\in \spt(\bar \mu)$. This function is proper, lsc, and $\beta/\alpha$-strongly convex for all $x$ (cf. \eqref{eq:ass}), so that, for 
    \[
    \gamma_x:=\frac{\beta}{\alpha}e^{\inf_{\mathbb R^d}h_x},
    \]
    the function   $\exp{h_{x}}$ is $\gamma_x$-strongly convex in $z$ by \cref{lem:strongConvexityExponential}.
    Thus, for each $\lambda\in[0,1]$ and any $z,z'\in B_r$, 
    \[
       \exp{h_{x}}\left( \lambda z +(1-\lambda)z'\right) \leq 
       \lambda \exp{h_{x}}( z) + (1-\lambda)\exp{h_x}(z')-\frac{\gamma_x}{2} \lambda(1-\lambda)\|z-z'\|^2.  
    \]
    Taking the expectation on both sides of the inequality shows that $\mathbb E[\ell(X,\cdot)]$ is strongly convex with modulus $\mathbb E[\gamma_X]\geq \frac{\beta}{\alpha}e^{\inf_{x\in \spt(\bar \mu)}\inf_{\mathbb R^d}h_x}>0$. This final lower bound is due to the fact that, setting $\bar z$ as the unique global minimizer of $g^{\ast}(-\cdot/\alpha)$,  
    \[ 
    \begin{aligned}
        h_x(z)&\geq \alpha g^{\ast}(-\bar z/\alpha)+\frac{\beta}{2\alpha}\|z-\bar z\|^2 -\|z\|\|b\|-\|A\||\mathcal X|_{\infty} \|z\|
        \\
        &\geq \alpha g^{\ast}(-\bar z/\alpha)+\frac{\beta}{2\alpha}(\|z\|^2-2\|z\|\|\bar z\|+\|\bar z\|^2) -\|z\|\|b\|-\|A\||\mathcal X|_{\infty} \|z\|, 
    \end{aligned} 
    \]
    where the final lower bound is a convex parabola in $\|z\|$ which is independent of $x$ and hence admits a finite minimum.
    By \eqref{eqn:strongConvexVer2} and using the fact that $z_{\bar \mu}$ is the minimizer of $\mathbb E[\ell(X,\cdot)]$,
    \[
        \mathbb E[\ell(X,z)]- \mathbb E[\ell(X,z_{\bar \mu})]\geq \frac{\mathbb E[\gamma_X]}{2}\|z-z_{\bar \mu}\|^2,
    \]
    proving the claim.

    \noindent \smallskip (P3) Given $z \in B_{r}$, we have from \eqref{eqn:decompositionSubdiff} that 
    \[
    \ell'(x,z) = \left(v(z)-b +Ax\right)\exp(\alpha g^{\ast}(-z/\alpha)-\langle b,z\rangle +\langle A^{\top}z,x\rangle)
    \]
    for some 
     $v(z) \in - \partial g^{*}(-z/\alpha)$. Thus, 
    \begin{align*}
        &\ell'(x,z) - \E [\ell'(X,z)] \\
        &= (v(z)\mspace{-2mu}-\mspace{-2mu}b) \left( \exp(\alpha g^{\ast}(-z/\alpha)\mspace{-2mu}-\mspace{-2mu}\langle b,z\rangle\mspace{-2mu} +\mspace{-2mu}\langle A^{\top}z,x\rangle)\mspace{-2mu} -\mspace{-2mu} \E \left[ \exp(\alpha g^{\ast}(-z/\alpha)\mspace{-2mu}-\mspace{-2mu}\langle b,z\rangle\mspace{-2mu} +\mspace{-2mu}\langle A^{\top}z,X\rangle) \right] \right) \\
        &+ A \left( x\exp(\alpha g^{\ast}(-z/\alpha)-\langle b,z\rangle +\langle A^{\top}z,x\rangle)- \E \left[ X\exp(\alpha g^{\ast}(-z/\alpha)-\langle b,z\rangle +\langle A^{\top}z,X\rangle) \right] \right). 
    \end{align*}
    Taking the squared norm on both sides of the equality followed by the expectation, we have
    \begin{align*}
        \mathbb E\left[\Vert \ell'(X,z) - \E [\ell'(X,z)] \Vert^{2}\right] &\leq 2 \|v(z)-b\|^2\var\left(\exp(\alpha g^{\ast}(-z/\alpha)-\langle b,z\rangle+\langle A^{\top}z,X\rangle)\right) \\
        & + 2\|A\|^2\tr\left(\cov\left(X\exp(\alpha g^{\ast}(-z/\alpha)-\langle b,z\rangle+\langle A^{\top}z,X\rangle)\right)\right).
    \end{align*}
    Applying standard bounds for the variance and trace of the covariance, 
    \[
    \begin{aligned}
           \mathbb E\left[\Vert \ell'(X,z) - \E [\ell'(X,z)] \Vert^{2}\right] &\leq 2 \|v(z)-b\|^2\mathbb E\left[\exp(\alpha g^{\ast}(-z/\alpha)-\langle b,z\rangle+\langle A^{\top}z,X\rangle)^2\right] \\
        & + 2\|A\|^2\mathbb E\left[\left\|X\exp(\alpha g^{\ast}(-z/\alpha)-\langle b,z\rangle+\langle A^{\top}z,X\rangle)\right\|^2\right]
    \end{aligned} 
    \]
    and so 
    \[
    \begin{aligned}&
            \mathbb E\left[\Vert \ell'(X,z) - \E [\ell'(X,z)] \Vert^{2}\right] \\&\leq 2 \left(\sup_{v\in -\partial g^{\ast}(-z/\alpha)}\|v-b\|^2 +\|A\|^2|\mathcal X|_{\infty}^2\right)\exp\left(2\left(\sup_{z\in B_r}\alpha g^{\ast}(-z/\alpha)+\|b\|r+|\mathcal X|_{\infty}\|A\|r \right)\right). %
    \end{aligned} 
    \]
Note that since 
 $B_{\frac{r}{\alpha}}\subset \inte(\dom g^{\ast})$ by assumption, \cite[Theorem 24.7]{rockafellar1970convex} asserts that $\partial g^{\ast}(B_{\frac{r}{\alpha}}) = \cup_{z \in B_{\frac{r}{\alpha}}} \partial g^{*}(z)$ is bounded. Therefore $\sup_{z\in B_r}\sup_{v\in -\partial g^{\ast}(-z/\alpha)}\|v-b\|^2<\infty$ and, moreover, it holds that
 $\sup_{z \in B_{r}} g^{\ast}(-z/\alpha) < + \infty$ as any convex function is continuous on the interior of its domain \cite[Theorem 10.1]{rockafellar1970convex} and $B_{r}$ is compact. It follows that $\mathbb E\left[\Vert \ell'(X,z) - \E [\ell'(X,z)] \Vert^{2}\right]\leq \sigma^2$ for a finite constant $\sigma^2>0$. 
\end{proof}

 We  note that a measurable selection of subgradient always exists under (A2) (cf. e.g., \cite[Corollary 14.6 and Theorem 14.56]{rockafellar1998variational}) and, in cases where $g^{\ast}$ is differentiable, the subdifferential is a singleton so that no choice is required. In any case subgradient selection rule which can be expressed in terms of compositions of Borel measurable operations will be measurable recalling \cref{rmk:measurableSelection}. Furthermore by taking measurable subgradients it is guaranteed that the iterates of the \cref{alg:clippedSGD} are also measurable.

The properties proved in \cref{lem:clipped_Sgd_properties} are paramount in showing the convergence of  \cref{alg:clippedSGD} as stated next.

\begin{theorem}[Convergence of \cref{alg:clippedSGD}]
\label{thm:convSGD} 
    Let $g$ and $r$ satisfy assumptions \ref{eq:ass} and \ref{eqn:ass_domain} and 
    suppose that the stepsizes $\beta_{k} \geq 0$ satisfy $\sum_{k=0}^{\infty}\beta_{k}= +\infty$, $\sum_{k=0}^{\infty} \beta_k^2 <\infty$ and $\sum_{k=0}^{\infty} \beta_{k}/m_{k} < + \infty$ Then, any sequence of iterates $z_{k}$ generated by \cref{alg:clippedSGD} as applied to $f(x,z) = \ell(x,z)$ converges almost surely to $\{z_{\bar \mu}\} = \argmin_{\mathbb R^d} \E_{X\sim\bar\mu}[\ell(X,\cdot)].$
\end{theorem}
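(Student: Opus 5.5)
We will follow the Robbins--Siegmund supermartingale approach used in \cite{mai2021stability}, modified for the projection step. Write $F(z):=\E_{X\sim\bar\mu}[\ell(X,z)]$, $\bar g_k:=\E[g_k\mid\mathcal F_k]$, where $\mathcal F_k$ is the $\sigma$-algebra generated by $z_0,\dots,z_k$ (and by the dataset under (S2)). Set $\lambda_k:=\min\{1,\gamma/\|g_k\|\}$. Conditioning on $\mathcal F_k$, the fresh batch is i.i.d.\ from $\bar\mu$. Since $v(z_k)$ does not depend on $x$, we get $\bar g_k=\E[\ell'(X,z_k)]\in\partial F(z_k)$ by (P1). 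By (P3) and independence within the batch, $\E[\|g_k-\bar g_k\|^2\mid\mathcal F_k]\le\sigma^2/m_k$. By (A2), $z_{\bar\mu}\in B_r$, so $P_{B_r}(z_{\bar\mu})=z_{\bar\mu}$. Nonexpansiveness of the projection then gives
\[
\|z_{k+1}-z_{\bar\mu}\|^2\le\|z_k-z_{\bar\mu}\|^2-2\beta_k\lambda_k\langle g_k,z_k-z_{\bar\mu}\rangle+\beta_k^2\gamma^2,
\]
using $\|\lambda_kg_k\|\le\gamma$.

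The main obstacle is the cross term, because $\lambda_k$ is a random factor correlated with $g_k$. Conditional unbiasedness therefore cannot be applied directly. Our first step is to decompose $\lambda_kg_k=\bar\lambda_k\bar g_k+(\lambda_kg_k-\bar\lambda_k\bar g_k)$ with $\bar\lambda_k:=\min\{1,\gamma/\|\bar g_k\|\}$. The map $u\mapsto\min\{1,\gamma/\|u\|\}u$ is the projection onto $B_\gamma$, hence $1$-Lipschitz, so $\|\lambda_kg_k-\bar\lambda_k\bar g_k\|\le\|g_k-\bar g_k\|$. Taking conditional expectations and using $\|z_k-z_{\bar\mu}\|\le 2r$, we obtain
\[
\E[\|z_{k+1}-z_{\bar\mu}\|^2\mid\mathcal F_k]\le\|z_k-z_{\bar\mu}\|^2-2\beta_k\bar\lambda_k\langle\bar g_k,z_k-z_{\bar\mu}\rangle+4r\beta_k\sigma/\sqrt{m_k}+\beta_k^2\gamma^2.
\]
The hypothesis $\sum\beta_k/m_k<\infty$ is stated for $1/m_k$ rather than $1/\sqrt{m_k}$. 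We therefore intend to instead bound $\beta_k\|g_k-\bar g_k\|\cdot 2r\le \beta_k^2+ r^2\|g_k-\bar g_k\|^2$-type splittings, or use $\beta_k\|\cdot\|\le\frac{\beta_k}{2}(\epsilon+\|\cdot\|^2/\epsilon)$ carefully. Alternatively, we will use that clipping is inactive in the noise term's mean and mimic Mai--Johansson's bound. In either case the goal is a summable error of order $\beta_k^2+\beta_k/m_k$. We regard this as the delicate step and plan to follow \cite{mai2021stability} closely there.

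Next, we bound $\bar\lambda_k$ from below uniformly. By (P1) and the compactness argument in (P3) (boundedness of $\partial g^*$ on $B_{r/\alpha}$, continuity of $g^*$ there), $\sup_{z\in B_r}\|\E\ell'(X,z)\|\le G<\infty$, so $\bar\lambda_k\ge c:=\min\{1,\gamma/G\}>0$. Subgradient monotonicity together with (P2) gives $\langle\bar g_k,z_k-z_{\bar\mu}\rangle\ge F(z_k)-F(z_{\bar\mu})\ge a\|z_k-z_{\bar\mu}\|^2\ge0$.

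We then apply the Robbins--Siegmund theorem. It yields almost sure convergence of $\|z_k-z_{\bar\mu}\|^2$ and $\sum_k\beta_k c\,a\|z_k-z_{\bar\mu}\|^2<\infty$ almost surely. Since $\sum\beta_k=\infty$, we get $\liminf\|z_k-z_{\bar\mu}\|=0$. Combined with convergence of the norms, this gives $z_k\to z_{\bar\mu}$ almost surely. Measurability of the iterates follows from the measurable subgradient selection remark, which justifies the conditional expectations throughout.
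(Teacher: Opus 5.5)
Your setup is sound, and two of your ingredients are good: the identification of $u\mapsto\min\{1,\gamma/\|u\|\}u$ with the projection onto $B_\gamma$, which gives $\|\lambda_kg_k-\bar\lambda_k\bar g_k\|\le\|g_k-\bar g_k\|$, and the deterministic lower bound $\bar\lambda_k\ge c$. \textbf{The gap is the step you defer, and none of your suggested routes closes it.}

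Once you bound $\|z_k-z_{\bar\mu}\|\le 2r$, the error term is $2r\beta_k\E[\|g_k-\bar g_k\|\mid\mathcal F_k]$. This is in general of order $\beta_k/\sqrt{m_k}$, and that is not summable under the hypotheses. For example, take $\beta_k=1/(k+2)$ and $m_k=\lceil\log^2(k+2)\rceil$. Then $\sum_k\beta_k=\infty$, $\sum_k\beta_k^2<\infty$ and $\sum_k\beta_k/m_k<\infty$, but $\sum_k\beta_k/\sqrt{m_k}=\infty$.

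Your fallbacks fail as well:
\begin{itemize}
\item The split $2r\beta_k\|g_k-\bar g_k\|\le\beta_k^2+r^2\|g_k-\bar g_k\|^2$ leaves $r^2\sigma^2/m_k$. This needs $\sum_k1/m_k<\infty$, which is false in the same example.
\item The split $\beta_kx\le\frac{\beta_k}{2}(\epsilon+x^2/\epsilon)$ with fixed $\epsilon$ leaves $\sum_k\beta_k\epsilon/2=\infty$. Optimizing a deterministic $\epsilon_k$ just returns order $\beta_k/\sqrt{m_k}$ again.
\end{itemize}
So as written, the proposal does not produce a summable $V_k$ for Robbins--Siegmund.

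The missing idea, which the paper uses, is to keep the factor $\|z_k-z_{\bar\mu}\|$ and absorb it into the descent term through the quadratic growth (P2). Pathwise,
\[
2\beta_k\|g_k-\bar g_k\|\,\|z_k-z_{\bar\mu}\|\le\frac{\beta_k}{ca}\|g_k-\bar g_k\|^2+\beta_k ca\|z_k-z_{\bar\mu}\|^2,
\qquad
-2\beta_k\bar\lambda_k\langle\bar g_k,z_k-z_{\bar\mu}\rangle\le-2\beta_k ca\|z_k-z_{\bar\mu}\|^2.
\]
Hence $\E[\|z_{k+1}-z_{\bar\mu}\|^2\mid\mathcal F_k]\le\|z_k-z_{\bar\mu}\|^2-\beta_kca\|z_k-z_{\bar\mu}\|^2+\frac{\beta_k\sigma^2}{ca\,m_k}+\gamma^2\beta_k^2$. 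The error is now summable exactly by $\sum_k\beta_k/m_k<\infty$ and $\sum_k\beta_k^2<\infty$, and your concluding Robbins--Siegmund and $\liminf$ argument goes through. Note that this requires establishing $\bar\lambda_k\ge c$ before this step, not after.

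The paper organizes the same idea slightly differently. It never compares $\lambda_kg_k$ with $\bar\lambda_k\bar g_k$. Instead it applies Young's inequality with $\epsilon=a$ to $\lambda_k|\langle g_k-\bar g_k,z_k-z_{\bar\mu}\rangle|$ and absorbs half into $\lambda_k\langle\bar g_k,z_k-z_{\bar\mu}\rangle$ pathwise; the same random factor appears on both sides, so no lower bound is needed there. It then uses $\lambda_k\le1$ on the noise term, and only at the end lower-bounds $\E[\lambda_k\mid\mathcal F_k]$ via Jensen and the uniform bound on $\|\bar g_k\|$. With the fix above, your projection-based decomposition is a valid and slightly cleaner alternative: it trades that Jensen step for the deterministic bound $\bar\lambda_k\ge c$.
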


\begin{proof} The proof of this result follows similar lines to that of \cite[Theorem 1]{mai2021stability}, but accounts for extended real-valued functions. For notational brevity, we introduce the quantity
$
    \rho_{k} = \min \left\{ 1, \frac{\gamma}{\Vert g_{k} \Vert} \right\}.
$
Then, as $P_{B_{r}}$ is non-expansive,
\begin{align}
    \|z_{k+1}-z_{\bar \mu}\|^2 &= \Vert P_{B_{r}}(z_k-\beta_k\rho_kg_k) - z_{\bar \mu} \Vert \nonumber  \\
    &\leq \|z_k-\beta_k\rho_kg_k-z_{\bar \mu} \|^2 \nonumber\\
    &= \|z_{k}-z_{\bar \mu}\|^2-2\beta_k\rho_k\langle g_k-H_k+H_k,z_k-z_{\bar \mu}\rangle+\beta_k^2\rho_k^2\|g_k\|^2.
\label{eq:iteratesClippedSGD}
\end{align}
Here, $z_k\in B_r$, by the projection step combined with the assumption $z_{0} \in B_{r}$, and $H_k=\mathbb E[g_k|\mathcal F_k]$, where $\mathcal F_k$ is the $\sigma$-algebra generated by the first $k$ batches\footnote{i.e. $\mathcal{F}_{k} =\sigma( x_{0_{1}}, x_{0_2}, \ldots x_{0_{m_{0}}} , \ldots , x_{{k-1}_{1}}, \ldots, x_{{k-1}_{m_{k-1}}} )$ is the smallest $\sigma$-algebra containing the first $k$ batches, where $x_{i_{j}}$ as defined in \cref{alg:clippedSGD} is the $j$-th mini-batch sample at iteration $i$.} under (S1) and, under (S2), $\mathcal F_k$ also includes the dataset $\mathcal D$ as a generator. From \cref{lem:clipped_Sgd_properties} (P2), 
\begin{equation}
\label{eq:P2equation}
   a\|z_k-z_{\bar \mu}\|^2\leq \E [\ell(X,z_{k}) ]- \E [\ell(X,z_{\bar \mu})]\leq \langle H_k,z_k-z_{\bar \mu}\rangle,
\end{equation}
where the second inequality follows from the subgradient inequality for convex functions. Indeed, noting that conditionally on $\mathcal F_k$, $z_k$ is fixed and the mini-batch used to obtain $g_k$ is independent of $\mathcal F_k$, $\mathbb E[g_k|\mathcal F_k]=\mathbb E_{X\sim\bar \mu}[\ell'(X,z_k)]\in\partial(\mathbb E[\ell(X,\cdot)])(z_k)$ by \cref{lem:clipped_Sgd_properties} (P1). Applying the Cauchy-Schwarz and $\varepsilon$-Young inequalities with $\varepsilon=a$, we obtain that 
\begin{align*}
\left|\langle g_k-H_k,z_k-z_{\bar \mu} \rangle \right| &\leq \Vert g_{k} - H_{k} \Vert \cdot \Vert z_{k} - z_{\bar\mu} \Vert\\
&\leq \frac{1}{2a}\|g_k-H_k\|^2+\frac{a}{2}\|z_k-z_{\bar \mu}\|^2 \\
&\leq \frac{1}{2a}\|g_k-H_k\|^2+\frac{1}{2}\langle H_k,z_k-z_{\bar \mu}\rangle, 
\end{align*}
where we have substituted \eqref{eq:P2equation} in the final step. Applying this bound in \eqref{eq:iteratesClippedSGD} yields that  
\[
    \|z_{k+1}-z_{\bar \mu}\|^2\leq \|z_{k}-z_{\bar \mu}\|^2-\beta_k\rho_k\langle H_k,z_k-z_{\bar \mu}\rangle+\frac{\beta_k\rho_k}{a}\|g_k-H_k\|^2+\beta_k^2\rho_k^2\|g_k\|^2
\]
and so, taking the conditional expectation and using the fact that $\rho_k\|g_k\|\leq \gamma$ and $\rho_k\leq 1$,  
\begin{equation}
\label{eq:RSInequality}
\begin{aligned}
   &\mathbb E\Big[\|z_{k+1}-z_{\bar \mu}\|^2\vert \mathcal F_k\Big] 
    \\
    &\leq \|z_{k}-z_{\bar \mu}\|^2-\beta_k \langle H_k,z_k-z_{\bar \mu}\rangle \mathbb E[\rho_k|\mathcal F_k]+\frac{\beta_k}{a}\mathbb E\big[\|g_k-H_k\|^2|\mathcal F_k\big] + \gamma^2\beta_k^2.
\end{aligned}
\end{equation}

We now invoke the classical almost supermartingale convergence theorem of \cite[Theorem 1]{ROBBINS1971233}, which states that if $X_k,Y_k,U_k,V_k,W_k$ are non-negative $\mathcal F_k$-measurable random variables satisfying
\[
\mathbb E[X_{k+1}|\mathcal F_k] \leq X_k(1+U_k)+V_k -W_k, 
\]
for each $k\in\mathbb N$, then $\lim_{k\to \infty}X_k$ exists and is almost surely finite and $\sum_{k=1}^{\infty} W_k<\infty$ almost surely on the event that $\sum_{k=1}^{\infty} U_k$ and $\sum_{k=1}^{\infty} V_k$ are finite. Evidently \eqref{eq:RSInequality} is of this form with $X_k=\|z_k-z_{\bar \mu}\|^2, U_k=0,V_k=\frac{\beta_k}{a}\mathbb E\big[\|g_k-H_k\|^2|\mathcal F_k\big] + \gamma^2\beta_k^2,$ and $W_k=\beta_k \langle H_k,z_k-z_{\bar \mu}\rangle \mathbb E[\rho_k|\mathcal F_k]$ (which was shown to be non-negative in \eqref{eq:P2equation}).  
It follows that there exists an almost surely finite random variable $Z_{\infty}$ for which
\[
   \|z_{k}-z_{\bar \mu}\|^2 \text{ converges a.s. to }Z_{\infty} \text{ and }  \sum_{k=0}^{\infty}\beta_k \langle H_k,z_k-z_{\bar \mu}\rangle \mathbb E[\rho_k|\mathcal F_k]<\infty\text{ a.s.}
\]
on the event that $\sum_{k=0}^{\infty}\left(\frac{\beta_k}{a}\mathbb E\big[\|g_k-H_k\|^2|\mathcal F_k\big] + \gamma^2\beta_k^2\right)<\infty$. We now establish that this event occurs with probability $1$.

To this end, recall from the previous deliberations that, conditionally on $\mathcal F_k$, $z_k$ is fixed and, since the mini-batch samples used for $g_k$ are independent of $\mathcal F_k$ and independent of each other,  
\begin{equation}
\label{eq:varBound}
\begin{aligned}
\mathbb E\big[\|g_k-H_k\|^2|\mathcal F_k\big]
&= \mathbb E\Bigg[\Big\|{m_k}^{-1}\sum_{i=1}^{m_k}\left( \ell'(x_{k_{i}},z_k)-\mathbb E[\ell'(X,z_k)]\right)\Big\|^2\Big|\mathcal F_k\Bigg]
\\
&= \frac{1}{m_k^2}\sum_{i=1}^{m_k}\mathbb E_{X\sim \bar \mu}\Bigg[\Big\| \ell'(X,z_k)-\mathbb E[\ell'(X,z_k)]\Big\|^2\Bigg]\leq \sigma^2/m_k,
\end{aligned}
\end{equation}
where the final inequality is due to 
\cref{lem:clipped_Sgd_properties} (P3). Conclude that 
\[
\sum_{k=0}^{\infty}\left(\frac{\beta_k}{a}\mathbb E\big[\|g_k-H_k\|^2|\mathcal F_k\big] + \gamma^2\beta_k^2\right)\leq \sum_{k=0}^{\infty}\left(\frac{\beta_k\sigma^2}{am_k} + \gamma^2\beta_k^2\right)<\infty 
\]
as follows from the assumptions that $\sum_{k=0}^{\infty}\beta_k^2<\infty$ and $\sum_{k=0}^{\infty}\frac{\beta_k}{m_k}<\infty$. That is, this event occurs with probability $1$. 

It follows that $\sum_{k=0}^{\infty}\beta_k \langle H_k,z_k-z_{\bar \mu}\rangle \mathbb E[\rho_k|\mathcal F_k]<\infty$ a.s. and $\|z_{k}-z_{\bar \mu}\|^2 \text{ converges a.s. to }Z_{\infty}$ which is almost surely finite. It remains to be shown that $Z_{\infty} =0$. Following the proof of \cite[Theorem 1]{mai2021stability} we have that
\begin{equation}
\label{eq:convergenceBd1}
     \mathbb E[\rho_k|\mathcal F_k]\geq \frac{\gamma}{\gamma + \left( \mathbb E[\|g_k\|^2\vert \mathcal F_k]\right)^{\frac 12}}.
\end{equation}
We now note that 
\begin{equation}
\label{eq:convergenceBd2}
    \mathbb E[\|g_k\|^2\vert \mathcal F_k] =\|H_k\|^2+ \mathbb E[\|g_k-H_k\|^2\vert \mathcal F_k]\leq\|H_k\|^2+\frac{\sigma^2}{m_k},
\end{equation}
by applying \eqref{eq:varBound}. By the conditional Jensen's inequality, 
\[
   \Big \|\mathbb E\Big[\frac{1}{m_k}\sum_{i=1}^{m_k} \ell'(x_{k_i},z_k)|\mathcal F_k\Big]\Big\|\leq \mathbb E\Big[\frac{1}{m_k}\sum_{i=1}^{m_k} \|\ell'(x_{k_i},z_k)\||\mathcal F_k\Big] 
\]
and, recalling \eqref{eqn:decompositionSubdiff}, 
\[
\begin{aligned} 
    \|\ell'(x_{k_i},z_k)\| &= \left\|v_{k_i}-b +Ax_{k_i}\right\|\exp(\alpha g^{\ast}(-z_k/\alpha)-\langle b,z_k\rangle +\langle A^{\top}z_k,x_{k_i}\rangle)
    \\
    &\leq  \left(\|v_{k_i}\|+\|b\| +\|A\||\mathcal X|_{\infty}\right)\exp\left(\sup_{B_r}\alpha g^{\ast}(-(\cdot)/\alpha)+\|b\|r +\|A\||\mathcal X|_{\infty} r\right) 
\end{aligned}
\]
where $v_{k_i}\in -\partial g^{\ast}(-z_k/\alpha)$. Combining these two displayed equations, we obtain that 
\[
    \|H_k\|\leq  \Big(\sup_{z\in B_r}\sup_{v\in \partial g^{\ast}(-\frac{z}{\alpha})}\|v\|+\|b\| +\|A\||\mathcal X|_{\infty}\Big)\exp\left(\sup_{B_r}\alpha g^{\ast}(-(\cdot)/\alpha)+\|b\|r +\|A\||\mathcal X|_{\infty} r\right),  
\]
and underscore that $\sup_{z\in B_r}\sup_{v\in \partial g^{\ast}(-\frac{z}{\alpha})}\|v\|<\infty$ as argued in the proof of \cref{lem:clipped_Sgd_properties} (P3). Letting $S$ denote the right-hand side of the above display, insert this bound in \eqref{eq:convergenceBd1} which yields  that 

\[
   \infty>\sum_{k=0}^{\infty}\beta_k \langle H_k,z_k-z_{\bar \mu}\rangle \mathbb E[\rho_k|\mathcal F_k]\geq a\frac{\gamma}{\gamma + \sqrt{S^2+\sigma^2}}\sum_{k=0}^{\infty}\beta_k\|z_k-z_{\bar \mu}\|^2, \text{ almost surely,}   
\]
where we have also applied \eqref{eq:P2equation} and \eqref{eq:convergenceBd2} and used the fact that $m_k\geq 1$. 
It follows that $\sum_{k=0}^{\infty}\beta_k \|z_k-z_{\bar \mu}\|^2<\infty$ almost surely. On the other hand, $\|z_k-z_{\bar \mu}\|^2$ converges a.s. to some $Z_{\infty}<\infty$. 

Suppose now that  $Z_{\infty}>0$ on a set with positive probability (note that $Z_{\infty}$ is non-negative by construction). Then, $Z_{\infty}>0$ and $\|z_k-z_{\bar \mu}\|^2\to Z_{\infty}$, $\sum_{k=0}^{\infty}\beta_k\|z_k-z_{\bar \mu}\|^2 = \infty$ since $\sum_{k=0}^{\infty}\beta_k=+\infty$, but this contradicts the fact that $\sum_{k=0}^{\infty}\beta_k \|z_k-z_{\bar \mu}\|^2<\infty$. Conclude that $Z_{\infty}=0$ with probability $1$ so that $\|z_k-z_{\bar \mu}\|\to 0$ a.s., as required. 
\end{proof}

\section{Numerical Experiments}\label{sec:Numerics}

Having established the convergence of projected clipped SGD we now study the performance of this method on image denoising problems. Throughout we set the fidelity term to $g = \frac{1}{2} \Vert \cdot \Vert_{2}^{2}$ so that $g^{\ast} = \frac{1}{2} \Vert \cdot \Vert_{2}^{2}$ has full domain and $g^{\ast}$ is smooth so that no measurability issues arise in the subgradient selection. Moreover, \eqref{eqn:ass_domain} is satisfied for any choice of $r>0$  such that $z_{\mu} \in B_{r}$. We emphasize that \Cref{thm:stabilityDual} \ref{eqn:boundedNormOfSolutions} ensures that such $r$ is available. 

In the following experiments we set $r=500$ for simplicity and note that the iterates in each run all lie in the interior of this ball. In the sequel, we adopt setting (S2) wherein a dataset $\mathcal D$ of fixed sized $n$ is given and the corresponding prior is $\hat \mu_n$.
In this context, the primal-dual recovery can be accomplished as follows.

\begin{algorithm}
\DontPrintSemicolon
\caption{Primal Solution Recovery}\label{alg:PrimalSolnRecov}
\KwData{Output $z^{*}$ of \cref{alg:clippedSGD}.}
Initialize $\ell_{1} =0 \in \R^{m}$, $\ell_{2} = 0 \in \R$.\;
\For{$k = 1,2,3,\dots, n$}{
    $\ell_{1} \gets \ell_{1} + x_{k} \exp \langle A^{\top} z^{*},x_{k}\rangle$ \;
    $\ell_{2} \gets  \ell_{2} + \exp \langle A^{\top} z^{*},x_{k}\rangle$
}
Return $ x^{\ast}=\ell_{1}/\ell_{2}$.
\end{algorithm}

For our experiments, we apply some common \emph{ad hoc} engineering techniques to improve the performance of the method. %
Concretely, we apply a ``cosine learning rate'',as implemented in, e.g., PyTorch which consists of taking $\beta_{k}$ to behave like a cosine curve and is common in deep learning problems, see the specific hyperparameters for each experiment for explicit expressions. Moreover, for certain larger experiments we incorporate momentum to this algorithm as is standard in  the SGD literature, see \cref{sec:SVHN} for details.

The numerics are arranged as follows. We begin with an experiment on the extended MNIST (EMNIST) dataset \cite{DBLP:journals/corr/CohenATS17} which consists of $28\times 28$ black and white pictures of letters and digits, see \cref{sec:EMNIST} for details and \cref{fig:CSGD5,fig:CSGD7} for the results. We next apply our method to the more realistic Street View House Numbers (SVHN) dataset \cite{netzer2011reading} which consists of $32\times 32$ black and white and colored cropped images of house number digits. Exact details are provided in \cref{sec:SVHN}. We use this dataset to compare the performance of stochastic gradient methods with deterministic methods, see \cref{fig:SVHN_data_limit,fig:SVHN_BW_noise,fig:SVHN_color_Noise} for examples. We conclude with a case where the method fails to accurately recover a noisy image which illustrates the importance of having representative data in the prior, see \cref{fig:SVHN_cropped}.

\subsection{Extended MNIST}
\label{sec:EMNIST}
For the first host of experiments, we use the Extended MNIST dataset which consists of 800,000 hand drawn 28x28 pixel images including the numbers 0-9, and letters A-Z. We underscore that directly loading this complete dataset into memory requires 4GB of RAM, so that stochastic gradient methods which allow streaming data will be required for larger real-life datasets.  
Importantly, optimizing the original objective with the empirical prior on the entire dataset using gradient-based methods requires evaluating the full gradient $\nabla \phi_{\mu_{n}}(z)$ at any point $z$ and hence requires a full data-pass of $800,000$  inner products. While one could in principle store each $Ax_{i}$ to expedite this process (which is already prohibitive), this does not sidestep the need to compute the sum $\sum_{i=1}^ne^{\langle z, Ax_{i} \rangle }$ at each step of optimization, as is required to evaluate $\nabla \phi_{\mu_{n}}(z)$. %
In the following experiments, the ground truth images are hand drawn by the authors and hence are not present in the original dataset which is used to construct the prior.

 For our first experiment, \cref{fig:CSGD7}, we denoise and deblur an image generated as $b = Ax + \mathcal{N}(0,(0.1 \Vert x \Vert)^2)$  where $A$ is a tridiagonal matrix with constant diagonal entries $0.25,0.5,0.25$ and $\mathcal{N}(0,(0.1 \Vert x \Vert)^2)$ is the mean-zero normal distribution with variance $(0.1 \Vert x \Vert)^2$. For this experiment, we set the following hyperparameters for the projected clipped SGD : $\alpha = 10$, $\beta_{k} = 10^{-4}+ \frac{1}{2}(10^{-2}-10^{-4})(1+\cos(\frac{k+1}{60000}\pi))$, $m_{k} =5$, $\gamma = 100$, $z_{0} = 0$, and a maximum iteration count of $2000$. Moreover in the final display we postprocess the output by setting pixels with intensity above 0.8 to 1 and those below 0.2 to 0.  %

\begin{figure}[h]
    \centering
    \begin{subfigure}[t]{0.22\textwidth}
         \includegraphics[width=\textwidth]{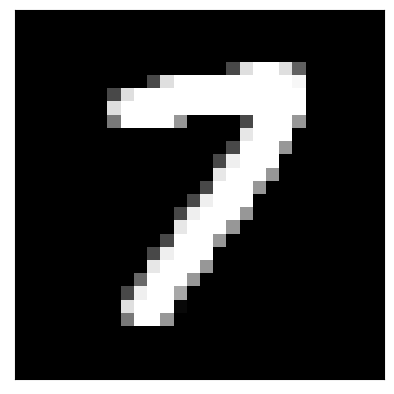}
         \caption{Ground truth}
     \end{subfigure}
    \begin{subfigure}[t]{0.22\textwidth}
         \includegraphics[width=\textwidth]{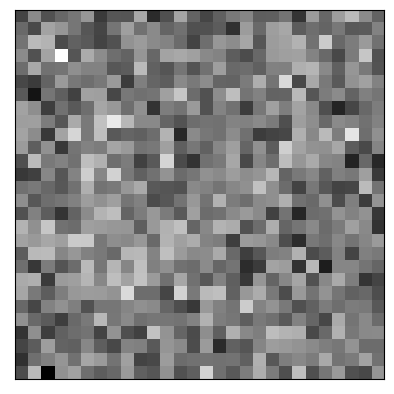}
         \caption{Observed image}
     \end{subfigure}
     \begin{subfigure}[t]{0.22\textwidth}
          \includegraphics[width=\textwidth]{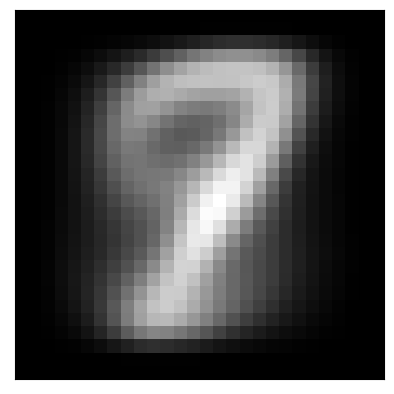}
         \caption{\centering CSGD output \\ (2000 iterations)}
     \end{subfigure}
     \begin{subfigure}[t]{0.22\textwidth}
          \includegraphics[width=\textwidth]{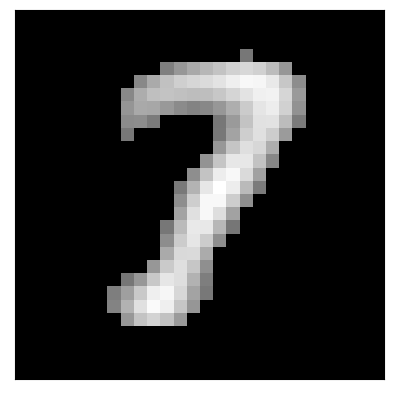}
         \caption{Postprocessed}
     \end{subfigure}
     \caption{Result of solving the MEM problem using Projected Clipped SGD using a mini-batched stochastic gradient. }
     \label{fig:CSGD7}
\end{figure}

For \cref{fig:CSGD5} the same blurring kernel $A$ is used along with the following hyperparameters for the projected clipped SGD : $\alpha = 2$, $\beta_{k} = 10^{-4}+ \frac{1}{2}(10^{-2}-10^{-4})(1+\cos(\frac{k+1}{60000}\pi))$, $m_{k} =5$, $\gamma = 10$, and $z_{0} = 0$ with a maximum iteration count of $50000$.

\begin{figure}[h]
    \centering
    \begin{subfigure}[t]{0.22\textwidth}
         \includegraphics[width=\textwidth]{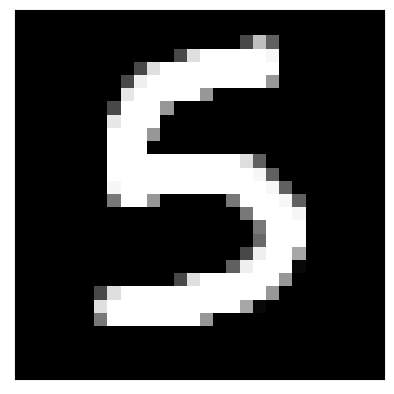}
         \caption{Ground truth}
     \end{subfigure}
    \begin{subfigure}[t]{0.22\textwidth}
         \includegraphics[width=\textwidth]{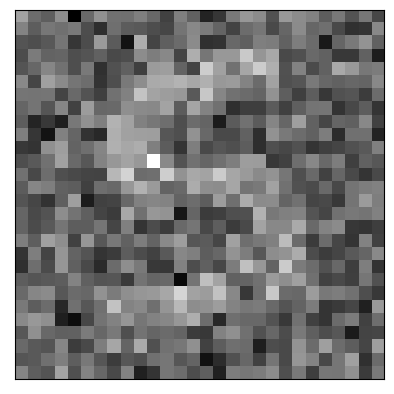}
         \caption{Observed image}
     \end{subfigure}
     \begin{subfigure}[t]{0.22\textwidth}
          \includegraphics[width=\textwidth]{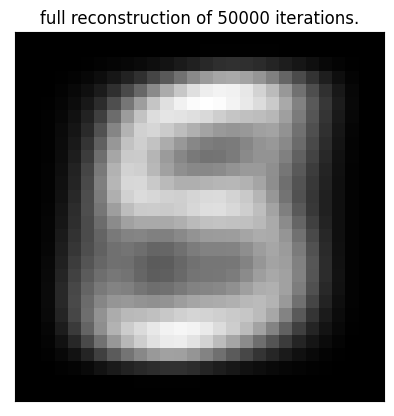}
         \caption{\centering CSGD output \\ (50000 iterations)}
     \end{subfigure}
     \begin{subfigure}[t]{0.22\textwidth}
          \includegraphics[width=\textwidth]{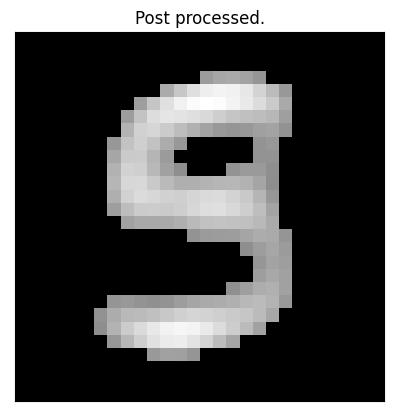}
         \caption{Postprocessed}
     \end{subfigure}
     \caption{Result of solving the MEM problem using Projected Clipped SGD using a mini-batched stochastic gradient.}
     \label{fig:CSGD5}
\end{figure}

In both cases, the postprocessed output is close to the ground truth image. While the hyperparameters used above are tuned for each example, we underscore that, in testing, once the hyperparameters fall within a reasonable range further tuning does not significantly improve the quality of the recovered image except for the batchsize. Indeed, if the batchsize $m_k$ is set to be too small with the same amount of iterations this can result in significantly worse results.

We remark that in \cref{fig:CSGD5}, the postprocessed image could be interpreted as a $5$ or the character g. This visual artifact can be explained by noting that in many handwriting styles these characters differ only by a small stroke and so, in the presence of large noise, it is difficult to discern if that stroke was present or not.

\subsection{Street View House Numbers}
\label{sec:SVHN}

We now consider the more realistic Street View House Numbers (SVHN) dataset \cite{netzer2011reading} which contains 630000 images of house numbers taken directly from Google street view. This dataset contains both original images of the numbers as well as cropped images of each digit of size $32\times 32$ for a total of 1.8 million digits. This dataset presents additional real-world challenges, such as confounding features, color channels, differing aspect ratios, rotations, and so on. See \cref{fig:SVHN_examples} for an example of the original image and the corresponding cropped digit. 
\begin{figure}[h]
    \centering
    \begin{subfigure}[t]{0.22\textwidth}
         \includegraphics[width=\textwidth]{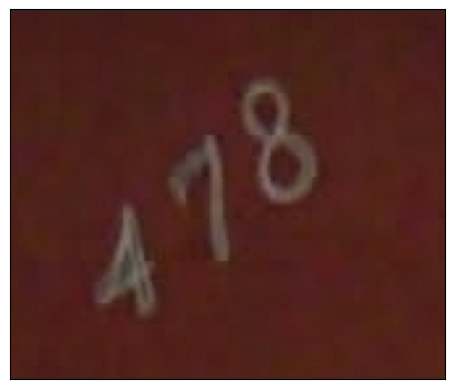}
         \caption{Example data}
     \end{subfigure}
    \begin{subfigure}[t]{0.22\textwidth}
         \includegraphics[width=\textwidth]{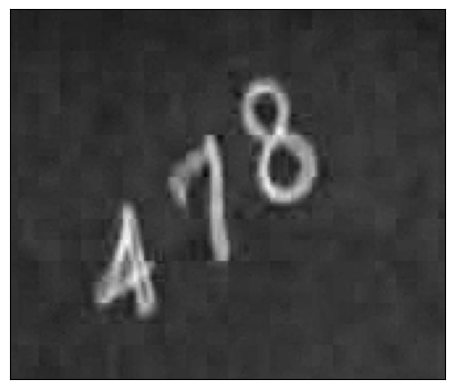}
         \caption{Example data}
     \end{subfigure}
     \begin{subfigure}[t]{0.22\textwidth}
          \includegraphics[width=\textwidth]{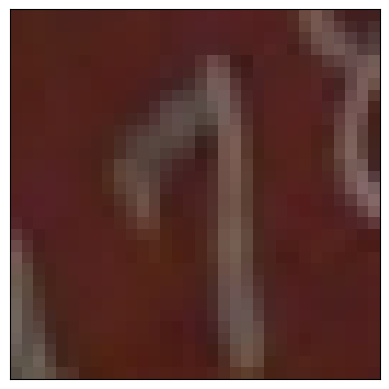}
         \caption{32$\times$32 cropped}
     \end{subfigure}
     \begin{subfigure}[t]{0.22\textwidth}
          \includegraphics[width=\textwidth]{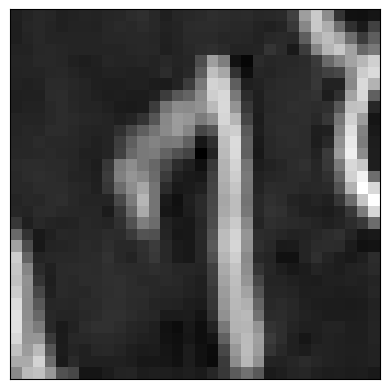}
         \caption{32$\times$32 cropped}
     \end{subfigure}
     \caption{Examples of original house numbers and the corresponding cropped digits from the SVHN dataset.}
     \label{fig:SVHN_examples}
\end{figure}

Throughout we consider only the cropped images so that all images are of the same size (i.e., $32\times 32$). The corresponding problems are thus $d=1024$ dimensional in black and white or $d=3072$ dimensional in color. Furthermore, for each of the following experiments, the ground truth is taken as a point originally found in SVHN, but during recovery is removed from the dataset and hence not used to construct the prior. 

We now demonstrate the advantage of clipped SGD over solving the original formulation of the problem. As before, we generate the observed image as $b = x + \mathcal{N}(0,(0.15\Vert x\Vert)^2)$ which corresponds to adding noise to the image only. To denoise the image we draw  $n=50000$ and $n=100000$ samples and solve the corresponding full empirical MEM problem with the empirical prior via black-box BFGS applied to the original dual objective. It is critical to emphasize that even for $n=500000$, it has become prohibitively expensive to compute the MEM solution for the full empirical problem, as simply storing the partial dataset already requires $4$GB of ram,  with each gradient evaluation requiring a full data-pass, a problem that will only become worse as the ambient dimension increases. Furthermore, we solve the reformulated problem using the projected clipped method with the parameters $\alpha = 1$, $\beta_{k} = 10^{-4}+ \frac{1}{2}(10^{-2}-10^{-4})(1+\cos(\frac{k+1}{60000}\pi))$, $m_{k} =10$, $\gamma = 10$, and $z_{0} = 0$ with a maximum iteration count of $500000$.
We also incorporate a momentum term which is seen to speed up convergence. Concretely, the update in \cref{alg:clippedSGD} for $g_k$ is replaced by%
\begin{align*}
    g_k &\gets 0.1 g_{k-1} + \frac 1 m_k\sum_{i=1}^{m_k} \ell'(x_{k_i},z_k) \text{ for a measurable selection }  \ell'( x_{k_{i}},z_{k}) \in \partial_z \ell( x_{k_{i}},z_{k}) 
\end{align*}
with the convention that $g_{-1}=0$. This modification preserves some information about the direction of previous  gradients before the clipping is performed. The results are compiled in \cref{fig:SVHN_data_limit}.

\begin{figure}[h]
    \centering
    \begin{subfigure}[t]{0.22\textwidth}
         \includegraphics[width=\textwidth]{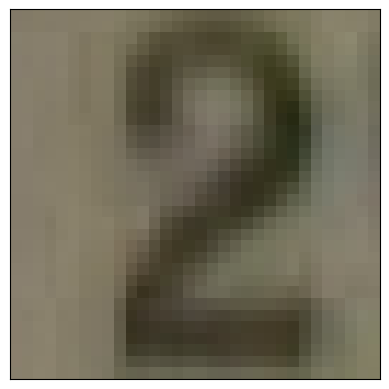}
         \caption{Ground truth}
     \end{subfigure}
    \begin{subfigure}[t]{0.22\textwidth}
         \includegraphics[width=\textwidth]{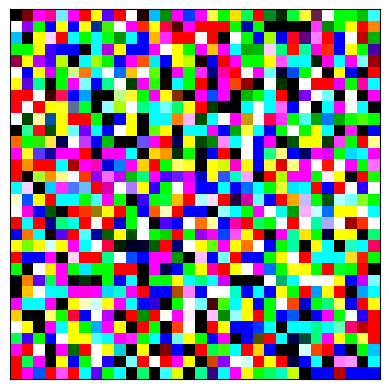}
         \caption{Observed image}
     \end{subfigure} \\
     
     \begin{subfigure}[t]{0.22\textwidth}
          \includegraphics[width=\textwidth]{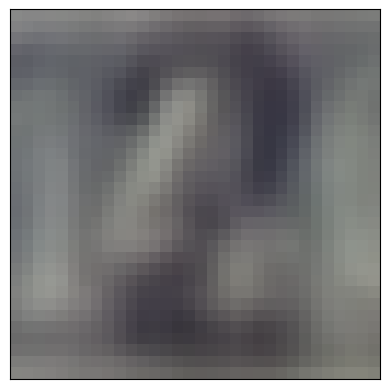}
         \caption{\centering Original formulation, $n=50000$}
     \end{subfigure}
     \begin{subfigure}[t]{0.22\textwidth}
          \includegraphics[width=\textwidth]{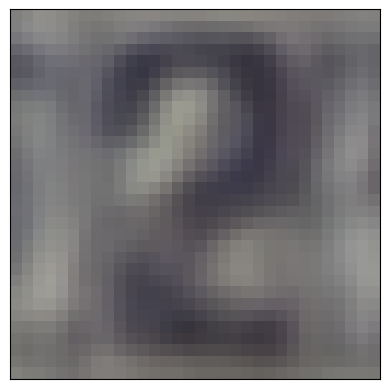}
         \caption{\centering Original formulation, $n=100000$}
     \end{subfigure}
     \raisebox{1cm}{$\xrightarrow[\text{use reformulation}]{\text{Too much data}}$}
     \begin{subfigure}[t]{0.22\textwidth} \includegraphics[width=\textwidth]{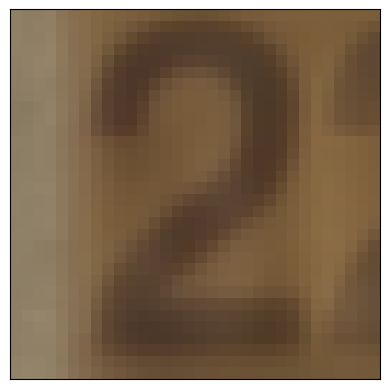}
         \caption{\centering Reformulation \\
         500000 iterations}
     \end{subfigure}
     \caption{Comparison of solving the MEM problem with the prior $\hat \mu_n$ with $n=50000$ and $n=100000$ using full gradient-based methods and using $500000$ iterations of the stochastic gradient method.}
     \label{fig:SVHN_data_limit}
\end{figure}

We underscore that, while the stochastic method is more scalable, a natural downside is that any given iteration may fail to decrease the objective value and it is prone to oscillate in some neighborhood of the minimum.

Next, we illustrate how the solution quality improves as the number of stochastic gradient steps increases. To this end, we plot the recovered solutions after a fixed number of iterations for a black and white image and a colored image. In both cases case, we set $\alpha = 2$, $\beta_{k} = 10^{-4}+ \frac{1}{2}(10^{-2}-10^{-4})(1+\cos(\frac{k+1}{60000}\pi))$, $m_{k} =10$, $\gamma = 10$, and $z_{0} = 0$ with a maximum iteration count of $250000$. The results are compiled in \cref{fig:SVHN_BW_noise,fig:SVHN_color_Noise}.

\begin{figure}[h]
    \centering
    \begin{subfigure}[t]{0.22\textwidth}
         \includegraphics[width=\textwidth]{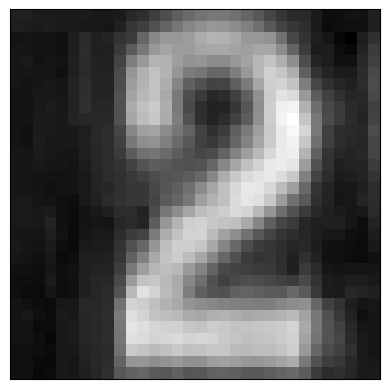}
         \caption{Ground truth}
     \end{subfigure}
    \begin{subfigure}[t]{0.22\textwidth}
        \includegraphics[width=\textwidth]{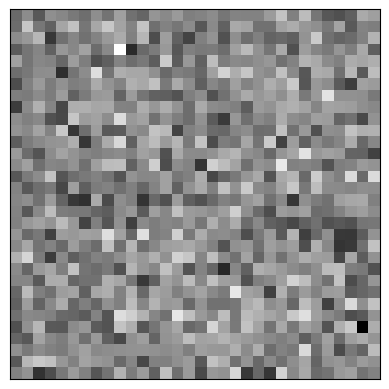}
         \caption{Observed image}
     \end{subfigure}
     \begin{subfigure}[t]{0.22\textwidth}
          \includegraphics[width=\textwidth]{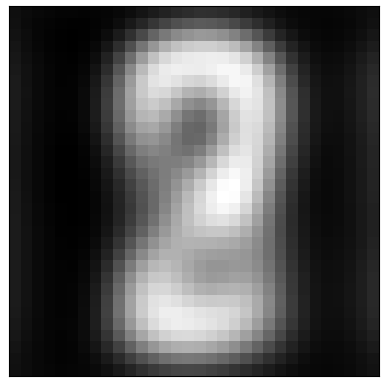}
         \caption{50000 iterations}
     \end{subfigure}
     \begin{subfigure}[t]{0.223\textwidth}
          \includegraphics[width=\textwidth]{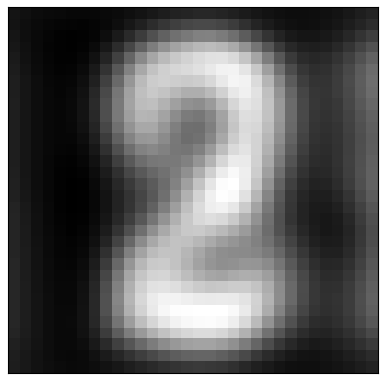}
         \caption{500000 iterations}
     \end{subfigure}
     \caption{Result of applying the clipped SGD algorithm after 50000 and 500000 iterations. The observed image is generated via $b=x+\mathcal N(0,(0.2 \Vert x \Vert)^2)$. After 50000  iterations the digit $2$ is visible albeit with some blurring around the edges. After 500000 iterations the digit is clearer. }
     \label{fig:SVHN_BW_noise}
\end{figure}

\begin{figure}[h]
    \centering
    \begin{subfigure}[t]{0.19\textwidth}
         \includegraphics[width=\textwidth]{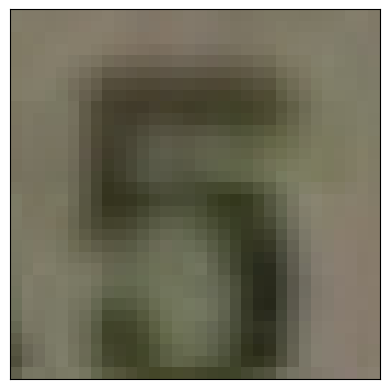}
         \caption{Ground truth}
     \end{subfigure}
    \begin{subfigure}[t]{0.19\textwidth}
        \includegraphics[width=\textwidth]{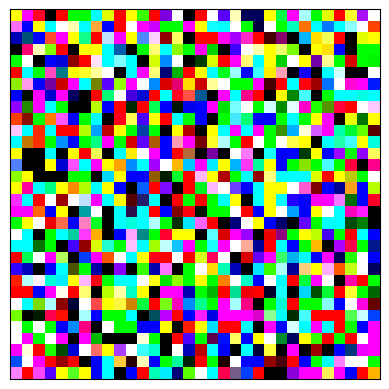}
         \caption{\centering Observed \\image}
     \end{subfigure}
    \begin{subfigure}[t]{0.19\textwidth}
        \includegraphics[width=\textwidth]{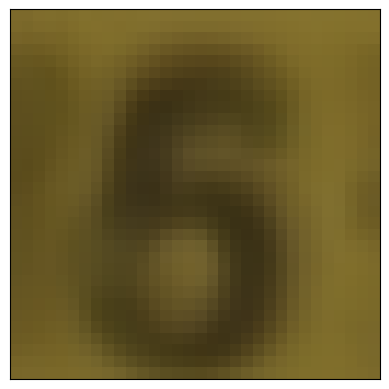}
         \caption{\centering 1000 \\ iterations}
     \end{subfigure}
     \begin{subfigure}[t]{0.19\textwidth}
          \includegraphics[width=\textwidth]{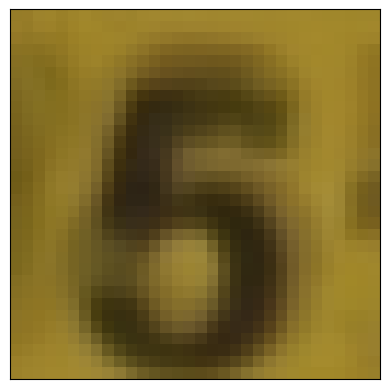}
         \caption{\centering 100000 \\iterations}
     \end{subfigure}
     \begin{subfigure}[t]{0.19\textwidth}
          \includegraphics[width=\textwidth]{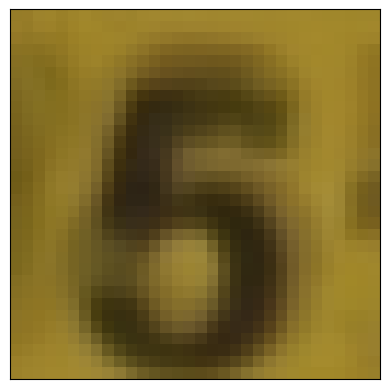}
         \caption{\centering 250000 \\ iterations}
     \end{subfigure}
     \caption{Result of applying the clipped SGD algorithm after 1000, 100000, and 2500000 iterations. The observed image is generated via $b=x+\mathcal N(0,(0.3 \Vert x \Vert )^2)$. We note that the output at $100000$ iterations is very similar to that at $250000$ indicating that the convergence occurs relatively fast.}
     \label{fig:SVHN_color_Noise}
\end{figure}

We note that the solutions obtained in \cref{fig:SVHN_color_Noise} have a discolored background; this can be understood by noting that the solution is  obtained by taking a weighted combination of the datapoints so that the different background colors get averaged together. While color-valued postprocessing could potentially aid in the visual recovery, this introduces another layer of complexity beyond the scope of the MEM problem.

Finally, we note that this approach depends strongly on the dataset used to form the prior. Indeed, as illustrated in  \cref{fig:SVHN_cropped}, if the ground truth is not well-represented by elements of the dataset our method cannot accurately approximate it. In that figure, the ground truth is blurry and includes two digits due to poor cropping. Given that such images are outliers within the SVHN dataset it is unsurprising that the corresponding recovered image does not match the ground truth well. Here, this image is constructed with the hyperparameters $\alpha = 2$, $\beta_{k} = 10^{-4}+ \frac{1}{2}(10^{-2}-10^{-4})(1+\cos(\frac{k+1}{60000}\pi))$, $m_{k} =10$, $\gamma = 10$, and $z_{0} = 0$.

\begin{figure}[h] 
\centering
    \begin{subfigure}[t]{0.22\textwidth}
         \includegraphics[width=\textwidth]{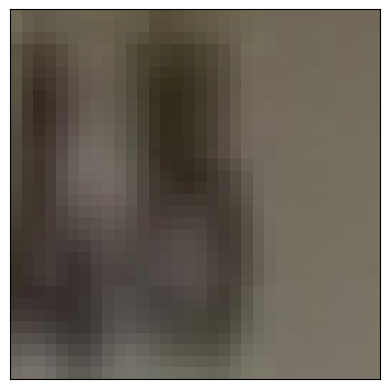}
         \caption{Ground truth}
     \end{subfigure}
    \begin{subfigure}[t]{0.22\textwidth}
         \includegraphics[width=\textwidth]{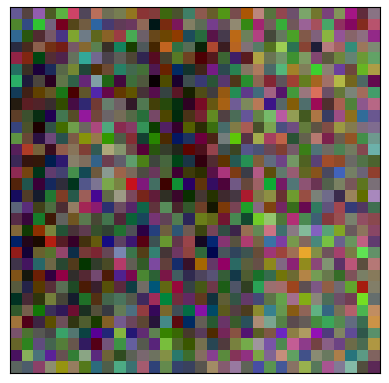}
         \caption{Observed image}
     \end{subfigure}
     \begin{subfigure}[t]{0.22\textwidth}
          \includegraphics[width=\textwidth]{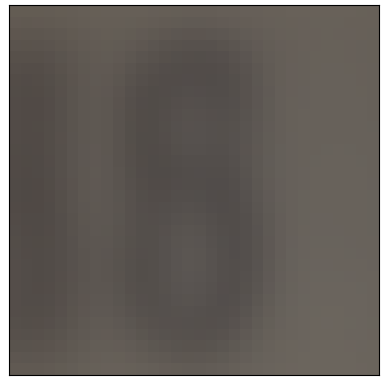}
         \caption{Recovered image}
     \end{subfigure}
     \caption{Results of attempting to denoise an outlier within the SVHN dataset.}
     \label{fig:SVHN_cropped}
\end{figure}

\section{Conclusion}

This work has advanced the statistical and computational study of the MEM problem. Namely, a parametric rate of convergence for empirical dual and primal solutions in expectation was derived which improves upon the $O(n^{-1/4})$ rate obtained in \cite{mkr2024maximumentropy}. This statistical result was obtained by characterizing the stability properties of the primal and dual MEM solutions under perturbations of the prior distribution and follows from standard results in convex analysis and statistics. Furthermore, a connection between the MEM problem and conventional expected risk minimization was established. This connection was leveraged to propose and analyze new stochastic gradient methods for solving MEM problem which were shown to yield good results even in the presence of large amounts of noise.

Many promising research directions stem from this line of work. For instance, it is of interest to generalize the sample complexity results derived herein to the case of priors with unbounded support and to establish if these rates are minimax optimal. Establishing further statistical properties of these estimators such as  central limit theorem or finite sample Gaussian approximation results \`a la Stein's method would also be useful for understanding the quality of approximations by empirical measures in the MEM framework and for calibrating statistical and optimization error.  

The aforementioned connection to expected risk minimization also opens the door to a more engineering-based study of alternative stochastic optimization methods for the MEM problem with the aim of improving its empirical performance.

\newpage

\appendix
\section{Alternate Derivation of Parametric Rates}
\label{app:MestimationRates}

In this section, we establish how the expected rate of convergence of the dual MEM solutions (established in \cref{thm:parametricRatesDual}) can be derived using standard techniques in the $M$-estimation literature. Namely, we will apply Corollary 1 in \cite{birge1993rates}. To leverage this result, we make two additional assumptions on top of the already global assumptions (\ref{eq:ass}):
\begin{tcolorbox}
\vspace{-1em}
\begin{equation}
    \textrm{$g$ is level bounded and $g^{*}$ is Lipschitz continuous on the closed ball $\alpha^{-1}B_r$.} \tag{A3} \label{eq:ass_appendix}
\end{equation}
\end{tcolorbox} 
Here $r$ is defined as 
\begin{equation*}
    r = \frac{\alpha}{\beta}\left[ \min_{v_{g} \in -\partial g^{*}(0)} \Vert v_{g} \Vert +  \Vert b \Vert  + \Vert A \Vert \vert \spt (\mu) \vert \right],
\end{equation*}
which is well defined by the level-boundedness of $g$, and has a necessary property, $z_{\mu} \in B_{r}$ by \cref{thm:stabilityDual}, that we will need later. This continuity assumption was notably not required in the derivation of the parametric rate via elementary arguments, see \cref{thm:parametricRatesDual}.

\begin{lemma}
\label{lem:lipschitzConstantEmpirical}
    For any $x\in\mathcal X$,  $h_x:z\in B_r\mapsto \exp\left(\alpha g^{\ast}(-z/\alpha)-\langle b,z\rangle +\langle A^{\top}z,x\rangle\right)$ is Lipschitz continuous with parameter 
    \begin{equation}
    \label{eq:lipschitzH_x}
   \begin{aligned} 
    L_{h_x}&:=\sup_{B_r}h_x \left( L_{g^{\ast}}+ \|b\|+\| A\|\|x\|\right)
    \\
    &\leq \exp\left(\alpha \sup_{B_r}g^{\ast}(-\cdot/\alpha)+\|b\| r +\|{A}\|\|x\|r\right) \left( L_{g^{\ast}}+ \|b\|+\| A\|\|x\|\right),
    \end{aligned}
    \end{equation}
    where $L_{g^{\ast}}$ is the Lipschitz constant of $g^{\ast}$ on $\alpha^{-1}B_r$.
\end{lemma}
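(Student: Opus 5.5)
The plan is to write $h_x=\exp\circ k_x$, with exponent $k_x(z):=\alpha g^{\ast}(-z/\alpha)-\langle b,z\rangle+\langle A^{\top}z,x\rangle$. We first show that $k_x$ is Lipschitz on $B_r$, and then pass the estimate through the exponential using the mean value theorem on the scalar function $\exp$.

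\emph{Lipschitz bound for the exponent.} For $z_1,z_2\in B_r$, the points $-z_1/\alpha$ and $-z_2/\alpha$ lie in $\alpha^{-1}B_r$. By \eqref{eq:ass_appendix},
\[
\alpha|g^{\ast}(-z_1/\alpha)-g^{\ast}(-z_2/\alpha)|\leq \alpha L_{g^{\ast}}\|z_1-z_2\|/\alpha=L_{g^{\ast}}\|z_1-z_2\|.
\]
The linear terms are handled by Cauchy--Schwarz:
\[
|\langle b,z_1-z_2\rangle|\leq\|b\|\|z_1-z_2\|,\qquad |\langle A^{\top}(z_1-z_2),x\rangle|\leq \|A\|\|x\|\|z_1-z_2\|.
\]
Hence $k_x$ is Lipschitz on $B_r$ with constant $L_{g^{\ast}}+\|b\|+\|A\|\|x\|$. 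In particular $k_x$ is finite on $B_r$, since $g^{\ast}$ is finite on $\alpha^{-1}B_r$.

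\emph{Passing through the exponential.} For real numbers $s,t$ we have
\[
|e^s-e^t|\leq \max\{e^s,e^t\}\,|s-t|,
\]
by the mean value theorem and monotonicity of $\exp$. Apply this with $s=k_x(z_1)$ and $t=k_x(z_2)$. Then $\max\{e^s,e^t\}\leq \sup_{B_r}h_x$, so
\[
|h_x(z_1)-h_x(z_2)|\leq \sup_{B_r}h_x\,\bigl(L_{g^{\ast}}+\|b\|+\|A\|\|x\|\bigr)\|z_1-z_2\|,
\]
which is the first line of \eqref{eq:lipschitzH_x}.

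\emph{Bounding the supremum.} For the second inequality, bound $\sup_{B_r}h_x=\exp(\sup_{B_r}k_x)$ by the sum of termwise suprema:
\[
\sup_{B_r} k_x\leq \alpha\sup_{B_r}g^{\ast}(-\cdot/\alpha)+\|b\|r+\|A\|\|x\|r,
\]
using $\|z\|\leq r$ and Cauchy--Schwarz again. Finiteness of $\sup_{B_r}g^{\ast}(-\cdot/\alpha)$ follows from the Lipschitz continuity in \eqref{eq:ass_appendix}: $g^{\ast}$ is finite, hence continuous, on the compact set $\alpha^{-1}B_r$.

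The only subtle point is making sure everything stays finite on $B_r$, i.e., that $B_r/\alpha$ lies in $\dom g^{\ast}$. That is exactly what the Lipschitz hypothesis on the closed ball provides, so no step should be a real obstacle.
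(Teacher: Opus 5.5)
Your proof is correct and follows essentially the same route as the paper: you bound the exponent's Lipschitz constant termwise (the factor $\alpha$ cancels against the $1/\alpha$ scaling in the $g^{\ast}$ term) and then pass through $\exp$, whose local Lipschitz constant on $B_r$ is at most $\sup_{B_r}h_x$. You also spell out the termwise bound on the supremum of the exponent that gives the second line of \eqref{eq:lipschitzH_x}, which the paper leaves implicit.
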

\begin{proof}
    As the function $t\in(-\infty,a)\mapsto e^t$ is $e^a$-Lipschitz continuous for any $a\in\mathbb R$,
    \[
    \begin{aligned}
    &\left|\exp\left(\alpha g^{\ast}(-z/\alpha)-\langle b,z\rangle +\langle A^{\top}z,x\rangle\right)-\exp\left(\alpha g^{\ast}(-z'/\alpha)-\langle b,z'\rangle +\langle A^{\top}z',x\rangle\right)\right|
    \\
    &\leq\sup_{B_r} \exp\left(\alpha g^{\ast}(-\cdot/\alpha)-\langle b,\cdot\rangle +\langle A^{\top}\cdot,x\rangle\right) \times\\
    &\hspace{12em}\left|\alpha \left(g^{\ast}(-z/\alpha)-g^{\ast}(-z'/\alpha)\right)-\langle b,z-z'\rangle +\langle A^{\top}(z-z'),x\rangle\right|
    \\
    &\leq \sup_{B_r} \exp\left(\alpha g^{\ast}(-\cdot/\alpha)-\langle b,\cdot\rangle +\langle A^{\top}\cdot,x\rangle\right) \left( L_{g^{\ast}}\| z-z'\|+ \|b\|\|z-z'\| +\| A\|\|x\|\|z-z'\|\right),
    \end{aligned} 
    \]
    proving the claimed result.
\end{proof}

With these lemmas in hand, we will apply Corollary 1 in \cite{birge1993rates}.  We underscore that the cited reference deals with a more general setting, wherein a random variable $Z_i = f(X_i,W_i,s)$ taking values in a measurable space $\mathscr Z$ is observed. Here,     $X_i,W_i$ are independent random variables taking values in the measurable spaces $\mathscr X$ and $\mathscr W$ respectively, $s\in\mathscr S$ is a parameter, and $f:\mathscr X\times \mathscr W\times \mathscr S$ is a measurable function. In this setting, $(X_i,W_i,Z_i)$ is distributed according to a population distribution $P_s^i$ which depends on the choice of $s$. The aim is to perform inference on the (unknown) parameter $s\in\mathscr{S}$ based on the observations $(Z_i)_{i=1}^n$ under the assumption  that there is a known function $\gamma:\mathscr{Z}\times\mathscr S\to \mathbb R$ for which 
\begin{equation}
\label{eq:contrastFunction}
    \inf_{t\in\mathscr{S}}\mathbb E_{\bar P_s} \left[\gamma(Z,t)\right] = \mathbb E_{\bar P_s}\left[\gamma(Z,s)\right] \text{ for every }s\in\mathscr S,
\end{equation}
where $\bar P_s:=\frac{1}{n}\sum_{i=1}^nP_s^i$. $\gamma$ is called a \emph{contrast function} and the \emph{minimum contrast estimator} is any minimizer of the function $t\in \mathscr S\mapsto \frac 1n \sum_{i=1}^n\gamma(Z_i,t)$, recalling that $(Z_i)_{i=1}^n$ is the observed sample. 

However, the setting considered in the present work  assumes access to i.i.d. observations from a fixed prior $\mu$ and we wish to perform inference on the unique minimizer, $z_{\mu},$ of 
\[
z\in B_r\mapsto \int \exp\left(\alpha g^{\ast}(-z/\alpha)-\langle b,z\rangle +\langle A^{\top}z,x\rangle\right)d\mu(x)=\mathbb E_{\mu}\left[h_X(z)\right].
\]
This corresponds to minimum contrast estimation with $\mathscr{S}=B_r$, where the true parameter is given by $s=z_{\mu}$,  $Z_i = f(X_i,W_i,s)= X_i$, $\gamma(Z,t) = \ell(X,t)= h_X(t)$, and $P_s^i$ can be identified with $\mu$ for every $i=1,2,\dots,n$ so that $\bar P_s=\mu$. An important distinction is that \eqref{eq:contrastFunction} holds for $s=z_{\mu}$, but not for any other choice of $t\in\mathscr{S}$ (indeed, $\bar P_t$ is not even defined for $t\neq s$). This difference is innocuous however, as in \cite{birge1993rates} the true parameter $s$ is assumed to be fixed and the population-level distribution $\bar P_s$ is considered only for the true parameter in the subsequent arguments.

As the conditions of Corollary 1 in \cite{birge1993rates} are relatively technical, we summarize them here in the interest of keeping the presentation self-contained. The following correspond to conditions (A1)-(A'4) in their work. 
\begin{enumerate}
    \item[(A1)] There exist positive functions $M:\mathscr W\to \mathbb R$ and $\Delta:\mathscr X\times \mathscr S^2\to \R$ and $B>0$ for which 
    \[
        \left|\gamma(z,t)-\gamma(z,t')\right|\leq M(w)\Delta(x,t,t')\leq BM(w), \bar P_s\text{-a.s.}
    \]
    where $z=f(x,w,s)$ and the inequality holds for every $t,t'\in\mathscr{S}$. 
    \item[(A2)] There exists positive constants $a,\Gamma$ such that $\int e^{a M(w)}dP_s^i\leq \Gamma$ for every $1\leq i\leq n$. 
    \item[(A3)] For any $\delta>0$ there exists a covering $\mathscr S_{\delta}$ of $\mathscr S$ of finite cardinality for which 
    \[
        \mathbb E_{\bar P_s}\left[\left(\sup_{t,t'\in  S_{\delta}}\Delta^2(X,t,t')\right)\right]\leq \delta^2, \text{ for every }S_{\delta}\in\mathscr S_{\delta}.
    \]
    Moreover, we set $H(\delta,\mathscr{S})=\log(\mathrm{card}(\mathscr S_{\delta}^{\star}))$ where  $\mathscr S^{\star}_{\delta}$ is the cover with minimal cardinality which satisfies the above properties. 
    \item[(A'4)] There exists a constant $C\geq Ba/\Gamma$ for which 
    \begin{equation}
    \label{eq:pseudoDistBound}
        \mathbb E_{\bar P_s}\left[\Delta^2(X,s,t)\right]\leq C\mathbb E_{\bar P_s}\left[\gamma(Z,t)-\gamma(Z,s)\right].
    \end{equation}
\end{enumerate}

\begin{proposition}
    Assumptions (A1)-(A'4) hold for $\gamma(z,t)=\ell(X,t)=h_{X}(t)$ with the following choices
    \begin{enumerate}
        \item $M(w)=\sup_{x\in\spt(\mu)}L_{h_x}=:M$, $\Delta(x,t,t')=\|t-t'\|$, and $B=2r$ in (A1),
        \item $a=2/M$ and $\Gamma =e^2$ in (A2),
        \item $\mathscr{S}_{\delta}$ to be a cover of $\mathscr S=B_r$ by closed balls of radius $\delta/2$ in (A3). Then,  \begin{equation}
    \label{eq:entropyBall}
        H(\delta,B_r)\leq F(\delta,r):=\begin{cases}
             d\log\left( \frac{4r}{\delta}+1\right),&\text{if }\delta\leq 2r,
             \\
             0,&\text{otherwise.}
        \end{cases}
    \end{equation}
        \item 
        $C=\max\left\{ \frac{2\alpha}{\beta} \sup_{x\in\spt(\mu)}(\inf_{\mathbb R^d}h_x)^{-1},4r/(Me^2)\right\}$
         in (A'4).
    \end{enumerate}
\end{proposition}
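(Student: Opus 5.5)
We verify the four conditions in turn; each reduces to a result already established in the paper.

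For (A1), take $\Delta(x,t,t')=\|t-t'\|$ and $M(w)\equiv M:=\sup_{x\in\spt(\mu)}L_{h_x}$. By \cref{lem:lipschitzConstantEmpirical}, for every $x\in\spt(\mu)$ and $t,t'\in B_r$,
\[
|h_x(t)-h_x(t')|\leq L_{h_x}\|t-t'\|\leq M\|t-t'\|.
\]
We need $M<\infty$. The bound \eqref{eq:lipschitzH_x} is continuous and increasing in $\|x\|$, and $\|x\|\leq|\spt(\mu)|_\infty$ on the compact support. Moreover $\sup_{B_r}g^*(-\cdot/\alpha)$ is finite because $g^*$ is Lipschitz, hence continuous, on the compact ball $\alpha^{-1}B_r$ by (A3). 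Since $t,t'\in B_r$, we have $\|t-t'\|\leq 2r$, which gives $B=2r$. The inequality only has to hold $\mu$-a.s., which is why it suffices to take the supremum over $\spt(\mu)$.

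For (A2), $M$ is a deterministic constant. With $a=2/M$ we get $\int e^{aM}dP^i_s=e^2=\Gamma$.

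For (A3), cover $B_r$ by closed balls $S_\delta$ of radius $\delta/2$. Then $\sup_{t,t'\in S_\delta}\|t-t'\|^2\leq\delta^2$ deterministically, so the expectation condition holds trivially. If $\delta> 2r$, the single set $B_r$ already has diameter at most $2r<\delta$. This gives cardinality one and $H=0$. If $\delta\leq 2r$, the standard volumetric argument applies: take a maximal $\delta/2$-separated subset of $B_r$. The balls of radius $\delta/4$ around its points are disjoint and contained in $B_{r+\delta/4}$, so the number of points is at most $\big((r+\delta/4)/(\delta/4)\big)^d=(4r/\delta+1)^d$. Taking logarithms yields \eqref{eq:entropyBall}.

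The main step is (A'4). Here we must show $\|z_\mu-t\|^2\leq C\,(\mathbb E_\mu[h_X(t)]-\mathbb E_\mu[h_X(z_\mu)])$ for all $t\in B_r$. This is exactly the quadratic growth property (P2) of \cref{lem:clipped_Sgd_properties}, but we make its constant explicit. Each $h_x$ is $\frac{\beta}{\alpha}$-strongly convex, so by \cref{lem:strongConvexityExponential} its exponential is $\frac{\beta}{\alpha}e^{\inf h_x}$-strongly convex. Averaging over $\mu$, the objective $\mathbb E_\mu[h_X]$ is strongly convex with modulus at least $\frac{\beta}{\alpha}\inf_{x\in\spt(\mu)}e^{\inf h_x}$. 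This infimum is positive by the uniform parabola lower bound in the proof of (P2). Since $z_\mu\in B_r$ minimizes the objective (\cref{thm:stabilityDual}), \cref{prop:strongConvexity} (ii) gives the quadratic growth with constant $\frac{2\alpha}{\beta}\sup_{x}(\cdot)^{-1}$. This is the first entry of the stated maximum, read with the exponential of $\inf h_x$, which is how the paper uses $\inf h_x$ for the composite $h_x$.

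Finally, the side requirement $C\geq Ba/\Gamma=2r\cdot(2/M)/e^2=4r/(Me^2)$ is ensured by including $4r/(Me^2)$ in the maximum. The only real obstacle is keeping the strong-convexity modulus uniform in $x$, and the uniform lower bound on $\inf h_x$ from (P2) already handles it.
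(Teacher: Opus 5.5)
Your proposal is correct and follows the paper's proof essentially step by step: the Lipschitz lemma gives (A1) with $B=2r$, the choice $a=2/M$, $\Gamma=e^2$ gives (A2), a cover by balls of radius $\delta/2$ gives (A3), and the quadratic-growth argument from (P2) together with $Ba/\Gamma=4r/(Me^2)$ gives (A'4). The differences are cosmetic: you prove the covering count by a volumetric argument instead of citing Massart's Lemma 4.14, and you explicitly check that $M<\infty$, which the paper leaves implicit.
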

\begin{proof}
    For item (1), we may apply \cref{lem:lipschitzConstantEmpirical} to obtain that 
    \[
        |h_x(z)-h_x(z')|\leq L_{h_x}\|z-z'\|\leq \sup_{x\in\spt(\mu)}L_{h_x}\|z-z'\|\leq 2\sup_{x\in\spt(\mu)}L_{h_x}r
    \]
    for every $z,z'\in B_r$ and $\mu$-a.e. $x\in\mathcal X$. That is, $M(w)=\sup_{x\in\spt(\mu)}L_{h_x}=:M$, $\Delta(x,t,t')=\|t-t'\|$, and $B=2r$ in (A1).

    For item (2), we follow point (2) of the remark on p.118 of \cite{birge1993rates} and take $a=2/M$ and $\Gamma = e^{aM}=e^2$ so that $\sqrt{\Gamma}/a=Me/2$ in (A2).

    For item (3) we have that $\Delta(x,t,t')=\|t-t'\|$ as noted above. Consequently, any cover $\mathscr S_\delta$ of $B_r$ by closed balls of radius $\delta/2$ satisfies 
    \[
        \mathbb E_{\mu}\left[\sup_{t,t'\in B_{\delta}}\|t-t'\|^2\right]\leq \delta^2
    \] 
    and 
    is hence admissible for (A3). Applying Lemma 4.14 of \cite{massart2007concentration}, the minimal such cover consists of at most 
    \[
       \begin{cases}
             \left( \frac{4r}{\delta}+1\right)^d,&\text{if }\delta\leq 2r,
             \\
             1,&\text{otherwise,}
        \end{cases}
    \]
    sets.

    Item (4) follows directly from the proof of (P2) in \cref{lem:clipped_Sgd_properties}, noting that we must set $C=\max\left\{ \frac{2\alpha}{\beta} \sup_{x\in\spt(\mu)}(\inf_{\mathbb R^d}h_x)^{-1},4r/(Me^2)\right\}$ recalling that $\frac{Ba}{\Gamma}=4r/(Me^2)$ from the definitions of $a,B,\Gamma,$ and $M$ from points (A1) and (A2).
\end{proof}

The desired convergence rate result, Corollary 1 in \cite{birge1993rates}, depends implicitly on Theorem 2 from the same reference which we state presently. To state this result we define $H(u,B_{\sigma}(z_\mu)) = \log(\mathrm{card}(B_{\sigma}(z_\mu)_{u}^{\star}))$, where $B_{\sigma}(z_\mu)_{u}^{\star}$ is the minimal covering which satisfies 
\begin{equation*}
     \mathbb E_{\mu} \left[\sup_{t,t'\in B_{u}}\|t-t'\|^2\right]\leq u^2, \text{ for each } B_{u} \in B_{\sigma}(z_\mu)_{u}^{\star},
\end{equation*}
analogously to the definition of $H(u,\mathscr{S})$. Moreover, we define the outer probability of an arbitrary subset $B$ of the underlying sample set $\Omega$ as 
\[
    \mathbb P^{\ast}(B) = \inf\left\{\mathbb P(A):B\subset A, A \text{ is Borel measurable}\right\}.
\]

\begin{lemma}[Theorem 2 in \cite{birge1993rates}]
\label{lem:probBound}
Under assumptions (A1)-(A'4),  let $B_{\sigma}(z_{\mu}) = \left\{t\in B_r:\|t-z_{\mu}\|\leq \sigma\right\}$ and let $\tilde H:(0,\infty)\times (0,1]\to [1,\infty)$ be such that 
   \[
        H(u,B_{\sigma}(z_{\mu}))\leq \tilde{H}(u,\sigma) \text{ for any }(u,\sigma)\in (0,\infty)\times (0,1]
   \]
   and such that the function $\varphi:\sigma\in(0,1/2]\mapsto\int_{\frac{\sigma^2}{128\Gamma C}}^{2\sigma}\sqrt{\tilde H(u,2\sigma)}du$ satisfies the property that $\sigma\in(0,1/2]\mapsto \varphi(\sigma)/\sigma$ is non-increasing and continuous. %
   Then, if $\sigma^{*}>0$ is the unique solution of the equation 
   \begin{equation}
   \label{eq:equationSigma}
       \frac{3840C\sqrt{\Gamma}}{a}\varphi(\sigma)=\sqrt n \sigma^2, 
   \end{equation} 
   for any non-negative integer $L$ satisfying $2^{2L}(\sigma^*)^2\leq \frac{BC\Gamma}{a}$, it holds that  
    \[
        \mathbb P^{*}\left(\sup_{t\in \mathscr S}\frac{\left|\left(\frac 1n \sum_{i=1}^n\gamma(X_i,t)- \frac 1n \sum_{i=1}^n\gamma(X_i,s) -\mathbb E_{\bar P_s}\left[ \gamma(Z,t)-\gamma(Z,s)\right]\right)\right|}{\max\{\|s-t\|^2,2^{2L}(\sigma^*)^2\}}\geq \frac 1{2C}\right)
    \]
    is at most $8.1\exp\left(-2^{2L+1}n(\sigma^*)^2/b\right)$ for $b=\frac{9(3840)^2C^2\Gamma}{a^2}\leq n(\sigma^*)^2$.
\end{lemma}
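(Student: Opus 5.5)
Since this is Theorem~2 of \cite{birge1993rates} specialized to our setting, the plan is to check that our instance fits their framework, then reproduce the peeling-plus-chaining argument behind their theorem. The identification was set out before the statement: $\mathscr S=B_r$, $s=z_\mu$, $Z_i=X_i$, $\gamma(Z,t)=h_X(t)$ and $\bar P_s=\mu$. By the preceding proposition, (A1)--(A'4) hold with constants $M,\Delta,B,a,\Gamma,C$, where $\Delta(x,t,t')=\|t-t'\|$ does not depend on $x$. The only point needing care is the remark already made in the text: \eqref{eq:contrastFunction} is used only at the true parameter $s=z_\mu$. There it holds because $z_\mu$ minimizes $\mathbb E_\mu[h_X(\cdot)]$ over $\mathbb R^d$, and $z_\mu\in B_r$ by \cref{thm:stabilityDual}.

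For the argument itself, write $\nu_n(t)=\frac1n\sum_{i=1}^n\big(\gamma(X_i,t)-\gamma(X_i,s)\big)-\mathbb E_\mu[\gamma(Z,t)-\gamma(Z,s)]$ for the centered empirical process.
\begin{enumerate}
\item Peel $\mathscr S$ into the ball $\{\|t-s\|\le 2^L\sigma^*\}$ and the shells $S_j=\{2^{j-1}\sigma^*<\|t-s\|\le 2^j\sigma^*\}$ for $j>L$. On $S_j$ the denominator is at least $2^{2j-2}(\sigma^*)^2$. The event in question is therefore contained in the union over $j$ of the events $\sup_{t\in B_{2^j\sigma^*}(s)}|\nu_n(t)|\gtrsim 2^{2j}(\sigma^*)^2/C$. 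There are only finitely many nonempty shells, since $\|t-s\|\le 2r$.
\item On each ball $B_\sigma(s)$, bound $\sup|\nu_n|$ by a chaining argument. Use the covers from (A3) at dyadic scales $u$ between $\sigma^2/(128\Gamma C)$ and $2\sigma$, with cardinality controlled by $\tilde H(u,2\sigma)$.
\item Control each chain link by Bernstein's inequality. Its increments are bounded by $M\Delta$ through (A1). Their exponential moments are controlled through (A2), which supplies the constants $a$ and $\Gamma$. Their variances are controlled by (A3) at the finest scale. Below the smallest scale, the remainder is bounded crudely by $u\sqrt\Gamma/a$-type terms.
\item Summing the links produces the entropy integral $\varphi(\sigma)$ and a deviation bound of the form $\mathbb P^*\big(\sup_{B_\sigma(s)}|\nu_n|\ge c\,\varphi(\sigma)/\sqrt n+x\big)\le e^{-\cdots}$.
\item The monotonicity of $\varphi(\sigma)/\sigma$, together with the defining equation \eqref{eq:equationSigma} for $\sigma^*$, gives $\varphi(2^j\sigma^*)/\sqrt n\le 2^{j}\sigma^*\varphi(\sigma^*)/(\sqrt n\,\sigma^*)\lesssim 2^{2j}(\sigma^*)^2/C$. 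So the expectation part on each shell is dominated by the threshold in step 1.
\end{enumerate}

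The condition (A'4) enters by relating the variance proxy $\mathbb E\Delta^2(X,s,t)$ to the excess risk. This is what allows the threshold $\frac1{2C}\max\{\|s-t\|^2,\cdot\}$ to be expressed in terms of $\|s-t\|^2$ alone. The constraint $2^{2L}(\sigma^*)^2\le BC\Gamma/a$ keeps the Bernstein bound in its sub-Gaussian regime. Summing the resulting tail bounds $\exp(-c\,2^{2j}n(\sigma^*)^2/b)$ over $j\ge L$ gives a geometric series, which yields the factor $8.1\exp(-2^{2L+1}n(\sigma^*)^2/b)$. The requirement $b\le n(\sigma^*)^2$ ensures that the series' ratio is small.

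The main obstacle is the bookkeeping of constants in the chaining step: matching the lower limit $\sigma^2/(128\Gamma C)$, the factor $3840C\sqrt\Gamma/a$, and the final $8.1$. I would not re-derive these numerically. Instead I would invoke \cite[Theorem~2]{birge1993rates} directly once the hypotheses are verified, and treat the sketch above as the justification that the hypotheses are used exactly as the identification above permits.
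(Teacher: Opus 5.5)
Your proposal is correct and matches the paper: the paper gives no independent proof, but imports Theorem~2 of \cite{birge1993rates} verbatim after identifying $\mathscr S=B_r$, $s=z_\mu$, $\gamma(Z,t)=h_X(t)$, $\bar P_s=\mu$ and verifying (A1)--(A'4) in the preceding proposition, which is exactly your plan. Your peeling/chaining sketch is only heuristic background and is not needed for the argument, and some of its remarks are loose. For example, $\Delta(x,t,t')=\|t-t'\|$ does not depend on $x$, so the $\|s-t\|^2$ in the denominator is simply $\mathbb E_\mu[\Delta^2(X,s,t)]$ and does not come from (A'4).
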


To apply this Lemma in the present setting, it will be useful to characterize the solutions of \eqref{eq:equationSigma}. 
    
\begin{proposition}
\label{prop:OptimalSolution}
The function  
\[
\tilde H(\delta,\sigma)=\begin{cases}\frac{4\sigma d}{\delta},&\text{if }\delta\leq 2\sigma,
\\
2d,&\text{otherwise}.
\end{cases}
\]
satisfies the conditions of \cref{lem:probBound}. Moreover, for each $n\in\mathbb N$ sufficiently large, the solution of \eqref{eq:equationSigma} is unique and of the form  
$\sigma^{\star}(n)= K(n)n^{-1/2}\in(0,1/2]$ 
where 
\[
K(n)=\left(\frac{1}{2}\sqrt{\frac{R^2n^{-1/2}}{256\Gamma C}+4 R}-\frac{Rn^{-1/4}}{2\sqrt{256\Gamma C}}\right)^2,\text{ for }R=\frac{30720C\sqrt{\Gamma d}}{a}.
\] 
\end{proposition}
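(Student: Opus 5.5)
The plan is to verify the two requirements of \cref{lem:probBound} for the proposed $\tilde H$: that it dominates the local entropy and takes values in $[1,\infty)$, and that $\varphi(\sigma)/\sigma$ is non-increasing and continuous. After that, \eqref{eq:equationSigma} reduces to a quadratic equation in $\sqrt\sigma$, which can be solved explicitly.

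\emph{Entropy bound.} Since $\Delta(x,t,t')=\|t-t'\|$, any cover of $B_\sigma(z_\mu)$ by closed balls of radius $u/2$ is admissible. The ball $B_\sigma(z_\mu)\cap B_r$ is contained in a Euclidean ball of radius $\sigma$. The same covering bound (Lemma 4.14 of \cite{massart2007concentration}) used for \eqref{eq:entropyBall} therefore gives the following.
\begin{itemize}
\item If $u\le 2\sigma$, then $H(u,B_\sigma(z_\mu))\le d\log(4\sigma/u+1)\le 4\sigma d/u$, using $\log(1+x)\le x$.
\item If $u>2\sigma$, a single set suffices, so $H=0\le 2d$.
\end{itemize}
Moreover, $\tilde H\ge 2d\ge 1$ in both cases, because $4\sigma d/u\ge 2d$ when $u\le 2\sigma$. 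Hence $\tilde H$ maps into $[1,\infty)$ and dominates $H$.

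\emph{Computing $\varphi$.} In $\varphi(\sigma)$ we evaluate $\tilde H(u,2\sigma)$ for $u\le 2\sigma\le 4\sigma$, so only the first branch is used and $\tilde H(u,2\sigma)=8\sigma d/u$. This requires the lower limit to lie below the upper one, i.e.\ $\sigma^2/(128\Gamma C)<2\sigma$, which holds for $\sigma\le 1/2$ provided $256\Gamma C>1/2$. Since $\Gamma=e^2$, this is automatic once $C$ is not tiny; otherwise $C$ may be enlarged, since (A'4) is preserved under increasing $C$. Direct integration gives
\[
\varphi(\sigma)=2\sqrt{8\sigma d}\Bigl(\sqrt{2\sigma}-\tfrac{\sigma}{\sqrt{128\Gamma C}}\Bigr)=8\sqrt d\,\sigma\Bigl(1-\tfrac{\sqrt\sigma}{\sqrt{256\Gamma C}}\Bigr).
\]
Consequently $\varphi(\sigma)/\sigma=8\sqrt d\,(1-\sqrt{\sigma/(256\Gamma C)})$, which is continuous and non-increasing.

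\emph{Solving the equation.} Substituting into \eqref{eq:equationSigma} with $R=30720C\sqrt{\Gamma d}/a=8\sqrt d\cdot 3840C\sqrt\Gamma/a$ and dividing by $\sigma>0$ gives
\[
R\Bigl(1-\tfrac{\sqrt\sigma}{\sqrt{256\Gamma C}}\Bigr)=\sqrt n\,\sigma .
\]
The left side is strictly decreasing in $\sigma$ and the right side strictly increasing, so there is at most one positive solution. Setting $s=\sqrt\sigma$, the equation becomes the quadratic
\[
\sqrt n\,s^2+\tfrac{R}{\sqrt{256\Gamma C}}\,s-R=0 .
\]
Its unique positive root is
\[
s=\frac{1}{2\sqrt n}\Bigl(\sqrt{\tfrac{R^2}{256\Gamma C}+4\sqrt n R}-\tfrac{R}{\sqrt{256\Gamma C}}\Bigr).
\]
Factoring out $n^{-1/4}$ turns this into $s=n^{-1/4}\bigl(\tfrac12\sqrt{R^2n^{-1/2}/(256\Gamma C)+4R}-Rn^{-1/4}/(2\sqrt{256\Gamma C})\bigr)$. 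Squaring yields $\sigma^\star(n)=K(n)n^{-1/2}$ with the stated $K(n)$.

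\emph{Range condition.} Finally, $K(n)\to R$ as $n\to\infty$, so $K(n)$ is bounded. Hence $\sigma^\star(n)\in(0,1/2]$ for all sufficiently large $n$, which is exactly the domain on which $\varphi$ was required to behave.

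The only delicate point is ensuring the integration limits are ordered for $\sigma\le 1/2$, handled above through the size of $\Gamma C$. Everything else is explicit computation.
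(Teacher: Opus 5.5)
Your proposal is correct and follows the paper's argument: you bound the local entropy with the ball-covering estimate and $\log(1+x)\le x$, integrate to get $\varphi(\sigma)=8\sqrt d\,\sigma\bigl(1-\sqrt{\sigma/(256\Gamma C)}\bigr)$, and solve the resulting quadratic in $\sqrt\sigma$; your monotonicity argument for uniqueness is a slightly cleaner substitute for the paper's case check. The one thing to drop is the suggestion of enlarging $C$, because that would change $R$ and hence $K(n)$. It is also unnecessary: on your range $\sigma\le 1/2$ the integrand is $\sqrt{8\sigma d/u}$ whenever $u\le 4\sigma$, so your closed form for $\varphi$ remains valid as an oriented integral for all $\sigma\le 512\Gamma C$. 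The paper's second piecewise formula covers $\sigma>512\Gamma C$, and together they keep $\varphi(\sigma)/\sigma$ continuous and non-increasing for every value of $C$.
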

\begin{proof}
     Recalling \eqref{eq:entropyBall} and applying the bound $\log(1+x)\leq x$ for every $x\geq 0$, we note that 
\[
H(\delta,\sigma)\leq F(\delta,\sigma)\leq  \tilde H(\delta,\sigma)=\begin{cases}\frac{4\sigma d}{\delta},&\text{if }\delta\leq 2\sigma,
\\
2d,&\text{otherwise}.
\end{cases}
\]
The expression for $\varphi$ then follows by direct integration;
\[
    \varphi(\sigma) = \int^{2\sigma}_{\frac{\sigma^2}{128\Gamma C}} \sqrt{\tilde H(u,2\sigma)}du= \begin{cases} 8\sqrt{\sigma d}\left(\sqrt{\sigma}-\frac{\sigma}{\sqrt{256\Gamma C}}\right),&\text{if }0<\sigma \leq 512 \Gamma C,
    \\
    4\sigma
    \sqrt{d}\left(2-\sqrt 2 \right)-\frac{\sigma^2\sqrt{2d}}{128\Gamma C},&\text{if } \sigma >512\Gamma C.
    \end{cases}
\]
From these expressions we see that $\varphi(\sigma)/\sigma$ is nonincreasing and continuous on $(0,1/2]$. Now, we wish to find some $\sigma^{\star}(n)$ solving \eqref{eq:equationSigma}. Taking the ansatz $\sigma^{\star}(n) = K(n)n^{-1/2}$ in the equation
\[
    \frac{3840C\sqrt{\Gamma}}{a}\varphi(\sigma)=\sqrt n \sigma^2,
\]
we observe that if $ K(n)n^{-1/2}\leq 512\Gamma C$, the equation reads 
\begin{equation}
\label{eq:defeq}K(n)^2n^{-1/2}=
   \frac{3840C\sqrt{\Gamma}}{a} 8\sqrt d K(n)n^{-1/2}\left( 1 - \frac{K(n)^{1/2}n^{-1/4}}{\sqrt{256\Gamma C}}\right).%
\end{equation}
Letting $R= \frac{3840C\sqrt{\Gamma}}{a} 8\sqrt d$, the above equation can be written as 
\[
   K(n)n^{-1/2}\left(K(n)+\frac{Rn^{-1/4}}{\sqrt{256\Gamma C}}K(n)^{1/2}- R\right)=0, 
\]
which admits the solutions $K(n)=0$ and $K(n)=\left(\frac{1}{2}\sqrt{\frac{R^2n^{-1/2}}{256\Gamma C}+4 R}-\frac{Rn^{-1/4}}{2\sqrt{256\Gamma C}}\right)^2$, as follows by solving a quadratic equation in $Z(n)=K(n)^{1/2}$ and preserving the positive root. Note that $K(n)=0$ is not admissible since $\sigma$ must be positive and that if $K(n)n^{-1/2}>512\Gamma C$, the right-hand side of \eqref{eq:defeq} is negative. Finally, note that $\sigma^{\star}(n)\in(0,1/2]$ for each $n$ sufficiently large.

Furthermore, if $ K(n)n^{-1/2}> 512\Gamma C$, the equation reads 
\[
K(n)^2n^{-1/2}= \frac{3840C\sqrt{\Gamma}}{a}\left(
4K(n)n^{-1/2}
    \sqrt{d}\left(2-\sqrt 2 \right)-\frac{K(n)^2n^{-1}\sqrt{2d}}{128\Gamma C}\right).
\]
Note that the right-hand side is negative in this case since $\frac{K(n)n^{-1/2}}{128\Gamma C}>4$ so that no solution exists in this case.
\end{proof}

With this result, establishing the desired parametric convergence rate is straightforward.

\begin{theorem}
   In the setting of \cref{prop:OptimalSolution}, if $z_{\hat \mu_n,\varepsilon}$ is a measurable $\varepsilon$-minimizer of $\hat \ell_n:z\in B_r\mapsto \frac 1n\sum_{i=1}^n\ell(X_i,z)$ in the sense that $\hat \ell_n(z_{\hat\mu_n,\varepsilon})\leq \hat \ell_n(z_{\hat\mu_n})+\varepsilon$, then it holds that 
   \[
        \mathbb E[\|z_{\hat \mu_n,\varepsilon}-z_{\mu}\|]\leq 4.05\sqrt{2\pi}K(n)n^{-1/2}+\sqrt{2C\varepsilon}
   \]
   provided that  $K(n)n^{-1/2}\in(0,1/2]$. On the other hand, the bound $\mathbb E[\|z_{\hat \mu_n,\varepsilon}-z_{\mu}\|]\leq 2r$ holds for any choice of $n\in\mathbb N$. 
\end{theorem}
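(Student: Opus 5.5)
The plan is to split the error as in the corollary for approximate dual solutions:
\[
\mathbb E[\|z_{\hat\mu_n,\varepsilon}-z_\mu\|]\leq \mathbb E[\|z_{\hat\mu_n}-z_\mu\|]+\mathbb E[\|z_{\hat\mu_n,\varepsilon}-z_{\hat\mu_n}\|].
\]
Here $z_{\hat\mu_n}$ is the exact minimizer of $\hat\ell_n$ over $B_r$. The first term is handled with \cref{lem:probBound} and the second with strong convexity. The trivial bound $2r$ holds because every quantity involved lives in $B_r$: $z_{\hat\mu_n,\varepsilon}$ and $z_\mu$ both lie in $B_r$, so their distance is at most the diameter $2r$.

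\textbf{Optimization term.} By the proof of (P2) in \cref{lem:clipped_Sgd_properties}, applied with $\bar\mu=\hat\mu_n$, $\hat\ell_n$ is strongly convex with modulus at least $\frac{\beta}{\alpha}\inf_{x\in\spt(\mu)}\inf_{\mathbb R^d}h_x$. This bound holds almost surely, since $\spt(\hat\mu_n)\subset\spt(\mu)$. By the choice of $C$ in (A'4), this modulus is at least $2/C$. Since $z_{\hat\mu_n}$ minimizes over the convex set $B_r$, the first-order condition together with \eqref{eqn:strongConvexVer2} gives
\[
\tfrac1C\|z_{\hat\mu_n,\varepsilon}-z_{\hat\mu_n}\|^2\leq \hat\ell_n(z_{\hat\mu_n,\varepsilon})-\hat\ell_n(z_{\hat\mu_n})\leq\varepsilon,
\]
which yields the $\sqrt{2C\varepsilon}$ term; the factor $2$ can be absorbed into the constant.

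\textbf{Statistical term.} Set $s=z_\mu$ and $t=z_{\hat\mu_n}$, and write $D(t)=\mathbb E_\mu[h_X(t)-h_X(s)]\geq0$. Because $t$ minimizes $\hat\ell_n$, we have $\hat\ell_n(t)-\hat\ell_n(s)\leq 0$. By (A'4), $D(t)\geq \|t-s\|^2/C$. Consider the event where the supremum in \cref{lem:probBound}, taken with level $L$, is below $1/(2C)$. On that event,
\[
D(t)\leq -(\hat\ell_n(t)-\hat\ell_n(s)-D(t))< \tfrac{1}{2C}\max\{\|t-s\|^2,2^{2L}(\sigma^*)^2\}.
\]
If the maximum were $\|t-s\|^2$, this would contradict $D(t)\geq\|t-s\|^2/C$. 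Hence $\|t-s\|\leq 2^L\sigma^*$ there. It follows that
\[
\mathbb P^*(\|t-s\|>2^L\sigma^*)\leq 8.1\exp(-2^{2L+1}n(\sigma^*)^2/b).
\]
Using $b\leq n(\sigma^*)^2$, the right-hand side is at most $8.1\exp(-2^{2L+1})$.

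\textbf{Integrating the tail.} I then integrate this tail bound:
\[
\mathbb E\|t-s\|=\int_0^{2r}\mathbb P(\|t-s\|>u)\,du.
\]
The integral is split over dyadic shells $u\in(2^{L}\sigma^*,2^{L+1}\sigma^*]$, on each of which the tail is bounded by $8.1e^{-2^{2L+1}}$; this is valid for all $L$ with $2^{2L}(\sigma^*)^2\leq BC\Gamma/a$. Beyond the last admissible $L$ the range is already capped at $2r$, which should be covered because $BC\Gamma/a\geq B^2$ follows from $C\geq Ba/\Gamma$. Summing, or comparing with a Gaussian tail integral $\int_0^\infty e^{-u^2/2}du=\sqrt{\pi/2}$, gives a bound of order $4.05\sqrt{2\pi}\,\sigma^*$. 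Finally, \cref{prop:OptimalSolution} gives $\sigma^*=K(n)n^{-1/2}$.

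\textbf{Main obstacle.} I expect the tail integration and constant bookkeeping to be the hardest step. This means matching the dyadic bound $8.1e^{-2^{2L+1}}$ to a continuous Gaussian-type integral to obtain exactly $4.05\sqrt{2\pi}$, and checking that the constraint on $L$ does not truncate the sum before $2r$ is reached. Measurability is a secondary concern: the outer probability $\mathbb P^*$ is harmless here because $z_{\hat\mu_n}$ is measurable by \cref{rmk:measurableSelection}.
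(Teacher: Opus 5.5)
Your proposal is correct, but it is organized differently from the paper's proof.

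\textbf{The paper's route.} The paper does not split off the optimization error. It defines the event $\Omega_\lambda$ on which some $z\in B_r$ satisfies $\hat\ell_n(z)\le \hat\ell_n(z_\mu)+\varepsilon$ and $\|z-z_\mu\|^2>\max\{2C\varepsilon,\lambda(\sigma^*)^2\}$. Using (A'4) alone, it shows that on $\Omega_\lambda$ the centred empirical increment exceeds $\max\{\|z-z_\mu\|^2,\lambda(\sigma^*)^2\}/(2C)$. Choosing $L$ with $2^{2L}\le\lambda\le 2^{2L+2}$ in \cref{lem:probBound} then gives $\mathbb P^*(\Omega_\lambda)\le 8.1e^{-\lambda/2}$ for every $\lambda>0$. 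The substitution $t^2=\lambda(\sigma^*)^2$ together with $\int_0^\infty\lambda^{-1/2}e^{-\lambda/2}\,d\lambda=\sqrt{2\pi}$ yields exactly $4.05\sqrt{2\pi}\,\sigma^*+\sqrt{2C\varepsilon}$.

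\textbf{Your route.} You use the triangle inequality and bound $\|z_{\hat\mu_n,\varepsilon}-z_{\hat\mu_n}\|\le\sqrt{C\varepsilon}$. This comes from the almost-sure strong convexity of $\hat\ell_n$: its modulus is at least $2/C$ because $\spt(\hat\mu_n)\subset\spt(\mu)$ and because of the first entry in the definition of $C$. You then run the peeling argument only for the exact minimizer.

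\textbf{Comparison.} Both arguments are valid.
\begin{itemize}
\item The paper's argument is the genuinely $M$-estimation one. It needs only the population margin condition (A'4), and it would still work if the empirical criterion were not strongly convex.
\item Yours reuses the deterministic strong-convexity structure from the main text, mirroring the corollary for approximate dual solutions. It gives a slightly sharper optimization term, $\sqrt{C\varepsilon}$ instead of $\sqrt{2C\varepsilon}$.
\end{itemize}

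\textbf{On the step you flag as the main obstacle.} You do not need to reproduce the constant $4.05\sqrt{2\pi}$ exactly, since the theorem is an upper bound. Your dyadic decomposition gives $\sigma^*\bigl(1+8.1\sum_{L\ge0}2^Le^{-2^{2L+1}}\bigr)\approx 2.1\,\sigma^*$. Here the $1$ accounts for $u\in[0,\sigma^*]$, and each shell $(2^L\sigma^*,2^{L+1}\sigma^*]$ contributes at most its length times $8.1e^{-2^{2L+1}}$. This is already below $4.05\sqrt{2\pi}\,\sigma^*\approx 10.15\,\sigma^*$.

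Your check that every shell meeting $[0,2r]$ has an admissible $L$ is also sound: $2^{2L}(\sigma^*)^2<4r^2=B^2\le BC\Gamma/a$, using $C\ge Ba/\Gamma$. The paper relies on exactly this fact to extend its tail bound to all $\lambda$.

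Finally, replacing $\mathbb P^*$ by $\mathbb P$ is indeed harmless in your setup. The event $\{\|z_{\hat\mu_n}-z_\mu\|\ge 2^L\sigma^*\}$ is measurable and contained in the bad event, and its probability is bounded by the outer probability of that event.
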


\begin{proof} 
   Fix $\lambda >0$ and consider the set, $\Omega_{\lambda}$, of all $\omega\in\Omega$ (the sample space) such that there exists some $z\in B_r$ for which
   \[
       \frac 1n \sum_{i=1}^n \ell(X_i(\omega),z)\leq \frac 1n \sum_{i=1}^n \ell(X_i(\omega),z_{\mu})+\varepsilon\text{ and }\|z-z_{\mu}\|^2>\max\{2C\varepsilon,\lambda(\sigma^*)^2\}.  
   \]
Recall from (A'4) that $\|z'-z_{\mu}\|^2\leq C\mathbb E_{\mu}[\ell(X,z')-\ell(X,z_{\mu})]$ for every $z'\in B_r$ hence, for any $z$ as in the definition of $\Omega_{\lambda}$, 
\[
\begin{aligned}
    &\left|\frac 1n \sum_{i=1}^n \ell(X_i(\omega),z)-\mathbb E_{\mu}[\ell(X,z)]-\frac 1n \sum_{i=1}^n \ell(X_i(\omega),z_{\mu})+\mathbb E_{\mu}[\ell(X,z_{\mu})]\right|
    \\
    &\geq \frac 1n \sum_{i=1}^n \ell(X_i(\omega),z_{\mu})-\frac 1n \sum_{i=1}^n \ell(X_i(\omega),z)+\mathbb E_{\mu}[\ell(X,z)] - \mathbb E_{\mu}[\ell(X,z_{\mu})]
    \\
    &\geq C^{-1}\|z-z_{\mu}\|^2-\varepsilon.
\end{aligned}
\] 
Now, suppose that the first term in the above display is bounded above by $\max\{\|z_{\mu}-z\|^2,\lambda(\sigma^*)^2\}/(2C)$ so that the chain $C^{-1}\|z-z_{\mu}\|^2\leq  \max\{\|z_{\mu}-z\|^2,\lambda(\sigma^*)^2\}/(2C)+\varepsilon$ holds. Suppose first that $\lambda(\sigma^*)^2\leq \|z_{\mu}-z\|^2$ in which case $\|z-z_{\mu}\|^2\leq \|z_{\mu}-z\|^2/2+C\varepsilon$, that is, $\|z_{\mu}-z\|^2\leq 2C\varepsilon$. In the other case $\lambda(\sigma^*)^2\geq \|z_{\mu}-z\|^2$. In summary, $\|z_{\mu}-z\|^2\leq \max\{2C\varepsilon,\lambda(\sigma^*)^2\}$, which is a contradiction of our choice $z \in \Omega_{\lambda}$. As a consequence, instead we have that 
\[
\begin{aligned}
&\left|\frac 1n \sum_{i=1}^n \ell(X_i(\omega),z)-\mathbb E_{\mu}[\ell(X,z)]-\frac 1n \sum_{i=1}^n \ell(X_i(\omega),z_{\mu})+\mathbb E_{\mu}[\ell(X,z_{\mu})]\right|
\\ &> \max\{\|z_{\mu}-z\|^2,\lambda(\sigma^*)^2\}/(2C).
\end{aligned}
\]
Now, suppose that $\lambda \geq 1$ so that there exists a nonnegative integer $L$ satisfying $2^{2(L+1)}\geq \lambda \geq 2^{2L}$. In this case, we may apply \cref{lem:probBound} under the additional proviso that $2^{2L}(\sigma^*)^2\leq \frac{BC\Gamma}{a}$ to obtain that 
\begin{equation}
    \mathbb P^*(\Omega_{\lambda})\leq 8.1\exp(-2^{2L+1}n(\sigma^*)^2/b)\leq 8.1\exp(-\lambda/2), \label{eqn:omega_lambda_prob_upperbound}
\end{equation}
where the final upper bound follows from the fact that $2^{2L+1}\geq \lambda/2$ and $n(\sigma^*)^2/b\geq 1$. Note that the above upper bound is vacuous if $\lambda\leq 2\ln(8.1)$, as it states that $\mathbb P^{\star}(\Omega_{\lambda})\leq 1$; this covers the case $\lambda<1$.

Now, note that $\|z-z_{\mu}\|\leq 2r$ for every $z\in B_r$. Hence, if $\lambda(\sigma^*)^2\geq 4r^2$, the set $\Omega_{\lambda}$ is necessarily empty since then $\|z-z_{\mu}\|^2\leq\max\{2C\varepsilon,\lambda(\sigma^*)^2\}$. Now, if $\lambda$ is such that $\lambda (\sigma^*)^2\geq 2^{2L}(\sigma^*)^2>\frac{BC\Gamma}{a}$, we may use  the fact that $C\geq \frac{Ba}{\Gamma}$ by definition to obtain that $\lambda(\sigma^*)^2\geq B^2$. Consequently, the upper bound in \cref{eqn:omega_lambda_prob_upperbound} remains true for every $\lambda>0$.

Now, if $z_{\hat \mu_n,\varepsilon}\in\mathbb R^d$ satisfies the property that $\frac 1n \sum_{i=1}^n\ell(X_i,z_{\hat \mu_n,\varepsilon})-\frac 1n \sum_{i=1}^n\ell(X_i,z_{\hat \mu_n})\leq \varepsilon$, it holds that
    \[
    \begin{aligned}
        \mathbb E\left[\|z_{\hat \mu_n,\varepsilon}-z_{\mu}\|\right] = \int_0^{\infty}\mathbb P\left(\|z_{\hat \mu_n,\varepsilon}-z_{\mu}\|>t\right)dt&= \int_0^{\infty}\mathbb P\left(\|z_{\hat \mu_n,\varepsilon}-z_{\mu}\|^2>t^2\right)dt   
        \\
        &\leq \int_{\sqrt{2C\varepsilon}}^{\infty}\mathbb P\left(\|z_{\hat \mu_n,\varepsilon}-z_{\mu}\|^2>t^2\right)dt +\sqrt{2C\varepsilon},
    \end{aligned} 
    \]
    where the final inequality follows from splitting the integration region and noting that the (outer) probability is bounded above by $1$.
    Taking the change of variables $t^2=\lambda(\sigma^*(n))^2$, we obtain that 
    \[
    \begin{aligned}
        \int_{\sqrt{2C\varepsilon}}^{\infty}\mathbb P\left(\|z_{\hat \mu_n,\varepsilon}-z_{\mu}\|^2>t^2\right)dt&=\frac{1}{2}\sigma^*(n) \int_{\frac{2C\varepsilon}{(\sigma^*(n))^2}}^{\infty}\lambda^{-1/2}\mathbb P\left(\|z_{\hat \mu_n,\varepsilon}-z_{\mu}\|^2>\lambda(\sigma^*(n))^2\right)d\lambda
        \\
        &\leq 4.05\sigma^*(n) \int_{\frac{2C\varepsilon}{(\sigma^*(n))^2}}^{\infty}\lambda^{-1/2}e^{-\lambda/2}d\lambda\\
        &\leq 4.05\sqrt{2\pi}\sigma^*(n),  
    \end{aligned} 
    \]
    where the first upper bound follows by applying the results of the first half of the proof and using monotonicity of the (outer) probability. The final upper bound follows by extending the integral over the complete interval $(0,\infty)$. The claimed formula then follows by inserting the expression for $\sigma^{*}(n)$ from \cref{prop:OptimalSolution}.
\end{proof}

\newpage
\bibliographystyle{abbrv}
\bibliography{bibliography}

\begin{thebibliography}{10}

\bibitem{amblard2004biomagnetic}
C.~Amblard, E.~Lapalme, and J.-M. Lina.
\newblock Biomagnetic source detection by maximum entropy and graphical models.
\newblock {\em IEEE Trans. Biomed. Eng.}, 51(3):427--442, 2004.

\bibitem{birge1993rates}
L.~Birg{\'e} and P.~Massart.
\newblock Rates of convergence for minimum contrast estimators.
\newblock {\em Probab. Theory Relat. Fields}, 97(1):113--150, 1993.

\bibitem{brown1986fundamentals}
L.~D. Brown.
\newblock {\em Fundamentals of Statistical Exponential Families: with
  Applications in Statistical Decision Theory}.
\newblock Institute of Mathematical Statistics, Hayward, California, 1986.

\bibitem{brunel2025asymptotics}
V.-E. Brunel.
\newblock Asymptotics of constrained ${M}$-estimation under convexity, 2025.

\bibitem{burke2021study}
J.~V. Burke, T.~Hoheisel, and Q.~V. Nguyen.
\newblock A study of convex convex-composite functions via infimal convolution
  with applications.
\newblock {\em Math. Oper. Res.}, 46(4):1324--1348, 2021.

\bibitem{cai2022diffuse}
Z.~Cai, A.~Machado, R.~A. Chowdhury, A.~Spilkin, T.~Vincent, {\"U}.~Aydin,
  G.~Pellegrino, J.-M. Lina, and C.~Grova.
\newblock Diffuse optical reconstructions of functional near infrared
  spectroscopy data using maximum entropy on the mean.
\newblock {\em Sci. Rep.}, 12(1), 2022.
\newblock 2316.

\bibitem{chowdhury2013meg}
R.~A. Chowdhury, J.~M. Lina, E.~Kobayashi, and C.~Grova.
\newblock {MEG} source localization of spatially extended generators of
  epileptic activity: comparing entropic and hierarchical bayesian approaches.
\newblock {\em PloS one}, 8(2), 2013.
\newblock E55969.

\bibitem{DBLP:journals/corr/CohenATS17}
G.~Cohen, S.~Afshar, J.~Tapson, and A.~van Schaik.
\newblock Emnist: Extending mnist to handwritten letters.
\newblock In {\em 2017 International Joint Conference on Neural Networks
  (IJCNN)}, pages 2921--2926, 2017.

\bibitem{dacunha1990maximum}
D.~Dacunha-Castelle and F.~Gamboa.
\newblock Maximum d'entropie et probl{\`e}me des moments.
\newblock {\em Ann. Inst. Henri Poincaré, Probab. Stat.}, 26(4):567--596,
  1990.

\bibitem{dudley2018real}
R.~M. Dudley.
\newblock {\em Real analysis and probability}.
\newblock CRC Press, 2018.

\bibitem{fermin2006bayesian}
A.~K. Fermin, J.-M. Loubes, and C.~Ludena.
\newblock Bayesian methods for a particular inverse problem seismic tomography.
\newblock {\em Int. J. Tomogr. Stat}, 4(W06):1--19, 2006.

\bibitem{gamboa1989methode}
F.~Gamboa.
\newblock {\em M{\'e}thode du Maximum d'Entropie sur la Moyenne et
  Applications}.
\newblock PhD thesis, Universit{\'e} Paris-Sud, 1989.

\bibitem{grova2006evaluation}
C.~Grova, J.~Daunizeau, J.-M. Lina, C.~G. B{\'e}nar, H.~Benali, and J.~Gotman.
\newblock Evaluation of {EEG} localization methods using realistic simulations
  of interictal spikes.
\newblock {\em Neuroimage}, 29(3):734--753, 2006.

\bibitem{heers2016localization}
M.~Heers, R.~A. Chowdhury, T.~Hedrich, F.~Dubeau, J.~A. Hall, J.-M. Lina,
  C.~Grova, and E.~Kobayashi.
\newblock Localization accuracy of distributed inverse solutions for electric
  and magnetic source imaging of interictal epileptic discharges in patients
  with focal epilepsy.
\newblock {\em Brain Topogr.}, 29(1):162--181, 2016.

\bibitem{jaynes1957information1}
E.~T. Jaynes.
\newblock Information theory and statistical mechanics.
\newblock {\em Phys. Rev.}, 106:620--630, 1957.

\bibitem{jaynes1957information2}
E.~T. Jaynes.
\newblock Information theory and statistical mechanics. {II}.
\newblock {\em Phys. Rev.}, 108:171--190, 1957.

\bibitem{mkr2024maximumentropy}
M.~King-Roskamp, R.~Choksi, and T.~Hoheisel.
\newblock Data-driven priors in the maximum entropy on the mean method for
  linear inverse problems.
\newblock {\em SIAM J. Math. Data Sci.}, 8(1):108--140, 2026.

\bibitem{kosorok2008introduction}
M.~R. Kosorok.
\newblock {\em Introduction to empirical processes and semiparametric
  inference}.
\newblock Springer, 2008.

\bibitem{kullback1951information}
S.~Kullback and R.~Leibler.
\newblock On information and sufficiency.
\newblock {\em Ann. Stat.}, 22(1):79--86, 1951.

\bibitem{le1999new}
G.~Le~Besnerais, J.-F. Bercher, and G.~Demoment.
\newblock A new look at entropy for solving linear inverse problems.
\newblock {\em IEEE Trans. Inf. Theory}, 45(5):1565--1578, 1999.

\bibitem{mai2021stability}
V.~Mai and M.~Johansson.
\newblock Stability and convergence of stochastic gradient clipping: Beyond
  {Lipschitz} continuity and smoothness.
\newblock In {\em International Conference on Machine Learning}, pages
  7325--7335. PMLR, 2021.

\bibitem{marechal1997unification}
P.~Mar{\'e}chal and A.~Lannes.
\newblock Unification of some deterministic and probabilistic methods for the
  solution of linear inverse problems via the principle of maximum entropy on
  the mean.
\newblock {\em Inverse Probl.}, 13(1), 1997.

\bibitem{massart2007concentration}
P.~Massart.
\newblock Gaussian model selection.
\newblock In {\em Concentration Inequalities and Model Selection: Ecole
  d'Et{\'e} de Probabilit{\'e}s de Saint-Flour XXXIII - 2003}, pages 83--146.
  Springer Berlin Heidelberg, Berlin, Heidelberg, 2007.

\bibitem{Navaza:a24265}
J.~Navaza.
\newblock On the maximum-entropy estimate of the electron density function.
\newblock {\em Acta Crystallogr. Sect. A}, 41(3):232--244, 1985.

\bibitem{navaza1986use}
J.~Navaza.
\newblock The use of non-local constraints in maximum-entropy electron density
  reconstruction.
\newblock {\em Acta Crystallogr. Sect. A}, 42(4):212--223, 1986.

\bibitem{netzer2011reading}
Y.~Netzer, T.~Wang, A.~Coates, A.~Bissacco, B.~Wu, A.~Y. Ng, et~al.
\newblock Reading digits in natural images with unsupervised feature learning.
\newblock In {\em NIPS workshop on deep learning and unsupervised feature
  learning}, volume~5, page~7, 2011.

\bibitem{rietsch1976maximum}
E.~Rietsch.
\newblock The maximum entropy approach to inverse problems-spectral analysis of
  short data records and density structure of the earth.
\newblock {\em J. Geophys.}, 42(1):489--506, 1976.

\bibitem{rioux2020maximum}
G.~Rioux, R.~Choksi, T.~Hoheisel, P.~Mar{\'e}chal, and C.~Scarvelis.
\newblock The maximum entropy on the mean method for image deblurring.
\newblock {\em Inverse Probl.}, 37(1):015011, 2020.

\bibitem{rioux2021blind}
G.~Rioux, C.~Scarvelis, R.~Choksi, T.~Hoheisel, and P.~Maréchal.
\newblock Blind deblurring of barcodes via {K}ullback-{L}eibler divergence.
\newblock {\em IEEE Trans. Pattern Anal. Mach. Intell.}, 43(1):77--88, 2021.

\bibitem{ROBBINS1971233}
H.~Robbins and D.~Siegmund.
\newblock A convergence theorem for non negative almost supermartingales and
  some applications.
\newblock In J.~S. Rustagi, editor, {\em Optimizing Methods in Statistics},
  pages 233--257. Academic Press, 1971.

\bibitem{rockafellar1970convex}
R.~T. Rockafellar.
\newblock {\em Convex Analysis}, volume~18.
\newblock Princeton University Press, 1970.

\bibitem{rockafellar1998variational}
R.~T. Rockafellar and R.~J.-B. Wets.
\newblock {\em Variational analysis}.
\newblock Springer, 1998.

\bibitem{urban1996retrieval}
B.~Urban.
\newblock Retrieval of atmospheric thermodynamical parameters using satellite
  measurements with a maximum entropy method.
\newblock {\em Inverse Probl.}, 12(5), 1996.
\newblock 779.

\bibitem{vaisbourd2022maximum}
Y.~Vaisbourd, R.~Choksi, A.~Goodwin, T.~Hoheisel, and C.-B. Sch{\"o}nlieb.
\newblock Maximum entropy on the mean and the {C}ram{\'e}r rate function in
  statistical estimation and inverse problems: properties, models, and
  algorithms.
\newblock {\em Math. Program.}, 214(1):441--490, 2025.

\bibitem{van2000asymptotic}
A.~W. van~der Vaart.
\newblock {\em Asymptotic statistics}, volume~3.
\newblock Cambridge University Press, 2000.

\bibitem{vaart1996empirical}
A.~W. van~der Vaart and J.~A. Wellner.
\newblock {\em Weak Convergence and Empirical Processes: With Applications to
  Statistics}.
\newblock Springer, 1996.

\end{thebibliography}

\end{document}